\newcommand{\TITLE}{Optimal Sobolev inequalities in the hyperbolic space}
\numberwithin{equation}{section}
\theoremstyle{plain}
\newtheorem{theorem}{Theorem}[section]
\newtheorem{proposition}{Proposition}[section]
\theoremstyle{definition}
\newtheorem{remark}{Remark}[section]
\let\c@corollary=\c@theorem
\let\c@proposition=\c@theorem
\let\c@remark=\c@theorem
\title{\TITLE}
\author{Zden\v ek Mihula}
\address{Zden\v ek Mihula, Czech Technical University in Prague, Faculty of Electrical Engineering, Department of Mathematics, Technick\'a~2, 166~27 Praha~6, Czech Republic}
\email{mihulzde@fel.cvut.cz}
\urladdr{\href{https://orcid.org/0000-0001-6962-7635}{0000-0001-6962-7635}}
\newcommand{\R}{\mathbb{R}}
\newcommand{\N}{\mathbb{N}}
\newcommand{\Hn}{\mathbb{H}^n}
\newcommand{\rn}{\R^n}
\newcommand{\B}{\mathbb{B}}
\DeclareMathOperator{\distH}{dist}
\DeclareMathOperator{\arctanh}{arctanh}
\newcommand{\gradH}[1][{}]{\operatorname{\nabla}_g^{#1}}
\newcommand{\lapH}[1][{}]{\operatorname{\Delta}_g^{#1}}
\newcommand{\Sm}[1][{m}]{S_{#1}}
\newcommand{\Tm}[1][{m}]{T_{#1}}
\newcommand{\dV}{V_g}
\newcommand{\absH}[1]{|#1|_g}
\newcommand{\dx}{{\fam0 d}}
\renewcommand{\d}[1]{\,\dx #1}
\newcommand{\dover}[1]{\,\frac{\dx #1}{#1}}
\newcommand{\M}{\mathfrak{M}}
\newcommand{\Mpl}{\M^+}
\newcommand{\A}{\mathbb{A}}
\DeclareMathOperator*{\esssup}{ess\,sup}
    \def\crefname#1#2#3{}
    \def\cref#1{\{\getrefs#1,\relax\}}
    \def\getrefs#1,#2\relax{%
        \ref{#1}%
        \ifx\relax#2\relax\else
            , \getrefs#2\relax
        \fi
    }
\begin{document}
\setcitestyle{numbers}
\bibliographystyle{plainnat}

\subjclass[2020]{26D10, 46E30, 46E35} 
\keywords{Sobolev inequalities, Hyperbolic space, optimal spaces, Laplace-Beltrami operator, rearrangement-invariant spaces}
\thanks{This research was partly supported by grant no.~23-04720S of the Czech Science Foundation and by project OPVVV CAAS CZ.02.1.01/0.0/0.0/16\_019/0000778}

\begin{abstract}
We find the optimal function norm on the left-hand side of the $m$th order Sobolev type inequality $\|u\|_{Y(\Hn)} \leq C \|\gradH[m] u\|_{X(\Hn)}$ in the $n$-dimensional hyperbolic space $\Hn$, $1\leq m < n$. The optimal function norm in the inequality among all rearrangement\hyp{}invariant function norms is completely characterized. A variety of concrete examples of optimal function norms is provided. The examples include delicate limiting cases and, especially when $m\geq3$, seem to provide new, improved inequalities in these limiting cases. 
\end{abstract}

\maketitle

\section{Introduction}
Sobolev type inequalities have indisputably played an essential role in mathematics for decades and have been studied in diverse settings and from various different perspectives. In this paper, we will study a (higher order) Sobolev type inequality in the Poincar\'e ball model of the $n$-dimensional ($n\geq2$) hyperbolic space $\Hn$. Our main focus will be on the optimality of function norms in the inequality. More precisely, we will consider the Sobolev type inequality
\begin{equation}\label{intro:Sob_ineq}
\|u\|_{Y(\Hn)} \leq C \|\gradH[m] u\|_{X(\Hn)} \quad \text{for every $u\in V_0^m X(\Hn)$},
\end{equation}
where $1\leq m < n$, $\|\cdot\|_{Y(\Hn)}$ and $\|\cdot\|_{X(\Hn)}$ are rearrangement\hyp{}invariant function norms (such as the Lebesgue, Lorentz, or Orlicz ones), and $V_0^m X(\Hn)$ is a Sobolev type space of functions in $\Hn$ whose $m$th order hyperbolic gradient $\gradH[m]$, defined as
\begin{equation}\label{intro:mth_order_gradient}
\gradH[m] = 
\begin{cases}
\lapH[\frac{m}{2}]\quad&\text{if $m$ is even},\\
\gradH\lapH[\lfloor \frac{m}{2} \rfloor]\quad&\text{if $m$ is odd},
\end{cases}
\end{equation}
belongs to $X(\Hn)$, and that together with their lower order derivatives vanish at infinity (see \cref{sec:prel} for precise definitions). Here $\gradH$ is the hyperbolic gradient and $\lapH$ is the Laplace--Beltrami operator in $\Hn$, which can be expressed at $x\in\Hn$ in terms of their Euclidean counterparts as
\begin{align*}
\gradH &= \left(\frac{1 - |x|^2}{2}\right)^2 \nabla\\
\intertext{and}
\lapH &= \left(\frac{1 - |x|^2}{2}\right)^2 \Delta + (n - 2)\frac{1 - |x|^2}{2}\sum_{j = 1}^n x_j\frac{\partial}{\partial x_j}.
\end{align*}

The main results of this paper are contained in \cref{sec:main}, and they can be described as follows. Given $X(\Hn)$, we will characterize the optimal (i.e., the smallest) rearrangement\hyp{}invariant function space $Y(\Hn)$ such that \eqref{intro:Sob_ineq} is valid. Noteworthily, the most general theorem in that regard (\cref{thm:optimal_target_general_theorem}) does not impose any unnecessary assumptions on $X(\Hn)$; in other words, it completely characterizes the optimal (i.e., the strongest) rearrangement\hyp{}invariant function norm on the left-hand side of \eqref{intro:Sob_ineq}. When $X(\Hn)$ is not, loosely speaking, too close to $L^1(\Hn)$ or $L^\infty(\Hn)$, we can considerably simplify the general description of the optimal function space (\cref{thm:optimal_target_general_theorem_simplified_M_bounded_X,thm:optimal_target_general_theorem_simplified_M_bounded_Xasoc,thm:optimal_target_general_theorem_simplified_M_bounded_on_both_X_and_Xasoc}). Much as general theorems are important, concrete examples are often important just as much, if not more. Therefore, we also provide a lot of concrete examples (\cref{thm:examples_LZ}) of optimal rearrangement\hyp{}invariant function norms in \eqref{intro:Sob_ineq}. In the examples, $X(\Hn)$ is from the class of so-called Lorentz--Zygmund spaces, which appears to be a reasonable trade-off between simplicity and generality. Although the class of Lorentz--Zygmund spaces is rich enough to contain not only the Lebesgue and Lorentz spaces but also Orlicz spaces of logarithmic and exponential types, it is still not overly complicated. Furthemore, we compare the optimal rearrangement\hyp{}invariant function norm in \eqref{intro:Sob_ineq} with that in the Euclidean counterpart of \eqref{intro:Sob_ineq} (\cref{thm:optimal_target_eucl_and_supercritical}\ref{thm:optimal_target_eucl_and_supercritical_subcritical_item} and \cref{rem:optimal_target_eucl_and_supercritical}). Finally, we also characterize when the optimal function space in \eqref{intro:Sob_ineq} is contained in $L^\infty(\Hn)$ (\cref{thm:optimal_target_eucl_and_supercritical}\ref{thm:optimal_target_eucl_and_supercritical_supercritical_item}).

Loosely speaking, the most interesting things happen when $X(\Hn)$ is ``close to'' $L^1(\Hn)$, $L^\frac{n}{m}(\Hn)$, or $L^\infty(\Hn)$ (even more so when $m\geq3$). We illustrate it here with three particular examples of limiting higher order Sobolev type inequalities (see \cref{thm:examples_LZ} for more), which we will obtain in this paper and which, to the best of the author's knowledge, are completely new and missing in the literature; moreover, the rearrangement\hyp{}invariant function norms on the left-hand sides are optimal and cannot be replaced by essentially stronger rearrangement\hyp{}invariant function norms. First, for every $u\in V_0^m L^1(\Hn)$, we have
\begin{equation*}
\int_0^1 t^{\frac{n - m}{n} - 1} u^*(t) \d{t} + \sup_{t\in[1, \infty)} \frac{t u^{**}(t)}{(1 + \log t)^{\frac{m - 1}{2}}} \leq C \|\gradH[m] u\|_{L^1(\Hn)}
\end{equation*}
when $m\geq3$ is odd, and
\begin{equation*}
\sup_{t\in(0, 1)} t^{\frac{n - m}{n}} u^{**}(t) + \sup_{t\in[1, \infty)} \frac{t u^{**}(t)}{(1 + \log t)^{\frac{m}{2}}} \leq C \|\gradH[m] u\|_{L^1(\Hn)}
\end{equation*}
when $m$ is even. Here $u^*$ and $u^{**}$ stand for the nonincreasing and maximal nonincreasing rearrangements of $u$, respectively (see \cref{sec:prel} for precise definitions). Second, for every $u\in V_0^m L^\frac{n}{m}(\Hn)$, we have
\begin{equation*}
\Bigg( \int_0^1 \Big(\frac{u^{**}(t)}{1 - \log(t)}\Big)^{\frac{n}{m}}\dover{t} + \int_1^\infty u^{**}(t)^{\frac{n}{m}} \d{t} \Bigg)^{\frac{m}{n}}\leq C \|\gradH[m] u\|_{L^\frac{n}{m}(\Hn)}.
\end{equation*}
From the point of view of function norms, this inequality improves Moser--Trudinger inequalities in $\Hn$ (see \cite{NN:20} and references therein) in a similar way that Br\'ezis-- Wainger's and Hansson's limiting Sobolev inequalities (\cite{BW:80, H:79}) improve Moser--Trudinger inequalities in $\rn$. Third, when $X(\Hn) = L^\infty(\Hn)$, there is no rearrangement\hyp{}invariant function space $Y(\Hn)$ such that \eqref{intro:Sob_ineq} is valid. Nevertheless, we have the following. Let $\alpha_0\leq0$ and $\alpha_\infty > \lceil m/2 \rceil$. For every $u\in V_0^m L^{\infty, \infty; [\alpha_0, \alpha_\infty]}(\Hn)$, we have
\begin{equation*}
\|u\|_{L^\infty(\Hn)} + \sup_{t\in[1, \infty)} u^{**}(t) (1 + \log t)^{\alpha_\infty - \lceil \frac{m}{2} \rceil} \leq C \|\gradH[m] u\|_{L^{(\infty, \infty; [\alpha_0, \alpha_\infty])}(\Hn)},
\end{equation*}
where the norm of $\gradH[m] u$ on the right-hand side is equal to
\begin{equation*}
\sup_{t\in(0, 1)}(\gradH[m] u)^{**}(t)(1 - \log t)^{\alpha_0}+ \sup_{t\in[1, \infty)}(\gradH[m] u)^{**}(t)(1 + \log t)^{\alpha_\infty};
\end{equation*}
moreover, the restriction on the parameters $\alpha_0$ and $\alpha_\infty$ is unavoidable (see \cref{rem:examples_LZ}\ref{rem:examples_LZ_p=q=infty_nonexistence_of_target_item}). The function space $L^{\infty, \infty; [\alpha_0, \alpha_\infty]}(\Hn)$ is an example of a Lorentz--Zygmund space that is neither a Lebesgue space nor a Lorentz space. However, it is equivalent to the exponential type Orlicz space generated by a Young function that is equivalent to the function $t\mapsto \exp(-t^{-1/\alpha_\infty})$ near $0$ and to the function $t\mapsto \exp(t^{-1/\alpha_0})$ near infinity (when $\alpha_0 = 0$, the Young function is constantly equal to $\infty$ near infinity).

To achieve our goal, not only will we make use of rearrangement techniques, but we will also need to carefully interconnect them with contemporary as well as classical theory of function spaces and some aspects of interpolation theory. We will also need to put a considerable amount of effort into handling various compositions of different Hardy type operators with kernels. Rearrangement techniques were recently successfully used in connection with Sobolev type inequalities in $\Hn$ in \cite{NN:20, N:18, N:20, N:21} (see also references therein), where the main focus was on optimal constants. In this paper, our focus is on optimality of function norms (not of constants). We will obtain very general Sobolev type inequalities in $\Hn$ without imposing various restrictions on the function norms. This generality allows us to handle even delicate limiting situations, of which we have already seen some examples. The tradeoff is, however, that we will obtain our Sobolev inequalities in $\Hn$ with some constants, not comparing them with the optimal ones. The aforementioned papers contain very nice introductory sections on Sobolev type inequalities in the hyperbolic space, and the interested reader is referred to them. Whereas the study of optimal function norms in Sobolev type inequalities in $\Hn$ is new, optimal function norms in Sobolev type inequalities in $\rn$ have been studied for quite a long time (e.g., \cite{ACPS:18, BC:21, CP:16, CKP:08, CPS:15, CPS:20, KP:06, M:21, M:21b} and references therein). As far as the higher order ($m\geq3$) version of \eqref{intro:Sob_ineq} is concerned, we will obtain the optimal function norm in the Sobolev inequality by iterating its optimal lower order versions and by proving that the optimality has been  preserved. The innovative idea of obtaining higher order Sobolev type inequalities by iteration of optimal lower order ones appeared in \cite{CPS:15}, where higher order Sobolev type inequalities on domains in $\rn$ equipped with a finite absolutely continuous measure related to the isoperimetric profile of the domain were thoroughly studied. In this paper, significant difficulties, which we will encounter and will have to deal with, stem from a combination of two facts: The measure of $\Hn$ is infinite, and the differential operator considered in this paper leads to iteration of Hardy operators of different types. In particular, this combination means that we will have to get under our control the behavior of a sum of operators, neither of which rules over the other in general (see \cref{rem:neither_Hardy_is_better_in_general}). Moreover, the infinite measure also means that different function spaces are far less often nested, which makes their intersections and sums considerably more complicated.

We conclude this introductory section by briefly describing the structure of this paper. \cref{sec:prel} contains precise definitions and preliminaries, aimed at making this paper self-contained to a reasonable extent. \cref{sec:main} contains the main result of this paper, which are later proved in \cref{sec:proofs_main}. \cref{sec:aux_results} contains results that are auxiliary in nature for us, equipping and preparing us for proofs of the main results, but readers interested in the theory of rearrangement\hyp{}invariant function spaces and of operators acting on them may find them of independent interest.

\section{Preliminaries}\label{sec:prel}

\subsection{The hyperbolic space}
Throughout the paper, we assume that $n\in\N$, $n\geq2$, and that $\Hn$ stands for the Poincar\'e ball model of the $n$-dimensional hyperbolic space; that is, $\Hn$ is the unit ball in $\rn$ endowed with the Riemannian metric $g_x$ that is at point $x\in\rn$, $|x| < 1$, defined as
\begin{equation*}
g_x(\cdot, \cdot) = \Big( \frac{2}{1 - |x|^2} \Big)^2\langle \cdot, \cdot \rangle.
\end{equation*}
Here $|\cdot|$ and $\langle \cdot, \cdot \rangle$ stand for the Euclidean norm and inner product, respectively. The hyperbolic volume element $\d{\dV}(x)$ is
\begin{equation*}
\d{\dV(x)} = \Big( \frac{2}{1 - |x|^2} \Big)^n \d{x}.
\end{equation*}

We denote by $\distH(x)$ the hyperbolic distance of the point $x\in\Hn$ from the origin; that is, (e.g., \cite[p.~185]{L:18})
\begin{equation*}
\distH(x) = \log\frac{1 + |x|}{1 - |x|} = 2 \arctanh|x|.
\end{equation*}
Furthermore, we denote by $\B_r$ the (open) hyperbolic ball centered at the origin with a radius $r$, that is,
\begin{equation*}
\B_r = \{x\in\Hn\colon \distH(x) < r\},
\end{equation*}
and by $V(r)$ the hyperbolic volume of $\B_r$, that is,
\begin{equation}\label{prel:volume_of_hyperbolic_ball}
	V(r) = n\omega_n\int_0^r (\sinh t)^{n-1}\d{t}.
\end{equation}
Here $\omega_n$ is the (Euclidean) volume of the unit ball in $\rn$. It is easy to see that $V\colon[0, \infty) \to [0,\infty)$ is a $C^\infty$ increasing bijection, whose derivative is positive in $(0, \infty)$. It follows that its inverse function, which we shall denote by $\varrho$, belongs to $\mathcal C^\infty(0, \infty)$. A set $M\subseteq \Hn$ is bounded if $M \subseteq \B_r$ for some $r\in(0, \infty)$.

\subsection{Rearrangement\hyp{}invariant function spaces}
In this subsection, we recall some parts of the theory of rearrangement\hyp{}invariant function spaces, which will be needed in the paper. Throughout this subsection, $(R, \mu)$ is a $\sigma$-finite nonatomic measure space such that $\mu(R) = \infty$. Because it will always be unambiguous what the measure $\mu$ is, it is omitted in the notation.

We set
\begin{align*}
\M(R)&= \{f\colon \text{$f$ is a $\mu$-measurable function on $R$ with values in $[-\infty,\infty]$}\}\\
\intertext{and}
\Mpl(R)&= \{f \in \M(R)\colon f \geq 0\ \text{$\mu$-a.e.~on $R$}\}.
\end{align*}

The \emph{nonincreasing rearrangement} $f^*\colon(0,\infty) \to [0, \infty]$ of a function $f\in \M(R)$  is defined as
\begin{equation*}
f^*(t)=\inf\{\lambda\in(0,\infty): \mu(\{x\in R\colon|f(x)|>\lambda\})\leq t\},\ t\in(0,\infty).
\end{equation*}
The function $f^*$ is nonincreasing and the functions $f$ and $f^*$ are equimeasurable; that is, $\mu(\{x\in R\colon|f(x)|>\lambda\})=|\{t\in (0, \infty)\colon f^*(t)>\lambda\}|$ for every $\lambda\in(0,\infty)$. Here $|\cdot|$ on the right-hand side stands for the Lebesgue measure. 

The \emph{Hardy-Littlewood inequality} (\cite[Chapter~2, Theorem~2.2]{BS}) tells us that, for every $f,g\in\Mpl(R)$,
\begin{equation}\label{prel:ri:HL}
\int_R fg\d{\mu} \leq \int_0^\infty f^*(t)g^*(t)\d{t}.
\end{equation}
In particular, by taking $g=\chi_E$ in \eqref{prel:ri:HL}, we obtain
\begin{equation}\label{prel:ri:HLg=chiE}
\int_E f\d{\mu}\leq\int_0^{\mu(E)}f^*(t)\d{t}
\end{equation}
for every $\mu$-measurable $E\subseteq R$.

The \emph{maximal nonincreasing rearrangement} $f^{**} \colon (0,\infty) \to [0, \infty]$ of a function $f\in \M(R)$ is
defined as
\begin{equation*}
f^{**}(t)=\frac1t\int_0^ t f^{*}(s)\d{s},\ t\in(0,\infty).
\end{equation*}
The function $f^{**}$ is nonincreasing and we have
\begin{equation}\label{prel:ri:twostarsdominateonestar}
f^*\leq f^{**}\quad\text{for every $f\in\M(R)$}.
\end{equation}
Unlike $f\mapsto f^*$, the operator $f\mapsto f^{**}$ is subadditive. More precisely, we have (\cite[Chapter~2,~(3.10)]{BS}):
\begin{equation}\label{prel:ri:subadditivityofdoublestar}
\int_0^t(f+g)^*(s)\d{s}\leq\int_0^tf^*(s)\d{s}+\int_0^tg^*(s)\d{s} \quad \text{for every $t\in(0, \infty)$}
\end{equation}
and every $f,g\in\Mpl(R)$.

A functional $\|\cdot\|_{X(0, \infty)}\colon\Mpl(0,\infty)\to[0,\infty]$ is called a \emph{rearrangement\hyp{}invariant function norm} if, for all $f$, $g$ and $\{f_k\}_{k=1}^\infty$ in $\Mpl(0,\infty)$, and every $\alpha\in[0,\infty)$:
\begin{itemize}
\item[(P1)] $\|f\|_{X(0,\infty)}=0$ if and only if $f=0$ a.e.~on $(0,\infty)$; $\|\alpha f\|_{X(0,\infty)}= \alpha \|f\|_{X(0,\infty)}$;  $\|f+g\|_{X(0,\infty)}\leq \|f\|_{X(0,\infty)} + \|g\|_{X(0,\infty)}$;
\item[(P2)] $\|f\|_{X(0,\infty)}\leq\|g\|_{X(0,\infty)}$ if $ f\leq g$ a.e.~on $(0,\infty)$;
\item[(P3)] $\|f_k\|_{X(0,\infty)} \nearrow \|f\|_{X(0,\infty)}$ if $f_k \nearrow f$ a.e.~on $(0,\infty)$;
\item[(P4)] $\|\chi_E\|_{X(0,\infty)}<\infty$ for every measurable $E\subseteq(0, \infty)$ of finite measure;
\item[(P5)] for every measurable $E\subseteq(0, \infty)$ of finite measure, there is a positive finite constant $C_{E,X}$, possibly depending on $E$ and  $\|\cdot\|_{X(0,\infty)}$ but not on $f$, such that $\int_E f(t)\d{t} \leq C_{E,X} \|f\|_{X(0,\infty)}$;
\item[(P6)] $\|f\|_{X(0,\infty)} = \|f^*\|_{X(0,\infty)}$.
\end{itemize}

With every rearrangement\hyp{}invariant function norm $\|\cdot\|_{X(0,\infty)}$, we associate another functional $\|\cdot\|_{X'(0,\infty)}$ defined as
\begin{equation}\label{prel:ri:normX'}
\|f\|_{X'(0, \infty)}= \sup_{\substack{g\in{\Mpl(0,\infty)}\\\|g\|_{X(0,\infty)}\leq1}}\int_0^\infty f(t)g(t)\d{t},\ f\in\Mpl(0, \infty).
\end{equation}
The functional $\|\cdot\|_{X'(0, \infty)}$ is also a rearrangement\hyp{}invariant function norm (\cite[Chapter~2, Proposition~4.2]{BS}), and it is called the \emph{associate function norm} of $\|\cdot\|_{X(0, \infty)}$. Furthermore, we always have (\cite[Chapter~1, Theorem~2.7]{BS})
\begin{equation}\label{prel:ri:normX''}
\|f\|_{X(0,\infty)}= \sup_{\substack{g\in{\Mpl(0,\infty)}\\\|g\|_{X'(0,\infty)}\leq1}}\int_0^\infty f(t)g(t)\d{t} \quad\text{for every $f\in\Mpl(0,\infty)$},
\end{equation}
that is,
\begin{equation}\label{prel:ri:X''=X}
\|\cdot \|_{(X')'(0, \infty)} = \|\cdot \|_{X(0, \infty)}.
\end{equation}
The supremum in \eqref{prel:ri:normX''} does not change when the functions involved are replaced by their nonincreasing rearrangements (\cite[Chapter~2, Proposition~4.2]{BS}), that is,
\begin{equation}\label{prel:ri:normX''down}
\|f\|_{X(0,\infty)}= \sup_{\substack{g\in{\Mpl(0,\infty)}\\\|g\|_{X'(0,\infty)}\leq1}}\int_0^\infty f^*(t)g^*(t)\d{t} \quad\text{for every $f\in\Mpl(0,\infty)$}.
\end{equation}

Given a rearrangement\hyp{}invariant function norm $\|\cdot\|_{X(0, \infty)}$, we define the \emph{rearrangement\hyp{}invariant function space} $X(R)$ as the linear set
\begin{equation*}
X(R)=\{f\in\M(R)\colon \|f^*\|_{X(0,\infty)}<\infty\}
\end{equation*}
endowed with the norm
\begin{equation*}
\|f\|_{X(R)} = \|f^*\|_{X(0, \infty)},\ f\in\M(R).
\end{equation*}
It is a Banach space (\cite[Chapter~1, Theorem~1.6]{BS}). Note that $f\in\M(R)$ belongs to $X(R)$ if and only if $\|f\|_{X(R)}<\infty$, and that $\|f\|_{X(R)} = \||f|\|_{X(R)}$.

We have 
\begin{equation}\label{prel:ri:holder}
\int_R fg\d{\mu}\leq \|f\|_{X(R)}\|g\|_{X'(R)}\quad\text{for every $f,g\in\Mpl(R)$}.
\end{equation}
We shall refer to \eqref{prel:ri:holder} as the H\"older inequality.

In view of \eqref{prel:ri:X''=X}, rearrangement\hyp{}invariant function spaces are uniquely determined by their associate function spaces. Consequently, statements like ``let $X(R)$ be \emph{the} rearrangement\hyp{}invariant function space whose associate function norm is \dots'' are well justified.

We say that a rearrangement\hyp{}invariant function space $X(R)$ is \emph{embedded in} a rearrangement\hyp{}invariant function space $Y(R)$, and we write $X(R)\hookrightarrow Y(R)$, if there is a positive constant $C$ such that $\|f\|_{Y(R)}\leq C\|f\|_{X(R)}$ for every $f\in\M(R)$. We have (\cite[Chapter~1, Theorem~1.8]{BS}) $X(0, \infty)\hookrightarrow Y(0, \infty)$ if and only if $X(0, \infty)\subseteq Y(0, \infty)$. If $X(R)\hookrightarrow Y(R)$ and $Y(R)\hookrightarrow X(R)$ simultaneously, we write that $X(R)=Y(R)$. We clearly have
\begin{align}
X(R)\hookrightarrow Y(R) \quad&\text{if and only if}\quad X(0, \infty)\hookrightarrow Y(0, \infty). \notag\\
\intertext{Furthermore}
X(0, \infty)\hookrightarrow Y(0, \infty)\quad&\text{if and only if}\quad Y'(0, \infty)\hookrightarrow X'(0, \infty), \label{prel:ri:XtoYiffY'toX'}
\end{align}
with the same embedding constants.

The \emph{dilation operator} $D_a$, $a>0$, defined as
\begin{equation*}
D_af(t)= f\Big( \frac{t}{a} \Big),\ f\in\Mpl(0, \infty),\ t\in(0, \infty),
\end{equation*}
is bounded on every rearrangement\hyp{}invariant function space $X(0,\infty)$. More precisely, we have (\cite[Chapter~3, Proposition~5.11]{BS})
\begin{equation}\label{prel:ri:dilation}
\|D_af\|_{X(0,\infty)}\leq\max\{1,a\}\|f\|_{X(0,\infty)}\quad\text{for every $f\in\M(0,\infty)$}.
\end{equation}

We will also need to consider sums and intersections of rearrangement\hyp{}invariant function spaces on a few occasions. When $\|\cdot\|_{X(0, \infty)}$ and $\|\cdot\|_{Y(0, \infty)}$ are rearrangement\hyp{}invariant function norms, then so are $\|\cdot\|_{X(0, \infty) \cap Y(0, \infty)}$ and $\|\cdot\|_{(X + Y)(0, \infty)}$ defined as
\begin{align*}
\|f\|_{X(0, \infty) \cap Y(0, \infty)} &= \max\{\|f\|_{X(0, \infty)}, \|f\|_{Y(0, \infty)}\},\ f\in\Mpl(0, \infty),
\intertext{and}
\|f\|_{(X + Y)(0, \infty)} &= \inf_{f=g+h}(\|g\|_{X(0, \infty)} + \|h\|_{Y(0, \infty)}),\ f\in\Mpl(0, \infty),
\end{align*}
where the infimum extends over all possible decompositions $f=g+h$, $g,h\in\Mpl(0, \infty)$. Furthermore, we have (e.g., \cite[Lemma~1.12]{CNS:03})
\begin{align}
(X(0, \infty) \cap Y(0, \infty))' = (X' + Y')(0, \infty) \notag \\
\intertext{and}
(X + Y)'(0, \infty) = X'(0, \infty) \cap Y'(0, \infty) \label{prel:ri:dual_sum_and_inter},
\end{align}
with equality of norms.

\subsection{Examples of rearrangement\hyp{}invariant function spaces}
Textbook examples of rearrangement\hyp{}invariant function spaces are the \emph{Lebesgue spaces} $L^p(R)$, $p\in[1, \infty]$. The corresponding rearrangement\hyp{}invariant function norm $\|\cdot\|_{L^p(0, \infty)}$ is defined as
\begin{equation*}
\|f\|_{L^p(0, \infty)} = \begin{cases}
	\Big(\int_0^\infty f(t)^p \d{t}\Big)^\frac1{p}  \quad&\text{if $p\in[1, \infty)$},\\
	\esssup_{t\in(0, \infty)}f(t) \quad&\text{if $p = \infty$},
\end{cases}
\end{equation*}
for $f\in\Mpl(0, \infty)$. The layer cake formula (e.g., \cite[Chapter~2, Proposition~1.8]{BS}) shows that the Lebesgue norm $\|\cdot\|_{L^p(R)}$ as defined here (i.e., $\|f\|_{L^p(R)} = \|f^*\|_{L^p(0, \infty)}$) coincides with its standard definition without rearrangements.

The \emph{Lorentz spaces} $L^{p, q}(R)$ are an important generalization of Lebesgue spaces, where either $p\in(1, \infty)$ and $q\in[1, \infty]$ or $p = q = 1$ or $p = q = \infty$. The corresponding rearrangement\hyp{}invariant function norm $\|\cdot\|_{L^{p,q}(0, \infty)}$ is defined as
\begin{equation*}
\|f\|_{L^{p,q}(0, \infty)} = \|t^{\frac1{p} - \frac1{q}}f^*(t)\|_{L^q(0, \infty)},\ f\in\Mpl(0, \infty).
\end{equation*}
However, we need to be more careful here. The functional $\|\cdot\|_{L^{p,q}(0, \infty)}$ is not a rearrangement\hyp{}invariant function norm if $1< p < q \leq\infty$, because it is not subadditive. When this is the case, the functional $\|\cdot\|_{L^{p,q}(0, \infty)}$ is merely \emph{equivalent to a rearrangement\hyp{}invariant function norm}. More precisely, when either $p\in(1, \infty)$ and $q\in[1 ,\infty]$ or $p=q=\infty$, there are positive constants $C_1$ and $C_2$ such that
\begin{equation*}
C_1\|f\|_{L^{(p,q)}(0, \infty)} \leq \|f\|_{L^{p,q}(0, \infty)} \leq C_2\|f\|_{L^{(p,q)}(0, \infty)} \quad \text{for every $f\in\Mpl(0, \infty)$},
\end{equation*}
and the functional $\|f\|_{L^{(p,q)}(0, \infty)} = \|f^{**}\|_{L^{p,q}(0, \infty)}$, $f\in\Mpl(0, \infty)$, is a rearrangement\hyp{}invariant function norm. The interested reader can find more information in \cite[Chapter~4, Section~4]{BS}. Since we will usually not look for exact values of constants, we will use $\|\cdot\|_{L^{p,q}(0, \infty)}$ instead of $\|\cdot\|_{L^{(p,q)}(0, \infty)}$ even when $1< p < q \leq\infty$. Note that we have
\begin{equation*}
\|\cdot\|_{L^p(0, \infty)} = \|\cdot\|_{L^{p,p}(0, \infty)} \quad \text{for $p\in[1, \infty]$}.
\end{equation*}
Furthermore, when $p\in(1, \infty)$ and $1\leq q_1 < q_2\leq \infty$, we have
\begin{equation*}
L^{p, q_1}(R) \subsetneq L^{p, q_2}(R).
\end{equation*}

The Lorentz spaces $L^{p,q}(R)$ belong to the class of \emph{Lorentz--Zygmund spaces} $L^{p, q; \A, \B}(R)$, where $p,q\in[1, \infty]$, $\A = [\alpha_0, \alpha_\infty]\in\R^2$, and $\B = [\beta_0, \beta_\infty]\in\R^2$. Let $\ell$ and $\ell\ell$ be the functions defined as $\ell(t)=1+|\log t|$ and $\ell\ell(t)=\ell(\ell(t))$, $t\in(0,\infty)$. The functional $\|\cdot\|_{L^{p,q;\A, \B}(0, \infty)}$ is defined as
\begin{equation*}
\|f\|_{L^{p,q;\A, \B}(0, \infty)}=\left\|t^{\frac1{p}-\frac1{q}}\ell^\A(t)\ell\ell^\B(t)f^*(t)\right\|_{L^q(0,\infty)},\ f\in\Mpl(0, \infty),
\end{equation*}
where the functions $\ell^\A$ and $\ell\ell^\B$ are defined as
\begin{equation*}
\ell^\A(t)=\begin{cases}
\ell^{\alpha_0}(t),\ &t\in(0,1),\\
\ell^{\alpha_\infty}(t),\ &t\in[1,\infty),
\end{cases}
\end{equation*}
and
\begin{equation*}
\ell\ell^\A(t)=\begin{cases}
\ell\ell^{\alpha_0}(t),\ &t\in(0,1),\\
\ell\ell^{\alpha_\infty}(t),\ &t\in[1,\infty).
\end{cases}
\end{equation*}
As with Lorentz spaces, there is also functional $\|\cdot\|_{L^{(p,q;\A, \B)}(0, \infty)}$ defined as
\begin{equation*}
\|f\|_{L^{(p,q;\A, \B)}(0, \infty)} = \|f^{**}\|_{L^{p,q;\A, \B}(0, \infty)},\ f\in\Mpl(0, \infty).
\end{equation*}
When $p\in(1, \infty]$, the functionals $\|\cdot\|_{L^{p,q;\A, \B}(0, \infty)}$ and $\|\cdot\|_{L^{(p,q;\A, \B)}(0, \infty)}$ are equivalent. When $\A = \B = [0,0]$, we have $\|\cdot\|_{L^{p,q;\A, \B}(0, \infty)} = \|\cdot\|_{L^{p,q}(0, \infty)}$. When $\B = [0, 0]$, we will omit $\B$ in the notation. Not only does the class of Lorentz--Zygmund spaces include Lorentz (and Lebesgue) spaces, but it also includes some \emph{Orlicz spaces}; namely those of logarithmic ($p=q\in[1, \infty)$) and exponential ($p = q = \infty$) types (see \cite[Section~8]{OP:99} for more detail). The functional $\|\cdot\|_{L^{p,q;\A}(0, \infty)}$ is equivalent to a rearrangement\hyp{}invariant function norm if and only if (\cite[Theorem~7.1]{OP:99}) one of the conditions
\begin{itemize}
\item $p = q = 1$, $\alpha_0 \geq 0$, and $\alpha_\infty \leq 0$;
\item $p\in(1, \infty)$ and $q\in[1, \infty]$;
\item $p=\infty$, $q\in[1, \infty)$, and $\alpha_0+\frac1{q}<0$;
\item $p=q=\infty$ and $\alpha_0\leq 0$
\end{itemize}
is satisfied. When we say that $X(R) = L^{p,q; \A}(R)$ is a Lorentz--Zygmund space, we will implicitly assume that one of the conditions is satisfied and treat $X(R)$ as a rearrangement\hyp{}invariant function space. Assuming that either one of the first two conditions is satisfied or $p=q=\infty$, $\alpha_0\leq0$, and $\alpha_\infty\geq0$, we have (\cite[Theorems~6.2~and~6.6]{OP:99})
\begin{equation}\label{thm:examples_LZ_X_asoc}
(L^{p, q; \A}(0, \infty))' = L^{p', q'; -\A}(0, \infty).
\end{equation}

Lorentz--Zygmund spaces $L^{p, q; \A, \B}(R)$ are special instances of \emph{classical Lorentz spaces} $\Lambda^q_v(R)$, $q\in[1, \infty]$, for suitable choices of the weight function $v$. Let $q\in[1, \infty]$ and $v\in\Mpl(0, \infty)$. The functional $\|\cdot\|_{\Lambda^q_v(0, \infty)}$ is defined as
\begin{equation*}
\|f\|_{\Lambda^q_v(0, \infty)} = \|f^*v\|_{L^q(0, \infty)},\ f\in\Mpl(0, \infty).
\end{equation*}
For a general weight $v$, the functional $\|\cdot\|_{\Lambda^q_v(0, \infty)}$ need not be a rearrangement\hyp{}invariant function norm nor be equivalent to one. The interested reader can find characterizations when $\|\cdot\|_{\Lambda^q_v(0, \infty)}$ is equivalent to a rearrangement\hyp{}invariant function norm in \citep{S:90} ($q\in(1,\infty)$), \citep{CGS:96} ($q=1$) or \citep{GS:14} ($q\in(1,\infty]$).

\subsection{Hardy operators}\label{sec:prel:sub:hardy}
Let $\alpha\in[0, n)$. We define the function $\phi_\alpha\colon (0, \infty) \to (0, \infty)$ as
\begin{equation*}
\phi_\alpha(t) = \min\{t^{-1 + \frac{\alpha}{n}}, t^{-1}\},\ t\in(0, \infty).
\end{equation*}
We also define the operators $R_\alpha$ and $H_\alpha$, which map $\Mpl(0, \infty)$ into $\Mpl(0, \infty)$, as
\begin{align*}
R_\alpha f(t) &= \phi_\alpha (t)\int_0^t f(s)\d{s} ,\ t\in(0, \infty), \\
\intertext{and}
H_\alpha f(t) &= \int_t^\infty f(s) \phi_\alpha (s)\d{s},\ t\in(0, \infty),
\end{align*}
where $f\in\Mpl(0, \infty)$. The operators $R_\alpha$ and $H_\alpha$ are formally adjoint in the sense that
\begin{equation}\label{prel:ri:Rk_Hk_self_adjoint}
\int_0^\infty f(t) R_\alpha g(t) \d{t} = \int_0^\infty H_\alpha f(t) g(t) \d{t} \quad \text{for every $f,g\in\Mpl(0, \infty)$}.
\end{equation}
When $\alpha = 0$, we set (cf.~\cite[pp.~150-154]{BS})
\begin{equation*}
P = R_0 \quad \text{and} \quad Q = H_0.
\end{equation*}
Note that we have $f^{**} = Pf^*$ for every $f\in\Mpl(0, \infty)$.

The adjointness of $P$ and $Q$ combined with \eqref{prel:ri:normX'} implies that
\begin{equation}\label{prel:ri:P_bounded_iff_Q}
\parbox{0.85\textwidth}{the operator $P$ is bounded on $X(0, \infty)$ if and only if the operator $Q$ is bounded on $X'(0, \infty)$;}
\end{equation}
moreover, their operator norms are the same. Furthermore, the boundedness of either on a rearrangement\hyp{}invariant function space over $(0, \infty)$ is equivalent to the boundedness of its version restricted to nonincreasing functions. More precisely, it follows from the Hardy--Littlewood inequality \eqref{prel:ri:HL} that
\begin{equation}\label{prel:ri:P_bounded_iff_double_star}
\parbox{0.85\textwidth}{the operator $P$ is bounded on $X(0, \infty)$ if and only if the operator $f \mapsto f^{**}$ is bounded on $X(0, \infty)$;}
\end{equation}
moreover, their operator norms are the same. As for the operator $Q$, it follows from \eqref{prel:ri:Rk_Hk_self_adjoint} combined with \eqref{prel:ri:normX''down} and the fact that the function $Pf^*$ is nonincreasing that
\begin{equation}\label{prel:ri:Q_bounded_iff_Qfstar}
\parbox{0.85\textwidth}{the operator $Q$ is bounded on $X(0, \infty)$ if and only if the operator $f \mapsto Qf^*$ is bounded on $X(0, \infty)$;}
\end{equation}
moreover, their operator norms are the same.

\subsection{Sobolev spaces in the hyperbolic space}
Let $m\in\N$. The \emph{$m$th order hyperbolic gradient} $\gradH[m]$ is defined by \eqref{intro:mth_order_gradient}, where we formally set $\lapH[0]u = u$ and $\lapH[j] = \lapH(\lapH[j-1])$, $j\in\N$. Let $X(\Hn)$ be a rearrangement\hyp{}invariant function space. When $m$ is odd, $\gradH[m]u\in X(\Hn)$ means $\absH{\gradH[m]u}\in X(\Hn)$, and we shall write $\|\gradH[m]u\|_{X(\Hn)}$ for short, instead of $\|\absH{\gradH[m]u}\|_{X(\Hn)}$. We define the \emph{$m$th order Sobolev space $V^m X(\Hn)$} and its subspace $V_0^m X(\Hn)$, consisting of those functions that vanish in a suitable sense, as follows. When $m = 1$, we define
\begin{equation*}
V^1 X(\Hn) = \big\{ u: \text{$u$ is weakly differentiable and $\absH{\gradH u}\in X(\Hn)$} \big\}
\end{equation*}
and
\begin{align*}
V_0^1 X(\Hn) = \Big\{ u\in V^1 X(\Hn):\ &Eu\ \text{is weakly differentiable on $\rn$}\ \text{and}\\
&\dV(\{x\in\Hn: |u(x)| > \lambda\}) < \infty \quad \text{for every $\lambda>0$} \Big\},
\end{align*}
where $Eu$ is the extension of $u$ outside $\Hn$ by $0$. When $m\geq2$, we define
\begin{align*}
V^m X(\Hn) = \Big\{ u:\ &\text{$u$ is $m$-times weakly differentiable}, \gradH[m]u\in X(\Hn),\ \text{and}\\
&\text{$\absH{\gradH(\lapH[j-1]u)}, \lapH[j]u \in L^2_{loc}(\Hn)$ for every $j = 1,\dots, \Big\lfloor \frac{m}{2} \Big\rfloor$}
\Big\}
\end{align*}
and
\begin{align*}
V_0^m X(\Hn) = \Big\{ u\in V^m X(\Hn):\ &\{x\in\Hn: |\lapH[j]u(x)|>\lambda\}\ \text{is bounded}\\
&\text{for all $\lambda>0$ and $j = 0, \dots, \Big\lceil \frac{m}{2} - 1 \Big\rceil$} \Big\}.
\end{align*}
We have $\mathcal C_0^m(\Hn)\subseteq V_0^m X(\Hn)$. Note that our definition of the Sobolev spaces $V^m X(\Hn)$ and $V_0^m X(\Hn)$ for $m\geq2$ is slightly more restrictive than the corresponding definition of suitable higher-order Sobolev spaces on $\rn$ when the full gradient is considered (see~\cite{ACPS:18, CP:98, M:21}). Our definition of $V^m X(\Hn)$ and $V_0^m X(\Hn)$ for $m\geq2$ is such that we may utilize the following pointwise estimate on the nonincreasing rearrangement of $u$. For every $u\in V_0^2 X(\Hn)$, we have (\citep[Proof of Proposition~2.2]{NN:20}, cf.~\cite{C:04})
\begin{equation}\label{prel:potential_estimate}
u^*(t) \leq \int_t^\infty \frac{s(-\lapH u)^{**}(s)}{(n\omega_n \sinh(\varrho(s))^{n-1})^2}\d{s} \quad \text{for every $t\in(0,\infty)$},
\end{equation}
in which $\varrho$ is the inverse function to the function $V$ defined by \eqref{prel:volume_of_hyperbolic_ball}.

\subsection*{Notation} We write $A\lesssim B$, where $A$ and $B$ are nonnegative expressions, if there is a positive constant $c$ such that $A\leq c\cdot B$. The constant $c$ may depend on some parameters in the expression. If not stated explicitly,  what the constant may depend on and what it may not should be obvious from the context. We also write $A\gtrsim B$ with the obvious meaning, and $A\approx B$ when $A\lesssim B$ and $A\gtrsim B$ simultaneously.

\section{Main Results}\label{sec:main}
Before we exhibit the main results of this paper, we first need to make a few definitions. Provided that it makes sense, we will denote be $T^j$ the $j$th iterate of an operator $T$; that is,
\begin{equation*}
T^j = \overbrace{T\circ\cdots\circ T}^{\text{$j$ times}}.
\end{equation*}
The zeroth iterate is to be interpreted as the identity operator.

Given $m\in\N$, $m < n$, we define the operators $\Tm$ and $\Sm$ as
\begin{align}
\Tm &= \begin{cases}
	(H_2\circ P)^k \circ H_1 \quad &\text{if $m$ is odd},\\
	(H_2\circ P)^{k + 1} \quad &\text{if $m$ is even},	
\end{cases} \label{operator_Tm_def}\\
\intertext{and}
\Sm &= \begin{cases}
		H_2^k \circ H_1 \quad &\text{if $m$ is odd},\\
		H_2^{k + 1} \quad &\text{if $m$ is even},
	\end{cases} \label{operator_Sm_def}
\end{align}
where
\begin{equation}\label{E:def_k}
k = \Big\lceil \frac{m}{2} - 1 \Big\rceil.
\end{equation}
The letter $k$ will always mean the same in the rest. Recall that the operators used in the definitions of $\Tm$ and $\Sm$ were defined in \cref{sec:prel:sub:hardy}.

For each $j\in\N_0$, we define a nonnegative kernel $K_j(\cdot,\cdot)$ on $\{(a,b)\in(0, \infty)^2: a\leq b\}$ as
\begin{equation}\label{E:def_kernel_K}
K_j(a,b) = \begin{cases}
1\quad \quad &\text{if $0<a\leq b \leq 1$}, \\
\log(eb)^j \quad &\text{if $0<a\leq 1 \leq b$}, \\
\log(\frac{b}{a})^j \quad &\text{if $1< a \leq b$}.
\end{cases}
\end{equation}
Note that
\begin{equation}\label{E:kernel_K_monotonicity_first_var}
\text{the function $(0, b]\ni a\mapsto K_j(a,b)$ is nonincreasing for each $b>0$}
\end{equation}
and
\begin{equation}\label{E:kernel_K_monotonicity_second_var}
\text{the function $[a, \infty)\ni b\mapsto K_j(a,b)$ is nondecreasing for each $a>0$}.
\end{equation}

We say that a rearrangement\hyp{}invariant function space $Y(\Hn)$ is \emph{the optimal target space for $X(\Hn)$ in \eqref{intro:Sob_ineq}} if $Y(\Hn)$ is the smallest rearrangement\hyp{}invariant function space that makes the inequality valid. More precisely, the inequality is valid with $X(\Hn)$ and $Y(\Hn)$, and, if the inequality is valid with $Y(\Hn)$ replaced by a rearrangement\hyp{}invariant function space $Z(\Hn)$, then $Y(\Hn)\hookrightarrow Z(\Hn)$. In other words, $\|\cdot\|_{Y(\Hn)}$ is the strongest (up to equivalences) rearrangement\hyp{}invariant function norm on the left-hand side of \eqref{intro:Sob_ineq} with which the inequality is still valid.

Our first main result is the following so-called reduction principle for the Sobolev type inequality \eqref{intro:Sob_ineq}. It rephrases the question of whether \eqref{intro:Sob_ineq} is valid in terms of boundedness of suitable operators.
\begin{theorem}\label{thm:reduction_principle}
Let $\|\cdot\|_{X(0, \infty)}$ and $\|\cdot\|_{Y(0, \infty)}$ be rearrangement\hyp{}invariant function norms, $m\in\N$, $m < n$. Consider the following three statements.
\begin{enumerate}[label=(\roman*), ref=(\roman*)]
	\item\label{thm:reduction_principle_Sobolev_item} There is a constant $C_1 > 0$ such that
		\begin{equation}\label{thm:reduction_principle_Sob_ineq}
			\|u\|_{Y(\Hn)} \leq C_1 \|\gradH[m] u\|_{X(\Hn)} \quad \text{for every $u \in V_0^m X(\Hn)$}.
		\end{equation}
	\item\label{thm:reduction_principle_Hardy_item} There is a constant $C_2 > 0$ such that
		\begin{equation}\label{thm:reduction_principle_Hardy_ineq}
			\|\Tm f\|_{Y(0, \infty)} \leq C_2 \|f\|_{X(0, \infty)} \quad \text{for every $f\in\Mpl(0, \infty)$}.
		\end{equation}
	\item\label{thm:reduction_principle_Hardy_simplified_item} There is a constant $C_3 > 0$ such that
	\begin{equation}\label{thm:reduction_principle_Hardy_ineq_simplified}
			\|\Sm f\|_{Y(0, \infty)} \leq C_3 \|f\|_{X(0, \infty)} \quad \text{for every $f\in\Mpl(0, \infty)$}.
		\end{equation}
\end{enumerate}
The statements \ref{thm:reduction_principle_Sobolev_item} and \ref{thm:reduction_principle_Hardy_item} are equivalent. They imply \ref{thm:reduction_principle_Hardy_simplified_item}. If either $m\geq2$ and the operator $f\mapsto f^{**}$ is bounded on $X(0, \infty)$ or $m = 1$, then \ref{thm:reduction_principle_Hardy_simplified_item} implies the first two statements; in other words, if that is the case, then all three statements are equivalent.
\end{theorem}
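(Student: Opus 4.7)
The plan is to follow the classical blueprint for reduction principles in the theory of rearrangement\hyp{}invariant Sobolev inequalities: build a one-sided pointwise estimate for $u^*$ in terms of $|\gradH[m] u|^*$, furnish a converse via radial extremal functions, and compare the two iterated Hardy operators $T_m$ and $S_m$ to one another. The main input from the hyperbolic structure is the potential estimate \eqref{prel:potential_estimate} combined with the asymptotic form of $V$.

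\emph{The implication \ref{thm:reduction_principle_Hardy_item} $\Rightarrow$ \ref{thm:reduction_principle_Sobolev_item}.} First I would verify, using the polynomial behaviour of $V$ near $0$ and its exponential behaviour at infinity implicit in \eqref{prel:volume_of_hyperbolic_ball}, that
\begin{equation*}
\frac{s}{(n\omega_n \sinh^{n-1}(\varrho(s)))^{2}} \approx \phi_2(s), \quad s\in(0, \infty).
\end{equation*}
Substituted into \eqref{prel:potential_estimate}, this yields $u^*(t) \lesssim H_2[(-\lapH u)^{**}](t) = (H_2 \circ P)[(-\lapH u)^*](t)$ for every $u\in V_0^2 X(\Hn)$; the first-order analogue $u^*(t) \lesssim H_1[|\gradH u|^*](t)$ comes from the isoperimetric inequality in $\Hn$ (cf.~\citep{NN:20}). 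Iterating these two estimates $k$ times, ending with the first-order bound when $m$ is odd, produces the master pointwise inequality $u^*(t) \lesssim T_m[|\gradH[m] u|^*](t)$. Taking $\|\cdot\|_{Y(0,\infty)}$ on both sides and invoking \ref{thm:reduction_principle_Hardy_item} with $f = |\gradH[m] u|^*$ yields \ref{thm:reduction_principle_Sobolev_item}.

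\emph{The implication \ref{thm:reduction_principle_Sobolev_item} $\Rightarrow$ \ref{thm:reduction_principle_Hardy_item}.} I would construct radial extremisers. Given a nonincreasing $f\in\Mpl(0, \infty)$, set $v_0(x) = f(V(\distH(x)))$, so that $v_0^* = f$, and iteratively solve the radial ODEs
\begin{equation*}
\bigl(v_{j+1}'(r)\sinh^{n-1}(r)\bigr)' = -v_j(r)\sinh^{n-1}(r), \quad v_{j+1}'(0) = 0, \quad \lim_{r\to\infty} v_{j+1}(r) = 0,
\end{equation*}
for $j=0,\dots,k$ (replacing the final ODE by a first-order integration when $m$ is odd). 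The resulting radial $u\in V_0^m X(\Hn)$ has $|\gradH[m] u|^* = f$; a direct change of variable $t = V(r)$ in the explicit formula for $u$ reveals $u^*(t) \gtrsim T_m f(t)$. Applying \ref{thm:reduction_principle_Sobolev_item} gives \ref{thm:reduction_principle_Hardy_item} for nonincreasing $f$. To extend to general $f\in\Mpl(0, \infty)$, I would use the pointwise bound $T_m f\leq T_m f^*$: in the even case this follows from $Pf\leq Pf^* = f^{**}$ (Hardy--Littlewood, \eqref{prel:ri:HLg=chiE}) and the monotonicity of the positive operators $H_2$ and $P$; in the odd case the auxiliary inequality $H_1 f(t)\lesssim H_1 f^*(t) + Pf^*(t)$, obtained by applying Hardy--Littlewood to the pair $(f, \phi_1\chi_{(t,\infty)})$ and using doubling of $\phi_1$, together with the monotonicity of $(H_2 P)^k$, suffices.

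\emph{Comparison of $T_m$ and $S_m$.} A Fubini calculation shows that
\begin{equation*}
(H_2\circ P)g(t) \approx R_2 g(t) + H_2 g(t) \geq H_2 g(t) \quad\text{for every $g\in\Mpl(0, \infty)$}.
\end{equation*}
Applying the positive operator $(H_2 P)^j$ to both sides and iterating yields $T_m f \gtrsim S_m f$ pointwise for every $f\in\Mpl(0, \infty)$, which together with \ref{thm:reduction_principle_Hardy_item} and property (P2) gives \ref{thm:reduction_principle_Hardy_simplified_item}. For the converse under the hypothesis, the case $m=1$ is trivial since $T_1 = S_1 = H_1$. For $m\geq 2$ with $P$ bounded on $X(0, \infty)$, I would combine the same Fubini expansion with the pointwise bound $R_2 g \leq Pg$ (valid because $t\phi_2(t)\leq 1$) to write $T_m f$ as a finite sum of non-commutative monomials of length $k+1$ in $R_2$ and $H_2$. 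The purely-$H_2$ monomial is $S_m f$, controlled by \ref{thm:reduction_principle_Hardy_simplified_item}; any monomial containing an $R_2$ is absorbed by invoking \ref{thm:reduction_principle_Hardy_simplified_item} on a suitably $P$-averaged input, at the cost of a factor of the operator norm of $P$ on $X(0, \infty)$. The main obstacle here is the delicate non-commutative bookkeeping needed to rewrite every such monomial so that only the boundedness of $S_m$ (and not of intermediate Hardy compositions) is required; I anticipate that the auxiliary operator-theoretic lemmas developed in \cref{sec:aux_results} will be decisive in organising this argument cleanly.
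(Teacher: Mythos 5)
Your outline matches the paper's in two places that are indeed sound: the even-order half of \ref{thm:reduction_principle_Hardy_item}$\Rightarrow$\ref{thm:reduction_principle_Sobolev_item} by iterating \eqref{prel:potential_estimate}, and \ref{thm:reduction_principle_Hardy_item}$\Rightarrow$\ref{thm:reduction_principle_Hardy_simplified_item} via the pointwise bound $T_m f\geq S_m f$ deduced from $(H_2\circ P)g\geq H_2 g$ (this is in fact more direct than the paper's duality route). The odd-order case, however, breaks at both remaining ends. First, the claimed pointwise estimate $u^*(t)\lesssim H_1\big(\absH{\gradH u}^*\big)(t)$ is false: take $u(x)=\eta(V(\distH(x)))$ (smoothed slightly) with $\eta(t)=\int_t^\infty \phi_1(s)\chi_{(A, A+1)}(s)\d{s}$ and $A>1$ large; then $u^*=\eta$, while $\absH{\gradH u}$ is, up to constants depending only on $n$, $\chi_{(A,A+1)}(V(\distH(\cdot)))$, so $\absH{\gradH u}^*\lesssim\chi_{(0,1)}$ and the right-hand side vanishes for $t\geq1$, whereas $u^*(A)\approx A^{-1}>0$. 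The coarea/isoperimetric argument only gives $u^*$ as $\int_t^\infty\big(-\frac{\mathrm{d}u^*}{\mathrm{d}s}\big)\d{s}$ with the non-monotone integrand $-\frac{\mathrm{d}u^*}{\mathrm{d}s}(s)\sinh(\varrho(s))^{n-1}$ controlled only in norm, via \cref{prop:PS_inequality}; the correct route is the paper's: apply \eqref{thm:reduction_principle_Hardy_ineq} to that non-monotone function (which is legitimate precisely because \ref{thm:reduction_principle_Hardy_item} is stated for all $f\in\Mpl(0,\infty)$), and a double-star substitute would instead produce $(H_2\circ P)^k\circ H_1\circ P$, which \ref{thm:reduction_principle_Hardy_item} does not control without extra hypotheses on $X$. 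Second, in \ref{thm:reduction_principle_Sobolev_item}$\Rightarrow$\ref{thm:reduction_principle_Hardy_item} your passage from nonincreasing to general $f$ via $H_1 f\leq H_1 f^*+Pf^*$ leaves the term $(H_2\circ P)^k Pf^*$, and this term is genuinely not admissible: already for $m=1$, $X(\Hn)=L^1(\Hn)$ and the optimal target $Y(\Hn)=\Lambda^1_{v_1}(\Hn)$ of \cref{thm:examples_LZ}, statements \ref{thm:reduction_principle_Sobolev_item} and \ref{thm:reduction_principle_Hardy_item} hold, yet $\|f^{**}\|_{Y(0,\infty)}\geq\int_1^\infty f^{**}(t)\d{t}=\infty$ for every nonzero $f\in L^1(0,\infty)$; a similar computation (e.g.\ $m=3$, $X=L^1$, $Y=Z_1$) rules out $(H_2\circ P)^kPf^*$ for odd $m\geq3$. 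The equivalence of the restricted and unrestricted forms of \eqref{thm:reduction_principle_Hardy_ineq} for odd $m$ is exactly the delicate point of \cref{rem:reduction_principle_Hardy_ineq_equiv_to_reduced}, obtained by duality through \citep[Theorem~3.10]{Pe:20} (see also \citep[Theorem~9.5]{CPS:15}), not by Hardy--Littlewood alone.

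The implication \ref{thm:reduction_principle_Hardy_simplified_item}$\Rightarrow$\ref{thm:reduction_principle_Hardy_item} under boundedness of $f\mapsto f^{**}$ on $X(0,\infty)$ is also not established. Expanding $T_m$ into words in $R_2$ and $H_2$ is a reasonable start, and your ``$P$-averaged input'' does dispose of the monomial $H_2^{k+1}\circ P$, since $\|H_2^{k+1}(Pf)\|_{Y(0,\infty)}=\|S_m(Pf)\|_{Y(0,\infty)}\leq C_3\|P\|_{X(0,\infty)}\|f\|_{X(0,\infty)}$; but for the remaining monomials, above all $R_2^{k+1}$ (and, in the odd case, words such as $R_1\circ R_2^k$ and $R_2^k\circ H_1$, where the innermost $H_1$ moreover blocks the reduction to nonincreasing inputs), no argument is given, and none is obvious: $R_2^{k+1}\chi_{(0,1)}(t)\approx(1+\log t)^k t^{-1}$ while $(H_2^{k+1}\circ P)\chi_{(0,1)}(t)\approx t^{-1}$ for $t\geq1$, and \cref{rem:neither_Hardy_is_better_in_general} records that these families are incomparable in general, so the $R$-monomials cannot simply be dominated by $S_m$ of the same input. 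Controlling them using only the boundedness of $S_m$ from $X$ to $Y$ and of $P$ on $X$ is precisely the content of \cref{prop:operator_induced_norms_equivalences}\ref{prop:operator_induced_norms_equivalences_nu_sigma_item} (applied with $X'$ in place of $X$ and dualized), whose proof is an induction on $m$ passing through intermediate optimal spaces and invoking \citep[Theorem~3.10]{Pe:20} again; your closing sentence defers exactly this step to unspecified auxiliary lemmas, so the implication remains open in your write-up. A smaller omission of the same kind: in the test-function construction you assert $u\in V_0^mX(\Hn)$ and $u^*\gtrsim T_mf$ without the regularity, decay, and approximation checks that occupy a substantial part of the paper's proof, though those are routine by comparison.
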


\begin{remark}\label{rem:reduction_principle_Hardy_ineq_equiv_to_reduced}\ 
\begin{enumerate}[label=(\roman*), ref=(\roman*)]
\item\label{rem:reduction_principle_Hardy_ineq_equiv_to_reduced_item} When $m$ is even, the validity of \eqref{thm:reduction_principle_Hardy_ineq} is equivalent to the validity of the same inequality restricted only to nonincreasing functions $f\in\Mpl(0, \infty)$ (with the same multiplicative constant). That follows immediately from the Hardy--Littlewood inequality \eqref{prel:ri:HL}. Remarkably enough, the same is true even when $m$ is odd\textemdash this time we need to increase the constant in the unrestricted version, however. We will see that in the proof of \cref{thm:reduction_principle}. To achieve that, we will use the remarkable result \citep[Theorem~9.5]{CPS:15} and its generalization \citep[Theorem~3.10]{Pe:20} to the interval $(0, \infty)$. The result ensures that, for $\alpha,\beta\in[0, n)$,
\begin{align*}
\|R_\alpha(R_\beta f^*)^*\|_{X'(0, \infty)} &\approx \|(R_\alpha\circ R_\beta) f^*\|_{X'(0, \infty)} \\
&\approx  \sup_{\substack{g\in\Mpl(0, \infty)\\ \|g\|_{X(0, \infty)}\leq1}} \int_0^\infty g^*(t) (R_\alpha\circ R_\beta) f^*(t) \d{t}
\end{align*}
for every $f\in\Mpl(0, \infty)$ and every rearrangement\hyp{}invariant function norm $\|\cdot\|_{X(0, \infty)}$, despite the fact that the functions $R_\beta f^*$ and $(R_\alpha\circ R_\beta) f^*$ are hardly ever nonincreasing. Furthermore, such an equivalence of restricted and unrestricted versions is also true for \eqref{thm:reduction_principle_Hardy_ineq_simplified}. In the case of \eqref{thm:reduction_principle_Hardy_ineq_simplified}, the Hardy--Littlewood inequality is not enough even for $m$ even.
\item When $m\geq3$, the operators $\Tm$ and $\Sm$ are defined as iterations of operators. Nevertheless, we may express their rearrangement\hyp{}invariant function norms in closed forms, up to equivalence, by means of operators with kernels.
Let $\|\cdot\|_{Y(0, \infty)}$ be a rearrangement\hyp{}invariant function norm. It follows from \cref{prop:properties_of_iterated_operators,prop:iterated_operators_closed_form} that, for every $f\in\Mpl(0, \infty)$,
\begin{align*}
\|\Tm f\|_{Y(0, \infty)} &\approx \Big\| \phi_m(t) \int_0^t f(s) K_k(s,t) \d{s} \Big\|_{Y(0, \infty)} \\
&\quad+ \Big\| \int_t^\infty Pf(s) \phi_m(s) K_k(t,s) \d{s} \Big\|_{Y(0, \infty)}
\end{align*}
when $m\geq4$ is even and
\begin{align*}
\|\Tm f\|_{Y(0, \infty)} &\approx \Big\| \phi_{m-1}(t) \int_0^t f(s) \phi_1(s)s K_{k-1}(s,t)  \d{s} \Big\|_{Y(0, \infty)}\\
&\quad+ \Big\| \int_t^\infty f(s) \phi_m(s) K_k(t,s) \d{s} \Big\|_{Y(0, \infty)}
\end{align*}
when $m\geq3$ is odd; and that
\begin{equation*}
\|\Sm f\|_{Y(0, \infty)} \approx \Big\| \int_t^\infty f(s) \phi_m(s) K_k(t,s) \d{s} \Big\|_{Y(0, \infty)},
\end{equation*}
where $k$ is defined by \eqref{E:def_k}. The multiplicative constants depend only on $m$ and $n$.
\end{enumerate}
\end{remark}

We now exhibit several theorems describing the optimal target space in the Sobolev inequality \eqref{intro:Sob_ineq}. We do not start with the most general one (\cref{thm:optimal_target_general_theorem}), however. Instead, we start with less general ones. Even though they impose some unnecessary assumptions on $X(\Hn)$, they can often be used instead of the most general one and provide simpler descriptions of the optimal target space. In general, their extra assumptions are not mutually exclusive, and so they provide different equivalent descriptions of the optimal target space in a lot of situations. The following theorem can be used either when $m = 1$ or when, loosely speaking, $X(\Hn)$ is not ``too close to $L^1(\Hn)$''.
\begin{theorem}\label{thm:optimal_target_general_theorem_simplified_M_bounded_X}
Let $\|\cdot\|_{X(0, \infty)}$ be a rearrangement\hyp{}invariant function norm, $m\in\N$, $m < n$. Assume that
\begin{equation}\label{thm:optimal_target_general_theorem_condition}
\frac{(1 + \log t)^{\lceil m/2 - 1\rceil}}{t}\chi_{(1, \infty)}(t) \in X'(0, \infty)
\end{equation}
and that one of the conditions
\begin{itemize}
\item $m = 1$;
\item $m \geq 2$ and the operator $f\mapsto f^{**}$ is bounded on $X(0, \infty)$
\end{itemize}
is satisfied. Set $k = \lceil m/2 - 1 \rceil$ and 
\begin{equation}\label{thm:optimal_target_general_theorem_beta}
\beta = \begin{cases}1 \quad &\text{if $m$ is odd},\\
	2 \quad &\text{if $m$ is even}.
\end{cases}
\end{equation}
Let $Y_{m, X}(\Hn)$ be the rearrangement\hyp{}invariant function space whose associate function norm is defined as
\begin{equation}\label{thm:optimal_target_general_theorem_simplified_M_bounded_X_asoc_norm}
\|g\|_{Y_{m, X}'(0, \infty)} = \begin{cases}
	\|R_m g^*\|_{X'(0, \infty)} \quad &\text{if $m\in\{1, 2\}$},\\
	\|(R_\beta \circ R_2^k) g^*\|_{X'(0, \infty)} \quad &\text{if $m\geq3$},
\end{cases}
\end{equation}
for $g\in\Mpl(0, \infty)$. The rearrangement\hyp{}invariant function space $Y_{m, X}(\Hn)$ is the optimal target space for $X(\Hn)$ in \eqref{intro:Sob_ineq}.

Furthermore, if \eqref{thm:optimal_target_general_theorem_condition} is not satisfied, there is no rearrangement\hyp{}invariant function space $Y(\Hn)$ for which \eqref{intro:Sob_ineq} with $X(\Hn)$ is valid.
\end{theorem}

\begin{remark}\ 
\begin{enumerate}[label=(\roman*), ref=(\roman*)]
\item The condition \eqref{thm:optimal_target_general_theorem_condition} is necessary for the existence of any rearrangement\hyp{}invariant target function space $Y(\Hn)$ in \eqref{intro:Sob_ineq}. When $X(\Hn) = L^p(\Hn)$ is a Lebesgue space (or, more generally, a Lorentz space $L^{p, q}(\Hn)$), it is satisfied if and only if $p\in[1, \infty)$.
\item When $X(\Hn) = L^p(\Hn)$ is a Lebesgue space (or, more generally, a Lorentz space $L^{p, q}(\Hn)$), the operator $f\mapsto f^{**}$ is bounded on $X(0,\infty)$ if and only if $p\in(1, \infty]$.
\item The boundedness of the operator $f\mapsto f^{**}$ on $X(0, \infty)$ does not imply the validity of \eqref{thm:optimal_target_general_theorem_condition} (e.g., consider $X(0, \infty) = L^\infty(0, \infty)$).
\item When $m\geq3$, the rearrangement\hyp{}invariant function norm defined by \eqref{thm:optimal_target_general_theorem_simplified_M_bounded_X_asoc_norm} is equivalent to a rearrangement\hyp{}invariant function norm defined as
\begin{equation*}
\Mpl(0,\infty) \ni g \mapsto \Big\| \phi_m(t) \int_0^t g^*(s) K_k(s,t) \d{s} \Big\|_{X'(0, \infty)}.
\end{equation*}
See \cref{rem:operator_induced_norms_closed_forms}\ref{rem:operator_induced_norms_closed_forms:item} for more detail.
\end{enumerate}
\end{remark}

Now is the time to finally see some concrete examples. In the following list of examples, $X(\Hn)$ is a Lorentz--Zygmund space $L^{p, q; \A}(\Hn)$. Recall that Lebesgue spaces $L^p(\Hn)$ coincide with $L^{p, p; [0, 0]}(\Hn)$, and Lorentz spaces $L^{p,q}(\Hn)$ coincide with $L^{p, q; [0, 0]}(\Hn)$. Although the following theorem is shown now, all theorems describing the optimal target space in \eqref{intro:Sob_ineq} from this section are used in its proof.
\begin{theorem}\label{thm:examples_LZ}
Let $X(\Hn) = L^{p, q; \A}(\Hn)$ be a Lorentz--Zygmund space, where $\A = [\alpha_0, \alpha_\infty]$. Let $m\in\N$, $m < n$. Set
\begin{align*}
Y_{m, X}(\Hn) = \begin{cases}
\Lambda^q_{v_1}(\Hn) \quad &\text{if either $m = 1$, $p=q=1$, $\alpha_0\geq0$, and $\alpha_\infty\leq 0$} \\
	&\text{\hphantom{if }or $p\in(1, \frac{n}{m})$}, \\
Z_1(\Hn) \quad &\text{if $m\geq3$ is odd, $p=q=1$, and $\alpha_0=\alpha_\infty=0$}, \\
Z_2(\Hn) \quad &\text{if $m$ is even, $p=q=1$, and $\alpha_0=\alpha_\infty=0$}, \\
\Lambda^q_{v_2}(\Hn) \quad &\text{if $p = \frac{n}{m}$ and $\alpha_0 < \frac1{q'}$}, \\
\Lambda^q_{v_3}(\Hn) \quad &\text{if $p = \frac{n}{m}$, $q\in(1, \infty]$, and $\alpha_0 = \frac1{q'}$}, \\
L^\infty(\Hn) \cap X(\Hn) \quad &\text{if either $p = \frac{n}{m}$, $q = 1$, and $\alpha_0 \geq0$}, \\
	&\text{\hphantom{if }or $p = \frac{n}{m}$, $q \in (1, \infty]$, and $\alpha_0 > \frac1{q'}$}, \\
	&\text{\hphantom{if }or $p\in(\frac{n}{m}, \infty)$}, \\
Z_3(\Hn) \quad &\text{if $p=q=\infty$, $\alpha_0\leq 0$, and $\alpha_\infty > \lceil \frac{m}{2} \rceil$}, \\
\end{cases}
\end{align*}
where
\begin{align*}
v_1(t) &= t^{\frac{n- mp}{np} - \frac1{q}}\ell^{\alpha_0}(t)\chi_{(0, 1)}(t) + t^{\frac1{p} - \frac1{q}}\ell^{\alpha_\infty}(t)\chi_{[1, \infty)}(t), \\
v_2(t) &= t^{-\frac1{q}}\ell^{\alpha_0 - 1}(t)\chi_{(0, 1)}(t) + t^{\frac{m}{n} - \frac1{q}}\ell^{\alpha_\infty}(t)\chi_{[1, \infty)}(t), \\
v_3(t) &= t^{-\frac1{q}}\ell^{-\frac1{q}}(t)\ell\ell^{-1}(t)\chi_{(0, 1)}(t) + t^{\frac{m}{n} - \frac1{q}}\ell^{\alpha_\infty}(t)\chi_{[1, \infty)}(t), \\
\intertext{for $t\in(0, \infty)$, and where}
\|f\|_{Z_1(0, \infty)} &= \int_0^1 t^{\frac{n - m}{n} - 1} f^*(t) \d{t} + \sup_{t\in[1, \infty)} t\ell^{-\frac{m - 1}{2}}(t)f^{**}(t), \\
\|f\|_{Z_2(0, \infty)} &= \sup_{t\in(0, 1)} t^{\frac{n - m}{n}} f^{**}(t) + \sup_{t\in[1, \infty)} t\ell^{-\frac{m}{2}}(t)f^{**}(t), \\
\|f\|_{Z_3(0, \infty)} &= \|f\|_{L^\infty(0, \infty)} + \sup_{t\in[1, \infty)} \ell^{\alpha_\infty - \lceil \frac{m}{2} \rceil}(t)f^{**}(t),
\end{align*}
for $f\in\Mpl(0, \infty)$.

The rearrangement\hyp{}invariant function space $Y_{m, X}(\Hn)$ is the optimal target space for $X(\Hn)$ in \eqref{intro:Sob_ineq}.
\end{theorem}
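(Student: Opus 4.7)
The overall strategy is to combine the reduction principle (\cref{thm:reduction_principle}) with the available optimal-target characterizations: \cref{thm:optimal_target_general_theorem_simplified_M_bounded_X} in the regimes where $f\mapsto f^{**}$ is bounded on $X(0,\infty)$ or $m=1$, and the fully general optimality theorem, invoked through \cref{thm:reduction_principle}\ref{thm:reduction_principle_Hardy_item}, in the remaining $L^1$-type regimes. Each line of the case distinction in the conclusion corresponds to a range of parameters for which the abstract associate-norm formula can be evaluated to produce one of the concrete spaces listed. Throughout, I would use \eqref{thm:examples_LZ_X_asoc}, i.e.\ $(L^{p,q;\A}(0,\infty))' = L^{p',q';-\A}(0,\infty)$, and verify the necessary condition \eqref{thm:optimal_target_general_theorem_condition} by a direct computation of the Lorentz--Zygmund norm of $t^{-1}\ell^k(t)\chi_{(1,\infty)}(t)$; this check also singles out the restriction $\alpha_\infty > \lceil m/2\rceil$ in the $p=q=\infty$ case.

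In every case except $p=q=1$, $\alpha_0=\alpha_\infty=0$ with $m\geq 2$, \cref{thm:optimal_target_general_theorem_simplified_M_bounded_X} applies and supplies a candidate optimal target whose associate norm is $\|(R_\beta\circ R_2^k)g^*\|_{X'(0,\infty)}$. I would evaluate this norm by splitting over $(0,1)$ and $[1,\infty)$ and using that $\phi_\alpha(t)=t^{-1+\alpha/n}$ on $(0,1)$ (algebraic decay) while $\phi_\alpha(t)=t^{-1}$ on $[1,\infty)$ (purely logarithmic kernel). On $(0,1)$ each $R_2$ contributes, after integration, a factor $t^{2/n}$; on $[1,\infty)$ each $H_2\circ P$ contributes essentially a factor of $\ell(t)$. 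Tracking these factors and inverting back via \eqref{prel:ri:X''=X} yields the weights $v_1$, $v_2$, $v_3$ in the subcritical and critical regimes. The $L^\infty\cap X$ target in the supercritical regime arises when the associate weight becomes integrable at $0$, forcing $Y'$ to degenerate to $L^1 + X'$. The borderline $p=n/m$, $\alpha_0 = 1/q'$ case is where the natural weight just fails integrability and must be compensated by an iterated logarithm, producing the $\ell\ell^{-1}$ factor in $v_3$.

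For $p=q=1$, $\alpha_0=\alpha_\infty=0$ with $m\geq 2$, the operator $f\mapsto f^{**}$ is unbounded on $L^1$ and the simplified theorem is unavailable. Here I would work directly from \cref{thm:reduction_principle}\ref{thm:reduction_principle_Hardy_item} and determine the smallest \rihyp\ norm $\|\cdot\|_{Y(0,\infty)}$ for which $\|\Tm f\|_{Y(0,\infty)}\lesssim\|f\|_{L^1(0,\infty)}$. By a standard duality argument this reduces to computing the pointwise envelope of $s^{-1}\Tm\chi_{(0,s)}(t)$ in $s>0$. A Fubini computation shows that the innermost operator ($R_2$ for even $m$, $H_1$ for odd $m$) produces the $t^{(n-m)/n-1}$ behavior on $(0,1)$, while the $k$ outer applications of $H_2\circ P$ each contribute one logarithmic power on $[1,\infty)$; reading off the minimal \rihyp\ envelope of this family gives precisely $Z_1$ when $m$ is odd and $Z_2$ when $m$ is even. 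Optimality is then verified by testing the converse inequality against $f = \chi_{(0,s)}$.

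The main technical obstacle is the precise bookkeeping of the iterated composition $R_\beta\circ R_2^k$ on $(0,1)$ versus $[1,\infty)$: each $H_2\circ P$ on $[1,\infty)$ acts essentially as convolution with a logarithmic kernel, so the $k$-fold iteration produces a polynomial-logarithmic weight whose exponent must be matched against $\A$ and, near the critical exponents, against the iterated logarithm. A secondary difficulty is the threshold $p=n/m$, $\alpha_0 = 1/q'$, where the standard weighted Hardy inequality fails at the endpoint and its iterated-logarithm refinement is needed to obtain $v_3$; this threshold is also what distinguishes the three subcases at $p=n/m$.
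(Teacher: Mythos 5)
Your overall architecture for the non-$L^1$ cases (reduction principle plus \cref{thm:optimal_target_general_theorem_simplified_M_bounded_X}, then case-by-case evaluation) matches the paper in outline, and your checks of \eqref{thm:optimal_target_general_theorem_condition} and of the boundedness of $f\mapsto f^{**}$ on $L^{p,q;\A}(0,\infty)$ (including $p=q=\infty$ with $\alpha_0\le0$) are correct. But the step you describe as ``tracking these factors and inverting back via \eqref{prel:ri:X''=X}'' is where essentially all the work lies, and as sketched it does not go through: (i) the associate norm is a Lorentz--Zygmund norm of the \emph{rearrangement} of the non-monotone function $(R_\beta\circ R_2^k)g^*$ (resp.\ $R_mg^*$), and replacing $(R_mg^*)^*$ by $t\phi_m(t)g^{**}(t)$ is exactly what the paper needs the supremum-operator result \cite[Theorem~3.2]{GOP:06} and the quasiconcavity lemma \cite[Lemma~4.10]{EMMP:20} for; (ii) once the associate norm is reduced to a weighted functional of $g^{**}$, recovering $Y_{m,X}$ is a duality computation for weighted Lorentz/Gamma functionals, which the paper takes from \cite{S:72,GP:03,M:79,M:21} (and, in the subcritical and supercritical regimes, packages through \cref{thm:optimal_target_eucl_and_supercritical}; for $p=q=\infty$ it sidesteps the iterated composition by iterating the optimal first- and second-order targets). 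These are substantive ingredients, not bookkeeping, and your proposal neither supplies them nor points to substitutes.

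The genuine flaw is in the $X(\Hn)=L^1(\Hn)$, $m\ge2$ cases. It is true that, by positivity and the reduction to nonincreasing functions, a space $Y(\Hn)$ is admissible if and only if $\sup_{s>0}s^{-1}\|\Tm\chi_{(0,s)}\|_{Y(0,\infty)}<\infty$; but the optimal target is the Calder\'on (convex, $f^{**}$-majorization) hull of the whole family $\{s^{-1}\Tm\chi_{(0,s)}\}_{s>0}$, not the Marcinkiewicz space generated by its pointwise envelope $\sup_{s}s^{-1}\Tm\chi_{(0,s)}(t)$. For odd $m$ these differ: near $0$ the envelope is $\approx t^{-1+\frac{m}{n}}$, so the envelope heuristic can only produce $\sup_{t\in(0,1)}t^{\frac{n-m}{n}}f^{**}(t)$, i.e.\ $L^{\frac{n}{n-m},\infty}$ locally, whereas the true optimal space $Z_1$ has the strictly smaller local component $\int_0^1 t^{\frac{n-m}{n}-1}f^*(t)\d{t}$ (the same phenomenon already occurs for $m=1$ over $L^1$, where the optimal local piece is $L^{\frac{n}{n-1},1}$, not weak $L^{\frac{n}{n-1}}$). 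Likewise, ``testing the converse against $f=\chi_{(0,s)}$'' only re-derives the admissibility condition above; the optimality claim is the implication that every admissible $Y$ contains $Z_1$ (resp.\ $Z_2$), and that requires identifying the associate norm $\|\cdot\|_{Y_{m,L^1}'(0,\infty)}$. The paper does this via \cref{thm:optimal_target_general_theorem_simplified_M_bounded_Xasoc} (using that $P$ is bounded on $X'=L^\infty$), an explicit chain of equivalences reducing $\|R_1(H_2^k(g^{**}))\|_{L^\infty(0,\infty)}$ to $\sup_{t\in(0,1)}t^{\frac{m}{n}}g^*(t)+\int_1^\infty s^{-1}\ell^{k-1}(s)g^*(s)\d{s}$, and the duality results \cite[Theorem~4.1]{M:79}, \cite[Section~3]{S:72} together with \eqref{prel:ri:dual_sum_and_inter}. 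Without this (or an equivalent computation of the hull), your argument does not establish $Z_1$, and as stated it would land on a strictly larger space in the odd case.
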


\begin{remark}\label{rem:examples_LZ}\ 
\begin{enumerate}[label=(\roman*), ref=(\roman*)]
\item It can be verified that the functionals $\|\cdot\|_{Z_j(0, \infty)}$, $j\in\{1, 2, 3\}$, are really rearrangement\hyp{}invariant function norms. Furthermore, it follows from weighted Hardy's inequality \cite[Theorem~1]{M:72} that the norm $\|\cdot\|_{Z_3(0, \infty)}$ is equivalent to the functional
\begin{equation*}
\Mpl(0, \infty) \ni f\mapsto \|f\|_{L^\infty(0, \infty)} + \sup_{t\in[1, \infty)} \ell^{\alpha_\infty - k - 1}(t)f^{*}(t).
\end{equation*}
As for $\|\cdot\|_{Z_1(0, \infty)}$ and $\|\cdot\|_{Z_2(0, \infty)}$, it is possible to equivalently swap $f^*$ and $f^{**}$ back and forth in the first terms, but it is not possible in the second ones. Moreover, $\|\cdot\|_{Z_1(0, \infty)}$ is still a norm after the swap, but $\|\cdot\|_{Z_2(0, \infty)}$ is not subadditive anymore after the swap.
\item The optimal function spaces from \cref{thm:examples_LZ} can be expressed as intersections of rearrangement\hyp{}invariant function spaces\textemdash namely
\begin{align*}
\Lambda^q_{v_1}(\Hn) &= L^{\frac{np}{n - mp}, q; [\alpha_0, \alpha_\infty]}(\Hn) \cap L^{p,q; [\alpha_0, \alpha_\infty]}(\Hn), \\
Z_1(\Hn) &= L^{\frac{n}{n - m}, 1}(\Hn) \cap L^{(1, \infty; [0, -\frac{m - 1}{2}])}(\Hn), \\
Z_2(\Hn) &= L^{\frac{n}{n - m}, \infty}(\Hn) \cap L^{(1, \infty; [0, -\frac{m}{2}])}(\Hn), \\
\Lambda^q_{v_2}(\Hn) &= L^{\infty, q; [\alpha_0 - 1, \alpha_\infty]}(\Hn) \cap L^{\frac{n}{m}, q; [\alpha_0, \alpha_\infty]}(\Hn), \\
\Lambda^q_{v_3}(\Hn) &= L^{\infty, q; [-\frac1{q}, \alpha_\infty], [-1, 0]}(\Hn) \cap L^{\frac{n}{m}, q; [1-\frac1{q}, \alpha_\infty]}(\Hn), \\
Z_3(\Hn) &= L^\infty(\Hn) \cap L^{\infty, \infty; [\alpha_0, \alpha_\infty - \lceil \frac{m}{2} \rceil]}(\Hn).
\end{align*}
The description of $Y_{m, X}(\Hn)$ given in \cref{thm:examples_LZ} is in a sense ``more revealing'', however, because it provides a clearer insight into how the optimal norm behaves locally and globally.

\item\label{rem:examples_LZ_p=q=infty_nonexistence_of_target_item}  When $\alpha_0\leq0$ (otherwise $L^{\infty, \infty; [\alpha_0, \alpha_\infty]}(\Hn) = \{0\}$) and $\alpha_\infty\leq \lceil m/2 \rceil$, there is no rearrangement\hyp{}invariant function space $Y(\Hn)$ for which \eqref{intro:Sob_ineq} with $X(\Hn) = L^{\infty, \infty; [\alpha_0, \alpha_\infty]}(\Hn)$ is valid. The reason is that the condition \eqref{thm:optimal_target_general_theorem_condition} is not satisfied. In particular, there is no rearrangement\hyp{}invariant function space $Y(\Hn)$ for which \eqref{intro:Sob_ineq} with $X(\Hn) = L^\infty(\Hn)$ is valid.
\end{enumerate}
\end{remark}

The following description of the optimal target space in \eqref{intro:Sob_ineq} can be used either when $m = 2$ or when, loosely speaking, $X(\Hn)$ is not ``too close to $L^\infty(\Hn)$''.
\begin{theorem}\label{thm:optimal_target_general_theorem_simplified_M_bounded_Xasoc}
Let $\|\cdot\|_{X(0, \infty)}$ be a rearrangement\hyp{}invariant function norm, $m\in\N$, $m < n$. Assume that \eqref{thm:optimal_target_general_theorem_condition} and one of the conditions
\begin{itemize}
\item $m = 2$;
\item $m \neq 2$ and the operator $f\mapsto f^{**}$ is bounded on $X'(0, \infty)$
\end{itemize}
are satisfied. Set $k = \lceil m/2 - 1 \rceil$. Let $Y_{m, X}(\Hn)$ be the rearrangement\hyp{}invariant function space whose associate function norm is defined as
\begin{equation}\label{thm:optimal_target_general_theorem_simplified_M_bounded_Xasoc_asoc_norm}
\|g\|_{Y_{m, X}'(0, \infty)} = \begin{cases}
	\|(R_1 \circ H_2^k)g^{**}\|_{X'(0, \infty)} \quad &\text{if $m$ is odd},\\
	\|H_2^{k + 1} g^{**}\|_{X'(0, \infty)} \quad &\text{if $m$ is even},
\end{cases}
\end{equation}
for $g\in\Mpl(0, \infty)$. The rearrangement\hyp{}invariant function space $Y_{m, X}(\Hn)$ is the optimal target space for $X(\Hn)$ in \eqref{intro:Sob_ineq}.

Furthermore, if \eqref{thm:optimal_target_general_theorem_condition} is not satisfied, there is no rearrangement\hyp{}invariant function space $Y(\Hn)$ for which \eqref{intro:Sob_ineq} with $X(\Hn)$ is valid.
\end{theorem}

\begin{remark}\label{rem:boundedness_P_on_Xasoc_implies_necessary_cond}\ 
\begin{enumerate}[label=(\roman*), ref=(\roman*)]
\item\label{rem:boundedness_P_on_Xasoc_implies_necessary_cond_explanation_item} When $m \neq 2$, the assumption that \eqref{thm:optimal_target_general_theorem_condition} is valid in \cref{thm:optimal_target_general_theorem_simplified_M_bounded_Xasoc} is actually vacuously satisfied. The reason is that the boundedness of the operator $f\mapsto f^{**}$ on $X'(0, \infty)$ implies the validity of \eqref{thm:optimal_target_general_theorem_condition}. Indeed, we have
\begin{align*}
\|P^{k + 1}\chi_{(0, 1)}\|_{X'(0, \infty)} \leq \|P\|^{k+1}_{X'(0, \infty)} \|\chi_{(0, 1)}\|_{X'(0, \infty)} < \infty \\
\intertext{and}
P^{k + 1}\chi_{(0, 1)}(t) \approx \chi_{(0, 1)}(t) + \frac{(1 + \log t)^k}{t}\chi_{[1, \infty)}(t) \quad \text{for every $t\in(0, \infty)$}.
\end{align*}
The assumption is included in the statement, nevertheless, because its validity is necessary (and sufficient) for the existence of a rearrangement\hyp{}invariant target function space in \eqref{intro:Sob_ineq}.
\item If $X(\Hn)$ is a Lebesgue space $L^p(\Hn)$ (or, more generally, a Lorentz space $L^{p, q}(\Hn)$, or, even more generally, a Lorentz--Zygmund space $L^{p, q; \A}(\Hn)$), then the operator $f\mapsto f^{**}$ is bounded on $X'(0, \infty)$ if and only if $p\in[1, \infty)$.
\item When $m\geq4$ is even, the rearrangement\hyp{}invariant function norm defined by \eqref{thm:optimal_target_general_theorem_simplified_M_bounded_Xasoc_asoc_norm} is equivalent to a rearrangement\hyp{}invariant function norm defined as
\begin{equation*}
\Mpl(0, \infty) \ni g\mapsto \Big\| \int_t^\infty g^{**}(s) \phi_{m}(s) K_k(t,s) \d{s} \Big\|_{X'(0, \infty)}.
\end{equation*}
When $m\geq3$ is odd, the rearrangement\hyp{}invariant function norm defined by \eqref{thm:optimal_target_general_theorem_simplified_M_bounded_Xasoc_asoc_norm} is equivalent to a rearrangement\hyp{}invariant function norm defined as
\begin{align*}
\Mpl(0, \infty) \ni g\mapsto &\Big\| \phi_1(t) \int_0^t g^{**}(s) s \phi_{m-1}(s) \d{s} \Big\|_{X'(0, \infty)} \\
&\,+ \Big\| \phi_1(t) t \int_t^\infty g^{**}(s) \phi_{m-1}(s) K_{k-1}(t,s) \d{s} \Big\|_{X'(0, \infty)}.
\end{align*}
See \cref{rem:operator_induced_norms_closed_forms}\ref{rem:operator_induced_norms_closed_forms:item} for more detail.
\end{enumerate}
\end{remark}

The following theorem can be used, loosely speaking, when $X(\Hn)$ is not ``too close to $L^1(\Hn)$ nor to $L^\infty(\Hn)$''.
\begin{theorem}\label{thm:optimal_target_general_theorem_simplified_M_bounded_on_both_X_and_Xasoc}
Let $\|\cdot\|_{X(0, \infty)}$ be a rearrangement\hyp{}invariant function norm, $m\in\N$, $3\leq m < n$. Assume that the operator $f\mapsto f^{**}$ is bounded on both $X(0, \infty)$ and $X'(0, \infty)$. Let $Y_{m, X}(\Hn)$ be the rearrangement\hyp{}invariant function space whose associate function norm is defined by
\begin{equation*}
\|g\|_{Y_{m, X}'(0, \infty)} = \|R_m g^*\|_{X'(0, \infty)},\ g\in\Mpl(0, \infty).
\end{equation*}
The rearrangement\hyp{}invariant function space $Y_{m, X}(\Hn)$ is the optimal target space for $X(\Hn)$ in \eqref{intro:Sob_ineq}.
\end{theorem}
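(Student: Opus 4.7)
Plan: The strategy is to deduce the theorem from \cref{thm:optimal_target_general_theorem_simplified_M_bounded_X}, whose hypotheses are satisfied under our assumptions: the boundedness of $f\mapsto f^{**}$ on $X(0,\infty)$ is given, and the necessary condition \eqref{thm:optimal_target_general_theorem_condition} is automatic, since the boundedness of $f\mapsto f^{**}$ on $X'(0,\infty)$ implies it by the same argument as in \cref{rem:boundedness_P_on_Xasoc_implies_necessary_cond}\ref{rem:boundedness_P_on_Xasoc_implies_necessary_cond_explanation_item} (applied on $X'$ and then dualized via \eqref{prel:ri:XtoYiffY'toX'}). That theorem identifies the optimal target's associate norm as $\|(R_\beta \circ R_2^k) g^*\|_{X'(0,\infty)}$, where $\beta \in \{1,2\}$ is chosen so that $\beta+2k=m$. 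It therefore suffices to prove the equivalence
\begin{equation*}
\|R_m g^*\|_{X'(0,\infty)} \approx \|(R_\beta \circ R_2^k) g^*\|_{X'(0,\infty)}
\end{equation*}
for every nonincreasing $g^*\in\Mpl(0,\infty)$.

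For the inequality $\|R_m g^*\|_{X'}\lesssim\|(R_\beta\circ R_2^k)g^*\|_{X'}$, I would establish the pointwise comparison $(R_\beta \circ R_2^k)g^*(t)\gtrsim R_m g^*(t)$. The single-step fact $R_\alpha R_\beta h^*(t)\gtrsim R_{\alpha+\beta}h^*(t)$ for nonincreasing $h^*$ is obtained as follows: on $(0,1)$ one uses Fubini to rewrite $R_\alpha R_\beta h^*(t)=t^{-1+\alpha/n}\int_0^t s^{\beta/n}h^{**}(s)\,ds$ and then exploits $h^{**}(s)\geq h^{**}(t)$ for $s\leq t$ to obtain the lower bound $\tfrac{n}{n+\beta}R_{\alpha+\beta}h^*(t)$; on $[2,\infty)$ one uses $\int_1^t h^{**}(s)\,ds\geq(t-1)h^{**}(t)$ to get a constant multiple of $R_{\alpha+\beta}h^*(t)=h^{**}(t)$, and on $[1,2]$ the comparison is by absolute constants. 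Iterating this estimate $k$ times through the $(k+1)$-fold composition produces the pointwise bound, hence the norm inequality.

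The reverse inequality $\|(R_\beta\circ R_2^k)g^*\|_{X'}\lesssim\|R_m g^*\|_{X'}$ is more delicate. On $(0,1)$ a pointwise bound suffices: iterating the kernel estimate $K_{\alpha,\beta}(t,s)=\phi_\alpha(t)\int_s^t\phi_\beta(r)\,dr\leq \tfrac{n}{\beta}\phi_{\alpha+\beta}(t)$, valid for $s\leq t\leq 1$, yields $(R_\beta\circ R_2^k)g^*(t)\lesssim R_m g^*(t)$ pointwise there. On $(1,\infty)$ the analogous kernel bound picks up unavoidable logarithmic factors and the pointwise estimate fails, so one must pass to norms. The cruder pointwise bound $R_2 h^*(t)\leq Ph^*(t)$ for nonincreasing $h^*$ iterates to $(R_\beta\circ R_2^k)g^*(t)\lesssim P^{k+1}g^*(t)$ on $(1,\infty)$; then the hypothesis that $f\mapsto f^{**}$ is bounded on $X'(0,\infty)$, applied iteratively through \eqref{prel:ri:P_bounded_iff_double_star}, controls $\|P^{k+1}g^*\|_{X'}$ at the cost of a multiplicative constant depending on $k$.

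The main obstacle will be this last step on $(1,\infty)$: the crude bound $\|P^{k+1}g^*\|_{X'}\lesssim\|g^*\|_{X'}$ is generally too loose to match $\|R_m g^*\|_{X'}$, since the weight $\min(t^{m/n},1)$ defining $R_m g^*$ suppresses $g^{**}$ on $(0,1)$ and can make $\|R_m g^*\|_{X'}$ strictly smaller than $\|g^*\|_{X'}$. I expect that closing this estimate requires decomposing $g^*$ according to whether its mass is concentrated on $(0,1)$ or $(1,\infty)$ and treating the resulting pieces separately using the explicit integral formula $(R_\beta \circ R_2^k) g^*(t) = \phi_\beta(t)\int_0^t R_2^k g^*(s)\,ds$ and the full strength of both boundedness hypotheses, thereby aligning the upper bound with the sharper sum-of-spaces structure implicit in $\|R_m g^*\|_{X'}$.
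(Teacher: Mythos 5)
Your route is essentially the paper's own, just repackaged: condition \eqref{thm:optimal_target_general_theorem_condition} does follow from the boundedness of $f\mapsto f^{**}$ on $X'(0,\infty)$ (no dualization via \eqref{prel:ri:XtoYiffY'toX'} is needed\textemdash \cref{rem:boundedness_P_on_Xasoc_implies_necessary_cond} argues directly in $X'(0,\infty)$), so \cref{thm:optimal_target_general_theorem_simplified_M_bounded_X} applies, and everything reduces to the equivalence $\|R_m g^*\|_{X'(0,\infty)} \approx \|(R_\beta\circ R_2^k)g^*\|_{X'(0,\infty)}$, which is precisely \eqref{prop:operator_induced_norms_equivalences_sigma_lambda} of \cref{prop:operator_induced_norms_equivalences} applied with $X'$ in place of $X$ (the paper instead combines \cref{thm:optimal_target_general_theorem} with both parts of \cref{prop:operator_induced_norms_equivalences}; your use of \cref{thm:optimal_target_general_theorem_simplified_M_bounded_X} bundles the first part). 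Your lower bound, i.e.\ the pointwise estimate $(R_\beta\circ R_2^k)g^*\gtrsim R_m g^*$, is correct and provable along the lines you sketch; just be careful to iterate the single-step lemma by always merging the two innermost operators acting on the original $g^*$ (so $R_2(R_{2j}g^*)\gtrsim R_{2j+2}g^*$ followed by positivity of the outer operators), since the intermediate functions $R_2^j g^*$ are not nonincreasing and the lemma cannot be applied to them directly; alternatively the explicit kernel formula \eqref{prop:properties_of_iterated_operators_Ri_R_jk} gives the bound in one stroke.

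The genuine gap is the reverse inequality, and you acknowledge it yourself: the decisive estimate on $(1,\infty)$ is replaced by an expectation. Your guessed decomposition is the right one, but the two ingredients that close it are missing, and they are exactly where the hypotheses enter. Split $g^*=g^*\chi_{(0,1)}+g^*\chi_{(1,\infty)}$ in the part of the norm taken over $(1,\infty)$ (on $(0,1)$ your pointwise kernel bound already yields the equivalence, as in \eqref{prop:operator_induced_norms_sigma_lambda_near1_equiv}). For $g^*\chi_{(1,\infty)}$ the crude bound is in fact sufficient, because the weight in $R_m$ is not suppressed there: $\|(R_\beta\circ R_2^k)(g^*\chi_{(1,\infty)})\|_{X'(0,\infty)}\le \|P\|_{X'(0,\infty)}^{k+1}\|g^*\chi_{(1,\infty)}\|_{X'(0,\infty)}\le \|P\|_{X'(0,\infty)}^{k+1}\|g^{**}\chi_{(1,\infty)}\|_{X'(0,\infty)}\le \|P\|_{X'(0,\infty)}^{k+1}\|R_m g^*\|_{X'(0,\infty)}$, since $\phi_m(t)=t^{-1}$ on $(1,\infty)$; this is \eqref{prop:operator_induced_norms_equivalences_sigma_lambda_near_infty_aux2}. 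For $g^*\chi_{(0,1)}$ one cannot use $P^{k+1}$ at all; instead, the kernel formula gives $(R_\beta\circ R_2^k)(g^*\chi_{(0,1)})(t)\lesssim t^{-1}(1+\log t)^k\int_0^1 g^*$ for $t>1$, and one concludes by H\"older for the rearrangement-invariant norm $g\mapsto\|R_m g^*\|_{X'(0,\infty)}$ (a norm by \cref{prop:operator_induced_norms}, which also makes $Y_{m,X}(\Hn)$ well defined), giving $\int_0^1 g^*\lesssim \|R_m g^*\|_{X'(0,\infty)}$, together with the finiteness of $\|t^{-1}(1+\log t)^k\chi_{(1,\infty)}(t)\|_{X'(0,\infty)}$, which is exactly condition \eqref{thm:optimal_target_general_theorem_condition}; this is \eqref{prop:operator_induced_norms_equivalences_sigma_lambda_near_infty_aux1}. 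Without these two steps\textemdash which are the substance of \cref{prop:operator_induced_norms_equivalences}\ref{prop:operator_induced_norms_equivalences_sigma_lambda_item}\textemdash your argument is not complete.
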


\begin{remark}\label{rem:optimal_target_general_theorem_simplified_M_bounded_on_both_X_and_Xasoc_m_one_two_already_known}\ 
\begin{enumerate}[label=(\roman*), ref=(\roman*)]
\item\label{rem:optimal_target_general_theorem_simplified_M_bounded_on_both_X_and_Xasoc_m_one_two_already_known_m_1_2_item} Owing to \cref{thm:optimal_target_general_theorem_simplified_M_bounded_X}, the conclusion of \cref{thm:optimal_target_general_theorem_simplified_M_bounded_on_both_X_and_Xasoc} is valid for $m\in\{1, 2\}$, too, under less restrictive assumptions.
\item It is known (e.g., \cite{BS, C:97, EMMP:20}) that the boundedness of the operator $f\mapsto f^{**}$ on both $X(0, \infty)$ and $X'(0, \infty)$ is equivalent to the boundedness of the Riesz transforms on $X(\rn)$.
\end{enumerate}
\end{remark}

The following theorem is divided into two parts. The first one characterizes when the optimal target space in \eqref{intro:Sob_ineq} can be described in terms of the optimal target space in a suitable Sobolev type inequality in $\rn$ (see \cref{rem:optimal_target_eucl_and_supercritical}\ref{rem:optimal_target_eucl_and_supercritical_Rn_comparison_item}). The second one characterizes when the optimal target space in \eqref{intro:Sob_ineq} is contained in $L^\infty(\Hn)$.
\begin{theorem}\label{thm:optimal_target_eucl_and_supercritical}
Let $\|\cdot\|_{X(0, \infty)}$ be a rearrangement\hyp{}invariant function norm, $m\in\N$, $m < n$. Assume that the operator $f\mapsto f^{**}$ is bounded on $X'(0, \infty)$. If $m\geq2$, assume that it is also bounded on $X(0, \infty)$.
\begin{enumerate}[label=(\roman*), ref=(\roman*)]
\item\label{thm:optimal_target_eucl_and_supercritical_subcritical_item} If
\begin{equation}\label{thm:optimal_target_eucl_and_supercritical_p_(1_n/m)_condition}
t^{-1 + \frac{m}{n}}\chi_{(1, \infty)}(t) \in X'(0, \infty),
\end{equation}
then the rearrangement\hyp{}invariant function space
\begin{equation*}
Y_{m, X}(\Hn) = Z_{m, X}(\Hn) \cap X(\Hn),
\end{equation*}
where $Z_{m,X}(\Hn)$ is the rearrangement\hyp{}invariant function space whose associate function norm is defined as
\begin{equation}\label{thm:optimal_target_eucl_and_supercritical_p_(1_n/m)_Z_asoc}
\|g\|_{Z_{m, X}'(0, \infty)}  = \|t^{\frac{m}{n}}g^{**}(t)\|_{X'(0, \infty)},\ g\in\Mpl(0, \infty),
\end{equation}
is the optimal target space for $X(\Hn)$ in \eqref{intro:Sob_ineq}. 

On the other hand, if \eqref{thm:optimal_target_general_theorem_condition} is satisfied but \eqref{thm:optimal_target_eucl_and_supercritical_p_(1_n/m)_condition} is not, there is an optimal target space for $X(\Hn)$ in \eqref{intro:Sob_ineq}, but it is not equivalent to $Z_{m, X}(\Hn) \cap X(\Hn)$.
\item\label{thm:optimal_target_eucl_and_supercritical_supercritical_item}  If
\begin{equation}\label{thm:optimal_target_eucl_and_supercritical_p_(n/m_infty)_condition}
t^{-1 + \frac{m}{n}}\chi_{(0, 1)}(t) +  \frac{(1 + \log t)^{\lceil m/2 - 1\rceil}}{t}\chi_{(1, \infty)}(t) \in X'(0, \infty),
\end{equation}
then the rearrangement\hyp{}invariant function space
\begin{equation*}
Y_{m, X}(\Hn) = L^\infty(\Hn) \cap X(\Hn)
\end{equation*}
is the optimal target space for $X(\Hn)$ in \eqref{intro:Sob_ineq}.

On the other hand, if \eqref{thm:optimal_target_general_theorem_condition} is satisfied but \eqref{thm:optimal_target_eucl_and_supercritical_p_(n/m_infty)_condition} is not, there is an (optimal) target space for $X(\Hn)$ in \eqref{intro:Sob_ineq}, but \eqref{intro:Sob_ineq} is not valid with any rearrangement\hyp{}invariant function space $Y(\Hn)$ such that $Y(\Hn) \subseteq L^\infty(\Hn)$.
\end{enumerate}
\end{theorem}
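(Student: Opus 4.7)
The plan is to extract a general description of the optimal target space from the earlier simplified theorems and then match it, in each regime, against the candidate on the right-hand side. Since $f\mapsto f^{**}$ is bounded on $X'(0,\infty)$ (and on $X(0,\infty)$ as well when $m\geq 2$), either \cref{thm:optimal_target_general_theorem_simplified_M_bounded_X} or \cref{thm:optimal_target_general_theorem_simplified_M_bounded_on_both_X_and_Xasoc} applies and gives the optimal target $Y_{m,X}(\Hn)$ whose associate function norm is $\|g\|_{Y_{m,X}'(0,\infty)} = \|R_m g^*\|_{X'(0,\infty)}$. A direct computation from the definition of $\phi_m$ yields the explicit form
\begin{equation*}
R_m g^*(t) = \min\{t^{m/n},1\}\,g^{**}(t) = t^{m/n}g^{**}(t)\chi_{(0,1)}(t) + g^{**}(t)\chi_{[1,\infty)}(t).
\end{equation*}
Both parts then reduce, via the duality identity $(A\cap B)'=A'+B'$ recorded in \eqref{prel:ri:dual_sum_and_inter}, to showing that this functional is equivalent to the associate norm of the proposed candidate.

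For part~\ref{thm:optimal_target_eucl_and_supercritical_subcritical_item} I will establish the equivalence $\|R_m g^*\|_{X'(0,\infty)} \approx \|g\|_{(Z_{m,X}'+X')(0,\infty)}$. For the direction $\lesssim$, I take any decomposition $g=g_1+g_2$, apply the subadditivity \eqref{prel:ri:subadditivityofdoublestar} to get $R_m g^* \leq R_m g_1^* + R_m g_2^*$, and then bound $\|R_m g_1^*\|_{X'}\leq \|t^{m/n}g_1^{**}\|_{X'}=\|g_1\|_{Z_{m,X}'}$ using $\min\{t^{m/n},1\}\leq t^{m/n}$, and $\|R_m g_2^*\|_{X'}\leq\|g_2^{**}\|_{X'}\lesssim\|g_2\|_{X'}$ using $\min\{t^{m/n},1\}\leq 1$ together with \eqref{prel:ri:P_bounded_iff_double_star}. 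For the reverse direction, rearrangement-invariance lets me assume $g=g^*$ and use the concrete split $g_1=g^*\chi_{(0,1)}$, $g_2=g^*\chi_{[1,\infty)}$. A direct calculation gives $g_1^{**}(t)=g^{**}(t)$ on $(0,1]$ and $g_1^{**}(t)=g^{**}(1)/t$ on $(1,\infty)$, so
\begin{equation*}
\|g_1\|_{Z_{m,X}'} \leq \|t^{m/n}g^{**}\chi_{(0,1)}\|_{X'} + g^{**}(1)\,\|t^{-1+m/n}\chi_{[1,\infty)}\|_{X'};
\end{equation*}
the last factor is finite by \eqref{thm:optimal_target_eucl_and_supercritical_p_(1_n/m)_condition}, and the pointwise estimate $tg^{**}(t)\geq g^{**}(1)$ on $[1,\infty)$ together with \eqref{thm:optimal_target_general_theorem_condition} forces $g^{**}(1)\lesssim \|g^{**}\chi_{[1,\infty)}\|_{X'}$. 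Finally, the equimeasurability identity $(g^*\chi_{[1,\infty)})^*(t)=g^*(t+1)$ combined with $g^*\leq g^{**}$ controls $\|g_2\|_{X'}$ by $\|g^{**}\chi_{[1,\infty)}\|_{X'}$, and the two halves add up to $\|R_m g^*\|_{X'}$ by the explicit form above.

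Part~\ref{thm:optimal_target_eucl_and_supercritical_supercritical_item} is handled by the same template, with $L^\infty(\Hn)$ in place of $Z_{m,X}(\Hn)$ and $L^1(0,\infty)$ in place of $Z_{m,X}'(0,\infty)$. The direction $\|R_m g^*\|_{X'} \lesssim \|g\|_{(L^1+X')(0,\infty)}$ now exploits the cruder pointwise bound $g_1^{**}(t)\leq \|g_1\|_{L^1}/t$ to get $R_m g_1^*\leq \|g_1\|_{L^1}\phi_m$, and condition \eqref{thm:optimal_target_eucl_and_supercritical_p_(n/m_infty)_condition} is precisely what guarantees $\|\phi_m\|_{X'}<\infty$. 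The converse direction uses the splitting $g=g^*\chi_{(0,1)}+g^*\chi_{[1,\infty)}$: $\|g_1\|_{L^1}=g^{**}(1)$, and restricting $R_m g^*$ to $(0,1)$ gives $\|R_m g^*\|_{X'}\geq g^{**}(1)\|t^{m/n}\chi_{(0,1)}\|_{X'}$, while $\|g_2\|_{X'}$ is controlled exactly as in part~\ref{thm:optimal_target_eucl_and_supercritical_subcritical_item}. For the sharpness clauses, I will argue contrapositively: since $Y_{m,X}\hookrightarrow Z_{m,X}\cap X$ (respectively $Y_{m,X}\hookrightarrow L^\infty\cap X$) is obtained in full generality by the $\lesssim$ arguments above, failure of the relevant tail integrability condition will be witnessed by a specific $g$ for which the surplus factor $g^{**}(1)t^{-1+m/n}\chi_{[1,\infty)}$ (resp.\ $g^{**}(1)\phi_m$) that arises in the split cannot be absorbed into either piece of the candidate sum space, producing a strict inclusion $Y_{m,X}\subsetneq Z_{m,X}\cap X$ (resp.\ $Y_{m,X}\subsetneq L^\infty\cap X$). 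The main technical obstacle I anticipate is precisely this tail bookkeeping, which both integrability conditions are custom-made to handle.
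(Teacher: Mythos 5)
Your handling of the two positive assertions is correct, and it takes a genuinely different route from the paper. You start from the simplified description $\|g\|_{Y_{m,X}'(0,\infty)}=\|R_m g^*\|_{X'(0,\infty)}$ of the optimal target (legitimate here: boundedness of $f\mapsto f^{**}$ on $X'(0,\infty)$ already yields \eqref{thm:optimal_target_general_theorem_condition}, so \cref{thm:optimal_target_general_theorem_simplified_M_bounded_X} resp.\ \cref{thm:optimal_target_general_theorem_simplified_M_bounded_on_both_X_and_Xasoc} applies), exploit the identity $R_mg^*(t)=\min\{t^{m/n},1\}g^{**}(t)$, and match it against $(Z_{m,X}\cap X)'=Z_{m,X}'+X'$ resp.\ $(L^\infty\cap X)'=L^1+X'$ by cutting at $t=1$; the individual estimates you indicate (subadditivity of $f\mapsto\int_0^t f^*$, the explicit form of $(g^*\chi_{(0,1)})^{**}$, the bound $g^{**}(1)\lesssim\|R_mg^*\|_{X'(0,\infty)}$, and $g^*\chi_{[1,\infty)}\le g^{**}\chi_{[1,\infty)}=R_mg^*\chi_{[1,\infty)}$) all check out. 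The paper instead proves validity via \cref{thm:reduction_principle} by bounding $S_m$ into $X(0,\infty)$ and into $Z_{m,X}(0,\infty)$ (resp.\ $L^\infty(0,\infty)$), and proves optimality by embedding $X\cap Z_{m,X}$ (resp.\ $X\cap L^\infty$) into the space with associate norm $\|R_m g^*\|_{X'(0,\infty)}$, using the pointwise estimate $f^*(t)\chi_{(1,\infty)}(t)\lesssim S_mf^*(t/4)$ and a duality argument near zero; your version is more economical but leans on the same simplified theorems. One point you must add: to invoke $(Z_{m,X}\cap X)'=Z_{m,X}'+X'$ and biduality you need that \eqref{thm:optimal_target_eucl_and_supercritical_p_(1_n/m)_Z_asoc} is an r.i.\ function norm, and it is property (P4) of that functional that requires \eqref{thm:optimal_target_eucl_and_supercritical_p_(1_n/m)_condition} (the paper cites a result for this).

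The sharpness clauses are where the proposal has genuine gaps. In part~\ref{thm:optimal_target_eucl_and_supercritical_supercritical_item} the claim is not that the optimal target differs from $L^\infty\cap X$, but that \eqref{intro:Sob_ineq} fails for \emph{every} r.i.\ space $Y(\Hn)\subseteq L^\infty(\Hn)$; a ``strict inclusion $Y_{m,X}\subsetneq L^\infty\cap X$'' does not give that, and your premise that $Y_{m,X}\hookrightarrow L^\infty\cap X$ holds ``in full generality'' is false: that direction used $\|\phi_m\|_{X'(0,\infty)}<\infty$, i.e.\ precisely the near-zero half of \eqref{thm:optimal_target_eucl_and_supercritical_p_(n/m_infty)_condition} which fails in this regime. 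What is needed is the converse implication: if \eqref{intro:Sob_ineq} held with some $Y\subseteq L^\infty$, it would hold with $L^\infty$, whence $L^1(0,\infty)\hookrightarrow Y_{m,X}'(0,\infty)$, i.e.\ $\|R_mg^*\|_{X'(0,\infty)}\lesssim\|g\|_{L^1(0,\infty)}$; testing with $g=\chi_{(0,a)}$ and letting $a\to0$ (or, as the paper does, passing through the boundedness of $S_m$ from $X(0,\infty)$ to $L^\infty(0,\infty)$ and duality) forces $t^{-1+\frac{m}{n}}\chi_{(0,1)}(t)\in X'(0,\infty)$, contradicting the failure of \eqref{thm:optimal_target_eucl_and_supercritical_p_(n/m_infty)_condition}. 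In part~\ref{thm:optimal_target_eucl_and_supercritical_subcritical_item}, when \eqref{thm:optimal_target_eucl_and_supercritical_p_(1_n/m)_condition} fails, every nonzero $g$ satisfies $g^{**}(t)\gtrsim t^{-1}$ at infinity, so $\|t^{\frac{m}{n}}g^{**}(t)\|_{X'(0,\infty)}=\infty$ for every $g\not\equiv0$: the functional \eqref{thm:optimal_target_eucl_and_supercritical_p_(1_n/m)_Z_asoc} is not a function norm and $Z_{m,X}$ is not an r.i.\ space at all, while $\|g\|_{(Z_{m,X}'+X')(0,\infty)}$ collapses to $\|g\|_{X'(0,\infty)}$, so exhibiting one decomposition whose ``surplus term cannot be absorbed'' says nothing about the infimum defining the sum norm. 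The paper's one-line observation (failure of (P4) for $\|\cdot\|_{Z_{m,X}'(0,\infty)}$) is the correct argument here; your witness-$g$ plan, as described, does not deliver either stated conclusion.
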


\begin{remark}\label{rem:optimal_target_eucl_and_supercritical}\ 
\begin{enumerate}[label=(\roman*), ref=(\roman*)]
\item Let $X(\Hn) = L^{p, q}(\Hn)$ be a Lorentz space. The condition \eqref{thm:optimal_target_eucl_and_supercritical_p_(1_n/m)_condition} is satisfied if and only if either $p = n/m$ and $q = 1$ or $p\in[1, n/m)$. The condition \eqref{thm:optimal_target_eucl_and_supercritical_p_(n/m_infty)_condition} is satisfied if and only if either $p = n/m$ and $q = 1$ or $p\in(n/m, \infty)$.
\item\label{rem:optimal_target_eucl_and_supercritical_Rn_comparison_item} Let $Z_{m, X}(\rn)$ denote the rearrangement\hyp{}invariant function space in the Euclidean space $\rn$ whose associate function norm is defined by \eqref{thm:optimal_target_eucl_and_supercritical_p_(1_n/m)_Z_asoc}. If \eqref{thm:optimal_target_eucl_and_supercritical_p_(1_n/m)_condition} is satisfied, then $Z_{m, X}(\rn)$ is the optimal target space for $X(\rn)$ in the Sobolev inequality
\begin{equation*}
\|u\|_{Z(\rn)} \leq C \|D^m u\|_{X(\rn)},
\end{equation*}
where $D^m$ is the vector of all $m$th order partial derivatives of $u$, and where $u$ is a $m$ times weakly differentiable function in $\rn$ whose weak derivatives up to order $m - 1$ vanish at infinity in a suitable sense (see \cite{M:21}). In contrast with the hyperbolic space $\Hn$, it is not possible to intersect $Z_{m, X}(\rn)$ with $X(\rn)$. Furthermore, if, in addition, the maximal operator is bounded on $X(\rn)$, then $Z_{m, X}(\rn)$ is also the optimal target space for $X(\rn)$ and the $m$th order Riesz potential in $\rn$ (see \cite{EMMP:20}).
\item In general, neither \eqref{thm:optimal_target_eucl_and_supercritical_p_(1_n/m)_condition} nor \eqref{thm:optimal_target_eucl_and_supercritical_p_(n/m_infty)_condition} implies that the operator $f\mapsto f^{**}$ is bounded on $X(0, \infty)$ or $X'(0, \infty)$. For example, \eqref{thm:optimal_target_eucl_and_supercritical_p_(1_n/m)_condition} is satisfied for $X(0, \infty) = L^1(0, \infty)$, but the operator $f\mapsto f^{**}$ is not bounded on $L^1(0, \infty)$. It is also satisfied for $X(0, \infty) = L^1(0, \infty) \cap L^\infty(0, \infty)$, but the operator is not bounded on $X'(0, \infty)$. Similar counterexamples show that \eqref{thm:optimal_target_eucl_and_supercritical_p_(n/m_infty)_condition} does not imply the boundedness  of the operator on $X(0, \infty)$ or $X'(0, \infty)$.
\end{enumerate}
\end{remark}

We conclude this section with the most general theorem describing the optimal target space in \eqref{intro:Sob_ineq}. It completely characterizes optimal target spaces in the Sobolev inequality. For example, when $m\geq3$ and $X(\Hn) = L^1(\Hn)$, we need to use it because none of the previous theorems can be used.
\begin{theorem}\label{thm:optimal_target_general_theorem}
Let $\|\cdot\|_{X(0, \infty)}$ be a rearrangement\hyp{}invariant function norm, $m\in\N$, $m < n$. Assume that \eqref{thm:optimal_target_general_theorem_condition} is satisfied. Set $k = \lceil m/2 - 1 \rceil$.
Let $Y_{m, X}(\Hn)$ be the rearrangement\hyp{}invariant function space whose associate function norm is defined as
\begin{equation}\label{thm:optimal_target_general_theorem_asoc_norm}
\|g\|_{Y_{m, X}'(0, \infty)} = \begin{cases}
	\|(R_1\circ (H_2\circ P)^k)g^*\|_{X'(0, \infty)} \quad &\text{if $m$ is odd}, \\
	\|(H_2\circ P)^{k+1}g^*\|_{X'(0, \infty)} \quad &\text{if $m$ is even},
\end{cases}
\end{equation}
for $g\in\Mpl(0, \infty)$. The rearrangement\hyp{}invariant function space $Y_{m, X}(\Hn)$ is the optimal target space for $X(\Hn)$ in \eqref{intro:Sob_ineq}.

Furthermore, if \eqref{thm:optimal_target_general_theorem_condition} is not satisfied, there is no rearrangement\hyp{}invariant function space $Y(\Hn)$ for which \eqref{intro:Sob_ineq} with $X(\Hn)$ is valid.
\end{theorem}

\begin{remark}
When $m\geq4$ is even, the rearrangement\hyp{}invariant function norm defined by \eqref{thm:optimal_target_general_theorem_asoc_norm} is equivalent to a rearrangement\hyp{}invariant function norm defined as
\begin{align*}
\Mpl(0, \infty) \ni g\mapsto &\Big\| \phi_m(t) \int_0^t g^*(s) K_k(s,t) \d{s} \Big\|_{X'(0, \infty)} \\
&\,+ \Big\| \int_t^\infty g^{**}(s) \phi_m(s) K_k(t,s) \d{s} \Big\|_{X'(0, \infty)}.
\end{align*}
When $m\geq3$ is odd, the rearrangement\hyp{}invariant function norm defined by \eqref{thm:optimal_target_general_theorem_asoc_norm} is equivalent to a rearrangement\hyp{}invariant function norm defined as
\begin{align*}
\Mpl(0, \infty) \ni g\mapsto &\Big\| \phi_1(t) \int_0^t g^{**}(s) s \phi_{m-1}(s) \d{s} \Big\|_{X'(0, \infty)}\\
&\,+ \Big\| \phi_1(t) t \int_t^\infty g^{**}(s) \phi_{m-1}(s) K_{k-1}(t,s) \d{s} \Big\|_{X'(0, \infty)}\\
&\,+ \Big\| \phi_m(t) \int_0^t g^*(s) K_k(s,t) \d{s} \Big\|_{X'(0, \infty)}.
\end{align*}
See \cref{rem:operator_induced_norms_closed_forms}\ref{rem:operator_induced_norms_closed_forms:item} for more detail.
\end{remark}

\section{Auxiliary Results}\label{sec:aux_results}
The following P\'olya--Szeg\"o inequality for the hyperbolic gradient, which ensures that a rearrangement\hyp{}invariant function norm of the hyperbolic gradient is not increased by symmetrization, was essentially proved in \citep[Theorem~2.2]{N:20}. It was proved there in the setting of Lorentz spaces, but the proof relies only on rearrangement techniques, the coarea formula and the isoperimetric inequality in the hyperbolic space $\Hn$ (\citep{BDS:15}). Therefore, it can easily be extended to the setting of general rearrangement\hyp{}invariant function norms\textemdash to that end, see \cite[Lemma~4.1]{CP:98} or \cite[Lemma~3.3]{CP:09}\textemdash and we omit its proof.
\begin{proposition}\label{prop:PS_inequality}
Let $\|\cdot\|_{X(0, \infty)}$ be a rearrangement\hyp{}invariant function norm. If $u\in V_0^1X(\Hn)$, then the function $u^*$ is locally absolutely continuous on $(0, \infty)$, and we have
\begin{equation*}
n\omega_n\Big\|-\frac{du^*}{dt}(t)\sinh(\varrho(t))^{n-1}\Big\|_{X(0, \infty)} \leq \| \gradH u\|_{X(\Hn)},
\end{equation*}
where $\varrho$ is the inverse function to the function $V$ defined by \eqref{prel:volume_of_hyperbolic_ball}.
\end{proposition}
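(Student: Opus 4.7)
My plan is to follow the classical Pólya--Szegő scheme, using the hyperbolic isoperimetric inequality of \cite{BDS:15} in place of its Euclidean counterpart, and then to lift the resulting level-set inequality to an arbitrary rearrangement-invariant function norm via a Hardy--Littlewood--Pólya (HLP) argument. The main ingredients are the coarea formula in $\Hn$, the hyperbolic isoperimetric inequality, Cauchy--Schwarz, and the reduction principle connecting level-set inequalities to r.i.~norm inequalities on $(0, \infty)$.

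First, I would establish local absolute continuity of $u^*$. Set $\mu(t) = V_g(\{|u| > t\})$. Property (P5) forces $|\gradH u|_g$ to be locally integrable, so the coarea formula applied to $|u|$ gives that $\mu$ is locally absolutely continuous on $(0, \infty)$ with
\[
-\mu'(t) = \int_{\{|u|=t\}} \frac{d\mathcal{H}_g^{n-1}}{|\gradH u|_g} \quad \text{for a.e.~} t > 0.
\]
Since $u$ vanishes at infinity, $u^*$ is essentially the inverse of $\mu$, and the local absolute continuity of $u^*$ on $(0, \infty)$ follows.

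Next, I would derive the basic level-set inequality. By Cauchy--Schwarz,
\[
\mathcal{H}_g^{n-1}(\{|u|=t\})^2 \leq (-\mu'(t)) \int_{\{|u|=t\}} |\gradH u|_g \, d\mathcal{H}_g^{n-1},
\]
while the hyperbolic isoperimetric inequality gives $\mathcal{H}_g^{n-1}(\{|u|=t\}) \geq n\omega_n \sinh(\varrho(\mu(t)))^{n-1}$. Coupling these with the coarea identity
\[
\int_\lambda^\infty \mathcal{H}_g^{n-1}(\{|u|=t\}) \, dt = \int_{\{|u| > \lambda\}} |\gradH u|_g \, dV_g
\]
and the Hardy--Littlewood inequality \eqref{prel:ri:HLg=chiE} on the right, then changing variables via $r = \mu(t)$ and using $-\mu'(u^*(r)) = 1/(-(u^*)'(r))$, yields
\[
\int_0^\tau n\omega_n (-(u^*)'(r)) \sinh(\varrho(r))^{n-1} \, dr \leq \int_0^\tau (|\gradH u|_g)^*(r) \, dr \quad \text{for every } \tau > 0.
\]

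The main obstacle is that this inequality involves the function $F(r) := n\omega_n(-(u^*)'(r)) \sinh(\varrho(r))^{n-1}$ itself, whereas the HLP principle, which converts such level-set inequalities into norm inequalities valid for every r.i.~norm, requires the analogous statement for the decreasing rearrangement $F^*$. To upgrade, I would apply the preceding argument to the truncations $T_{a,b}(u) = \min\{(|u|-a)_+, b-a\}$ for all $0 \leq a < b$; each belongs to $V_0^1X(\Hn)$ and has gradient supported in $\{a < |u| < b\}$, which yields the previous inequality with $F$ and $|\gradH u|_g$ both localized to the corresponding slices. Patching these restricted inequalities as in \cite[Lemma~4.1]{CP:98} or \cite[Lemma~3.3]{CP:09} delivers $\int_0^\tau F^*(r) \, dr \leq \int_0^\tau (|\gradH u|_g)^*(r) \, dr$ for every $\tau > 0$, and the HLP principle combined with property (P6) then yields the stated bound.
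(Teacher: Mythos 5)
Your overall route is exactly the one the paper has in mind: the paper omits the proof of this proposition, pointing to \cite[Theorem~2.2]{N:20} for the Lorentz-space case (coarea formula, the hyperbolic isoperimetric inequality of \cite{BDS:15}, Cauchy--Schwarz) and to \cite[Lemma~4.1]{CP:98} or \cite[Lemma~3.3]{CP:09} for the truncation/patching mechanism that upgrades the level-set estimate to a Hardy--Littlewood--P\'olya majorization, hence to every rearrangement-invariant norm via (P6). Your second and third steps reproduce this scheme faithfully, including the key point that the inequality on intervals $(0,\tau)$ alone is not enough and truncations must be rearranged first.

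The step that fails as written is your justification of the local absolute continuity of $u^*$. First, $\mu(\lambda)=\dV(\{|u|>\lambda\})$ is in general \emph{not} locally absolutely continuous: if $|u|$ has a plateau at a positive level (perfectly compatible with $u\in V_0^1X(\Hn)$\textemdash take $u$ smooth, compactly supported, constant on a ball), then $\mu$ jumps there; moreover the coarea formula only gives $-\mu'(\lambda)\ge\int_{\{|u|=\lambda\}}\absH{\gradH u}^{-1}\d{\mathcal H^{n-1}_g}$ for a.e.\ $\lambda$, not the equality you state. More importantly, absolute continuity does not pass to a (generalized) inverse: an absolutely continuous, strictly decreasing $\mu$ whose derivative vanishes on a set of positive measure has an inverse with a singular part, so ``$u^*$ is essentially the inverse of $\mu$'' proves nothing. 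The absolute continuity of $u^*$ must itself be extracted from the isoperimetric/coarea machinery, and in fact it needs the multi-level truncation you only invoke later: applying your slice argument to $v=\sum_i T_{a_i,b_i}(u)$ over finitely many disjoint intervals $(t_1^i,t_2^i)\subset[c,d]\subset(0,\infty)$, with $a_i=u^*(t_2^i)$ and $b_i=u^*(t_1^i)$, gives
\[
\sum_i\bigl(u^*(t_1^i)-u^*(t_2^i)\bigr)\le\frac{1}{n\omega_n\sinh(\varrho(c))^{n-1}}\int_0^{\sum_i(t_2^i-t_1^i)}\bigl(\absH{\gradH u}\bigr)^*(s)\d{s},
\]
which yields local absolute continuity because $\int_0^\delta(\absH{\gradH u})^*\to0$ as $\delta\to0$ by (P5); note that the single-interval estimate is insufficient here, since $\sum_i\int_0^{\ell_i}g^*$ need not be small when $\sum_i\ell_i$ is. This is precisely part of what \cite[Lemma~4.1]{CP:98} and \cite[Lemma~3.3]{CP:09} deliver, so the gap is repaired by letting those lemmas carry the absolute-continuity statement as well; with that correction the rest of your argument goes through and coincides with the paper's intended proof.
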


The following proposition is a step toward getting the behavior of Hardy type operators that we will need to handle under our control.
\begin{proposition}\label{prop:properties_of_iterated_operators}
Let $f,g,\in\Mpl(0, \infty)$, $\alpha, \beta\in[0, n)$, and $j \in \N_0$.
\begin{enumerate}[label=(\roman*), ref=(\roman*)]
	\item\label{prop:properties_of_iterated_operators_H2P_self_adjoint_item} We have
	\begin{align}\label{prop:properties_of_iterated_operators_H2P_self_adjoint}
	\Big( \frac{n-\alpha}{n} \Big)^j \int_0^\infty (H_\alpha\circ P)^j f(t) g(t) \d{t}	&\leq \int_0^\infty f(t) (H_\alpha\circ P)^j g(t) \d{t} \notag\\
	&\leq \Big( \frac{n}{n-\alpha} \Big)^j \int_0^\infty (H_\alpha\circ P)^j f(t) g(t) \d{t}.
	\end{align}
	\item\label{prop:properties_of_iterated_operators_composition_pointwise_item} For every $t\in(0, \infty)$, we have
	\begin{align}
		(R_\beta \circ R_\alpha^j)f(t) &= \frac{\phi_\beta(t)}{j!}\int_0^t f(s) \Big( \int_{s}^t \phi_\alpha \Big)^j \d{s} \label{prop:properties_of_iterated_operators_Ri_R_jk}\\
		\intertext{and}
		(H_\alpha^j \circ H_\beta)f(t) &= \frac1{j!}\int_t^\infty f(s) \phi_\beta(s) \Big( \int_t^s \phi_\alpha \Big)^j \d{s}. \label{prop:properties_of_iterated_operators_Hjk_Hi}
	\end{align}
	\item\label{prop:properties_of_iterated_operators_HaPk_item} If $\alpha\in(0, n)$, then we have
	\begin{align}
		\|(H_\alpha\circ P)^{j + 1} f\|_{X(0, \infty)} &\approx \|(H_\alpha^{j+1} \circ P)f\|_{X(0, \infty)} + \|R_\alpha^{j+1} f\|_{X(0, \infty)} \label{prop:properties_of_iterated_operators_(Hj_P)k}\\
		\intertext{and}
		\|(R_\beta\circ(H_\alpha\circ P)^{j+1}) f\|_{X(0, \infty)} &\approx \|(R_\beta \circ H_\alpha^{j+1} \circ P)f\|_{X(0, \infty)} \notag\\
		&\quad + \|(R_\beta \circ R_\alpha^{j+1}) f\|_{X(0, \infty)} \label{prop:properties_of_iterated_operators_Ri_(Hj_P)k}
	\end{align}
	for every rearrangement\hyp{}invariant function norm $\|\cdot\|_{X(0, \infty)}$. The multiplicative constants depend only on $\alpha, \beta$, $n$ and $j$.
\end{enumerate}
\end{proposition}
\begin{proof}
We start with \ref{prop:properties_of_iterated_operators_H2P_self_adjoint_item}. By Fubini theorem, we have
\begin{equation*}
(H_\alpha \circ P)f(t) = \Big( \int_t^\infty \frac{\phi_\alpha(s)}{s} \d{s}\Big) \Big(\int_0^t f(s) \d{s}\Big) + \int_t^\infty f(s) \Big(\int_s^\infty \frac{\phi_\alpha(\tau)}{\tau} \d{\tau}\Big) \d{s}.
\end{equation*}
Moreover, we have
\begin{equation*}
\int_t^\infty \frac{\phi_\alpha(s)}{s} \d{s} = \begin{cases}
\frac{n}{n - \alpha}t^{-1 + \frac{\alpha}{n}} - \frac{\alpha}{n - \alpha}\quad &\text{if $t\in(0, 1)$}, \\
t^{-1} \quad &\text{if $t\in[1, \infty)$},
\end{cases}
\end{equation*}
for every $t\in(0, \infty)$, and so
\begin{equation*}
\phi_\alpha(t) \leq \int_t^\infty \frac{\phi_\alpha(s)}{s} \d{s} \leq \frac{n}{n-\alpha} \phi_\alpha(t) \quad \text{for every $t\in(0, \infty)$}.
\end{equation*}
Hence we have
\begin{equation}\label{prop:properties_of_iterated_operators_Ha_P_approx_Ra_+_Ha}
(R_\alpha + H_\alpha) f(t) \leq (H_\alpha\circ P) f(t) \leq \frac{n}{n-\alpha} (R_\alpha + H_\alpha) f(t),
\end{equation}
which combined with \eqref{prel:ri:Rk_Hk_self_adjoint} implies \eqref{prop:properties_of_iterated_operators_H2P_self_adjoint}.

As for \ref{prop:properties_of_iterated_operators_composition_pointwise_item}, note that both identities are trivially true when $j = 0$. Let $j\in\N$. Thanks to straightforward computations involving the Fubini theorem and a change of variables, we have
\begin{align}
R_\alpha^jf(t) &= \frac{\phi_\alpha(t)}{(j - 1)!}\int_0^t f(s) \Big( \int_s^t \phi_\alpha \Big)^{j-1} \d{s} \label{prop:properties_of_iterated_operators_Ra_k}\\ 
\intertext{and}
H_\alpha^kf(t) &= \frac1{(j - 1)!}\int_t^\infty f(s) \phi_\alpha(s) \Big( \int_t^s \phi_\alpha \Big)^{j-1} \d{s} \label{prop:properties_of_iterated_operators_Ha_k}
\end{align}
for every $t\in(0, \infty)$. Using that, the Fubini theorem and the same change of variables again, we obtain
\begin{align*}
(R_\beta\circ R_\alpha^j) f(t) &= \frac{\phi_\beta(t)}{(j-1)!}\int_0^t \phi_\alpha(s) \Big( \int_0^s f(\tau) \Big( \int_\tau^s \phi_\alpha \Big)^{j-1} \d{\tau} \Big) \d{s} \\
&= \frac{\phi_\beta(t)}{(j-1)!}\int_0^t f(\tau) \Big( \int_\tau^t \phi_\alpha(s) \Big( \int_\tau^s \phi_\alpha \Big)^{j-1} \d{s} \Big) \d{\tau} \\
&= \frac{\phi_\beta(t)}{j (j-1)!}\int_0^t f(\tau) \Big( \int_\tau^t \phi_\alpha \Big)^{j} \d{\tau}.
\end{align*}
The identity \eqref{prop:properties_of_iterated_operators_Hjk_Hi} can be obtained in a similar way.

It remains to prove \ref{prop:properties_of_iterated_operators_HaPk_item}. We start with some preliminary computations. First, we clearly have
\begin{equation}\label{prop:properties_of_iterated_operators_kernel_phi_alpha_explicit}
\int_a^b \phi_\alpha = \begin{cases}
	\frac{n}{\alpha}(b^\frac{\alpha}{n} - a^\frac{\alpha}{n})  \quad &\text{if $0\leq a \leq b \leq 1$}, \\
	\frac{n}{\alpha}(1 - a^\frac{\alpha}{n}) + \log b \quad &\text{if $0\leq a \leq 1 \leq b < \infty$}, \\
	\log \frac{b}{a} \quad &\text{if $1 \leq a \leq b < \infty$}.
\end{cases}
\end{equation}
Using that, it is not hard to see that
\begin{equation}\label{prop:properties_of_iterated_operators_aux_upper}
\int_0^a \Big( \int_s^b \phi_\alpha \Big)^i \d{s} \lesssim a \Big( \int_a^{2b} \phi_\alpha \Big)^i \approx a \Big( \int_{\frac{a}{2}}^{b} \phi_\alpha \Big)^i \quad \text{for every $0 < a \leq b < \infty$}
\end{equation}
and every $i\in\N_0$, in which the multiplicative constants depend only on $\alpha$,  $n$ and $i$. The equivalence is an immediate consequence of a change of variables, and so only the inequality needs to be verified. That is straightforward when $a\in(0, 1]$, in which case we simply estimate $\int_s^b \phi_\alpha$ by $\int_0^b \phi_\alpha$. When $a\in[1,b]$, we have
\begin{equation*}
\int_0^a \Bigg( \int_s^b \phi_\alpha \Bigg)^i \d{s} \leq \int_0^1 \Bigg( \int_0^b \phi_\alpha \Bigg)^i \d{s} + \int_1^a \log^i\Big( \frac{b}{s} \Big) \d{s}.
\end{equation*}
As for the first term, using the fact that
\begin{equation}\label{prop:properties_of_iterated_operators_monotonicity}
\text{the function $(0, b] \ni t \mapsto t \log^i(e^i b/t)$ is nondecreasing}
\end{equation}
(increasing, actually), we obtain
\begin{equation*}
\int_0^1 \Bigg( \int_0^b \phi_\alpha \Bigg)^i \d{s} \approx (1 + \log b)^i \approx \log^i(e^i b) \leq a\log^i\Big( \frac{e^i b}{a} \Big).
\end{equation*}
As for the second term, we have
\begin{align*}
\int_1^a \log^i\Big( \frac{b}{s} \Big) \d{s} &= b\int_\frac{b}{a}^b \frac{\log^i \tau}{\tau^2} \d{\tau} \leq b\int_\frac{b}{a}^\infty \frac{\log^i \tau}{\tau^2} \d{\tau} \\
&\approx a\Big( 1 + \log^i\Big( \frac{b}{a} \Big) \Big) \approx a\log^i\Big( \frac{e^i b}{a} \Big).
\end{align*}
Hence
\begin{equation*}
\int_0^a \Bigg( \int_s^b \phi_\alpha \Bigg)^i \d{s} \lesssim a\log^i\Big( \frac{e^i b}{a} \Big) \approx a\log^i\Big( \frac{2 b}{a} \Big) \approx a \Big( \int_a^{2b} \phi_\alpha \Big)^i.
\end{equation*}

Next, we claim that
\begin{equation}\label{prop:properties_of_iterated_operators_Ri_H_jk_P_pointwise_upper_estimate}
(R_\beta \circ H_\alpha^{j + 1} \circ P)f(t) \lesssim (R_\beta\circ R_\alpha^{j + 1})f(2t) + (H_\alpha^{j + 1} \circ H_\beta\circ P)f(t/2)
\end{equation}
for every $t\in(0, \infty)$, in which the multiplicative constant depends only on $\alpha$, $\beta$, $n$ and $j$. To that end, using \eqref{prop:properties_of_iterated_operators_Ha_k} and the Fubini theorem, we have
\begin{align}
(R_\beta \circ H_\alpha^{j + 1} \circ P)f(t) &\approx \phi_\beta(t) \int_0^t \Bigg( \int_s^\infty Pf(\tau) \phi_\alpha(\tau) \Big( \int_s^\tau \phi_\alpha \Big)^j \d{\tau} \Bigg) \d{s} \notag\\
&= \phi_\beta(t) \int_0^t Pf(\tau) \phi_\alpha(\tau) \Bigg( \int_0^\tau \Big( \int_s^\tau \phi_\alpha \Big)^j \d{s} \Bigg) \d{\tau} \notag\\
&\quad + \phi_\beta(t) \int_t^\infty Pf(\tau) \phi_\alpha(\tau) \Bigg( \int_0^t \Big( \int_s^\tau \phi_\alpha \Big)^j \d{s} \Bigg) \d{\tau} \label{prop:properties_of_iterated_operators_Ri_H_jk_P_Fubini}
\end{align}
for every $t\in(0, \infty)$. Now, using the Fubini theorem again together with \eqref{prop:properties_of_iterated_operators_aux_upper}, we have
\begingroup
\allowdisplaybreaks
\begin{align*}
&\phi_\beta(t) \int_0^t Pf(\tau) \phi_\alpha(\tau) \Bigg( \int_0^\tau \Big( \int_s^\tau \phi_\alpha \Big)^j \d{s} \Bigg) \d{\tau} \\
&= \phi_\beta(t)\int_0^t f(x) \int_x^t \frac{\phi_\alpha(\tau)}{\tau}\Big( \int_0^\tau \Big( \int_s^\tau \phi_\alpha \Big)^j \d{s} \Big) \d{\tau} \d{x} \\
&\lesssim \phi_\beta(t)\int_0^t f(x) \int_x^t \phi_\alpha(\tau) \Big( \int_\tau^{2\tau} \phi_\alpha \Big)^j \d{\tau} \d{x} \\
&\leq \phi_\beta(t)\int_0^t f(x) \int_x^{2t} \phi_\alpha(\tau) \Big( \int_\tau^{2t} \phi_\alpha \Big)^j \d{\tau} \d{x} \\
&\approx \phi_\beta(t)\int_0^t f(x) \Big( \int_x^{2t} \phi_\alpha \Big)^{j + 1} \d{x} \\
&\lesssim \phi_\beta(2t)\int_0^{2t} f(x) \Big( \int_x^{2t} \phi_\alpha \Big)^{j + 1} \d{x} \\
&\approx  (R_\beta \circ R_\alpha^{j + 1})f(2t),
\end{align*}
where we used \eqref{prop:properties_of_iterated_operators_Ri_R_jk} in the last step. Furthermore, using the fact that
\begin{equation*}
\int_{\frac{a}{2}}^a \phi_\alpha \approx a\phi_\alpha(a) \quad \text{for every $a\in(0, \infty)$},
\end{equation*}
in which the multiplicative constants depend only on $\alpha$ and $n$, the fact that the function $(0, \infty)\ni t \mapsto t\phi_\beta(t)$ is nondecreasing, and \eqref{prop:properties_of_iterated_operators_aux_upper} again, we obtain
\begin{align*}
&\phi_\beta(t) \int_t^\infty Pf(\tau) \phi_\alpha(\tau) \Bigg( \int_0^t \Big( \int_s^\tau \phi_\alpha \Big)^j \d{s} \Bigg) \d{\tau} \\
&\lesssim t\phi_\beta(t) \int_t^\infty Pf(\tau) \phi_\alpha(\tau) \Big( \int_{\frac{t}{2}}^\tau \phi_\alpha \Big)^j \d{\tau} \\
&\approx  t\phi_\beta(t) \int_t^\infty Pf(\tau) \frac1{\tau}\Big( \int_{\frac{\tau}{2}}^\tau \phi_\alpha \Big) \Big( \int_{\frac{t}{2}}^\tau \phi_\alpha \Big)^j \d{\tau} \\
&\leq \int_t^\infty Pf(\tau) \phi_\beta(\tau) \Big( \int_{\frac{t}{2}}^\tau \phi_\alpha \Big)^{j + 1} \d{\tau} \\
&\leq \int_{\frac{t}{2}}^\infty Pf(\tau) \phi_\beta(\tau) \Big( \int_{\frac{t}{2}}^\tau \phi_\alpha \Big)^{j + 1} \d{\tau} \\
&\approx (H_\alpha^{j + 1}\circ H_\beta \circ P)f\Big( \frac{t}{2} \Big),
\end{align*}
\endgroup
where we used \eqref{prop:properties_of_iterated_operators_Hjk_Hi} in the last step. Combining the last two chains of inequalities with \eqref{prop:properties_of_iterated_operators_Ri_H_jk_P_Fubini}, we obtain \eqref{prop:properties_of_iterated_operators_Ri_H_jk_P_pointwise_upper_estimate}. Finally, we claim that
\begin{align}
(H_\alpha^{j + 1} \circ P)f(t) + R_\alpha^{j + 1} f(t) &\lesssim (H_\alpha \circ P)^{j + 1} f(t) \notag\\
&\lesssim (H_\alpha^{j + 1} \circ P)f(t/2^j) + R_\alpha^{j + 1} f(2^jt), \label{prop:properties_of_iterated_operators_(Hj_P)k_pointwise}
\end{align}
for every $t\in(0, \infty)$, in which the multiplicative constants depend only on $\alpha$, $n$ and $j$. We shall prove that by induction on $j$. If $j = 0$, \eqref{prop:properties_of_iterated_operators_(Hj_P)k_pointwise} reads as
\begin{equation*}
(H_\alpha \circ P)f(t) + R_\alpha f(t) \lesssim (H_\alpha \circ P) f(t) \lesssim (H_\alpha \circ P)f(t) + R_\alpha f(t),
\end{equation*}
which is clearly true thanks to \eqref{prop:properties_of_iterated_operators_Ha_P_approx_Ra_+_Ha}. Now, let us assume that \eqref{prop:properties_of_iterated_operators_(Hj_P)k_pointwise} is valid for some $j\in\N_0$. Using \eqref{prop:properties_of_iterated_operators_Ha_P_approx_Ra_+_Ha}, we have
\begin{equation*}
(H_\alpha \circ P)^{j+2} f(t) \approx ((R_\alpha + H_\alpha)\circ(H_\alpha \circ P)^{j+1}) f(t) \quad \text{for every $t\in(0, \infty)$}.
\end{equation*}
On the one hand, using the inductive hypothesis, we have
\begin{align*}
((R_\alpha + H_\alpha)\circ(H_\alpha \circ P)^{j + 1}) f(t) &\gtrsim ((R_\alpha + H_\alpha)\circ(H_\alpha^{j + 1}\circ P + R_\alpha^{j + 1}))f(t) \\
&\geq R_\alpha^{j + 2}f(t) + (H_\alpha^{j + 2}\circ P)f(t)
\end{align*}
for every $t\in(0, \infty)$. Hence, we have established the desired lower bound. On the other hand, using the inductive hypothesis again together with changes of variables, we obtain
\begin{align*}
&((R_\alpha + H_\alpha)\circ(H_\alpha \circ P)^{j+1}) f(t) \\
&\lesssim (R_\alpha + H_\alpha)((H_\alpha^{j+1} \circ P)f(\cdot/2^j))(t) + (R_\alpha + H_\alpha)(R_\alpha^{j+1} f(2^j\,\cdot))(t) \\
&\approx ((R_\alpha + H_\alpha)\circ(H_\alpha^{j+1} \circ P))f(t/2^j) + ((R_\alpha + H_\alpha)\circ R_\alpha^{j+1})f(2^jt) \\
&= R_\alpha^{j + 2}f(2^{j}t) + (H_\alpha^{j+2} \circ P)f(t/2^j) \\
&\quad + (R_\alpha \circ H_\alpha^{j+1} \circ P)f(t/2^j) + (H_\alpha \circ R_\alpha^{j+1})f(2^jt)
\end{align*}
for every $t\in(0, \infty)$. Combining that with \eqref{prop:properties_of_iterated_operators_Ri_H_jk_P_pointwise_upper_estimate}, we arrive at
\begin{align*}
((R_\alpha + H_\alpha)\circ(H_\alpha \circ P)^{j+1}) f(t) &\lesssim R_\alpha^{j + 2}f(2^jt) + (H_\alpha^{j+2} \circ P)f(t/2^j)\\
&\quad + R_\alpha^{j + 2}f(t/2^{j-1}) + (H_\alpha^{j+2} \circ P)f(t/2^{j+1})\\
&\quad + (H_\alpha \circ R_\alpha^{j+1})f(2^jt) \\
&\lesssim R_\alpha^{j + 2}f(2^{j+1} t) + (H_\alpha^{j+2} \circ P)f(t/2^{j+1}) \\
&\quad + (H_\alpha \circ R_\alpha^{j+1})f(2^{j}t)
\end{align*}
for every $t\in(0, \infty)$, in which the multiplicative constants depend only on $\alpha$, $n$ and $j$. In view of that, in order to establish the desired upper bound in \eqref{prop:properties_of_iterated_operators_(Hj_P)k_pointwise}, it is sufficient to prove that
\begin{equation*}
(H_\alpha \circ R_\alpha^{j+1})f(t) \lesssim R_\alpha^{j + 2}f(2t) + (H_\alpha^{j+2} \circ P)f(t/2) \quad \text{for every $t\in(0 ,\infty)$},
\end{equation*}
in which the multiplicative constants depend only on $\alpha$, $n$ and $j$. To that end, using the Fubini theorem and \eqref{prop:properties_of_iterated_operators_Ra_k}, we have
\begin{align*}
(H_\alpha \circ R_\alpha^{j+1})f(t) &\approx \int_0^t f(\tau) \int_t^\infty \phi_\alpha(s)^2 \Big( \int_\tau^s \phi_\alpha \Big)^j \d{s} \d{\tau} \\
&\quad + \int_t^\infty f(\tau) \int_\tau^\infty \phi_\alpha(s)^2 \Big( \int_\tau^s \phi_\alpha \Big)^j \d{s} \d{\tau}.
\end{align*}
We claim that
\begin{equation}\label{prop:properties_of_iterated_operators_aux_upper2}
\int_a^\infty \phi_\alpha^2(s) \Big( \int_b^s \phi_\alpha \Big)^i \d{s} \lesssim \phi_\alpha(a) \Big( \int_b^{2a} \phi_\alpha \Big)^{i+1} \approx \phi_\alpha(a) \Big( \int_{\frac{b}{2}}^a \phi_\alpha \Big)^{i+1}
\end{equation}
for every $0 < b \leq a < \infty$ and every $i\in\N_0$, in which the multiplicative constants depend only on $\alpha$, $n$ and $i$. The equivalence is an immediate consequence of a change of variables. If $0< b \leq a \leq 1$, then
\begin{align*}
\int_a^\infty \phi_\alpha^2(s) \Big( \int_b^s \phi_\alpha \Big)^i \d{s} &\lesssim \phi_\alpha(a)\int_a^1 s^{-1 + \frac{\alpha}{n}} s^\frac{i\alpha}{n} \d{s} \\
&\quad + \int_1^\infty \frac{(1 + \log s)^i}{s^2} \d{s} \\
&\approx \phi_\alpha(a)(a^{\frac{(i+1)\alpha}{n}} - 1) + 1 \\
&\lesssim \phi_\alpha(a)a^{\frac{(i+1)\alpha}{n}} \approx \phi_\alpha(a) \Big( \int_{\frac{a}{2}}^a \phi_\alpha \Big)^{i + 1} \\
&\leq \phi_\alpha(a) \Big( \int_{\frac{b}{2}}^a \phi_\alpha \Big)^{i + 1},
\end{align*}
whereas, if $1\leq b\leq a < \infty$, then
\begin{align*}
\int_a^\infty \phi_\alpha^2(s) \Big( \int_b^s \phi_\alpha \Big)^i \d{s} &= \int_a^\infty \frac{\log^i(\frac{s}{b})}{s^2} \d{s} \\
&= \frac1{b} \int_{\frac{a}{b}}^\infty \frac{\log^i(\tau)}{\tau^2} \d{\tau} \\
&\approx \frac1{a} \Big( 1 + \log^i\Big( \frac{a}{b} \Big) \Big) \\
&\approx \frac1{a}\log^i\Big( \frac{2a}{b} \Big) \lesssim \frac1{a}\log^{i + 1}\Big( \frac{2a}{b} \Big) \\
&= \phi_\alpha(a) \Big( \int_b^{2a} \phi_\alpha \Big)^{i+1}.
\end{align*}
The remain case $0< b \leq 1 \leq a < \infty$ can be proved similarly. Using \eqref{prop:properties_of_iterated_operators_aux_upper2} and \eqref{prop:properties_of_iterated_operators_Ra_k}, we have
\begin{align*}
&\int_0^t f(\tau) \int_t^\infty \phi_\alpha(s)^2 \Big( \int_\tau^s \phi_\alpha \Big)^j \d{s} \d{\tau} \\
&\lesssim \phi_\alpha(t) \int_0^t f(\tau) \Big( \int_\tau^{2t} \phi_\alpha \Big)^{j+1} \d{\tau} \\
&\lesssim R_\alpha^{j+2}  f(2t).
\end{align*}

Thanks to \eqref{prop:properties_of_iterated_operators_aux_upper2}, \eqref{prop:properties_of_iterated_operators_Ha_k} and \eqref{prop:properties_of_iterated_operators_Ha_P_approx_Ra_+_Ha}, we also have
\begin{align*}
&\int_t^\infty f(\tau) \int_\tau^\infty \phi_\alpha(s)^2 \Big( \int_\tau^s \phi_\alpha \Big)^j \d{s} \d{\tau} \\
&\lesssim \int_t^\infty f(\tau) \phi_\alpha(\tau) \Big( \int_{\frac{\tau}{2}}^\tau \phi_\alpha \Big)^{j+1} \d{\tau} \\
&\leq \int_{\frac{t}{2}}^\infty f(\tau) \phi_\alpha(\tau) \Big( \int_{\frac{t}{2}}^\tau \phi_\alpha \Big)^{j+1} \d{\tau} \\
&\approx H_\alpha^{j+2}f\Big( \frac{t}{2} \Big) \leq (H_\alpha^{j+2}\circ P) f\Big( \frac{t}{2} \Big),
\end{align*}
which concludes the inductive step.

At last, having \eqref{prop:properties_of_iterated_operators_(Hj_P)k_pointwise} at our disposal, the rest is easy. The desired equivalences \eqref{prop:properties_of_iterated_operators_(Hj_P)k} and \eqref{prop:properties_of_iterated_operators_Ri_(Hj_P)k} follow from \eqref{prop:properties_of_iterated_operators_(Hj_P)k_pointwise} owing to the boundedness of the dilation operator on rearrangement\hyp{}invariant function spaces (see \eqref{prel:ri:dilation}).
\end{proof}

\begin{remark}\label{rem:neither_Hardy_is_better_in_general}
Let $\alpha\in(0, n)$. When $j\in\N$, $j\geq2$, the operators $H_\alpha^j \circ P$ and $R_\alpha^j$ are in general incomparable in the sense that the boundedness of either from a rearrangement\hyp{}invariant function space $X(0, \infty)$ to a rearrangement\hyp{}invariant function space $Y(0, \infty)$ does not imply that the other is also bounded from $X(0, \infty)$ to $Y(0, \infty)$. That can be illustrated by the following two examples (actually, the first one works also for $j = 1$). It is not hard to see that the operator $R_\alpha^j$ is bounded from the Lorentz--Zygmund space $L^{\infty, \infty; [0, j - 1]}(0, \infty)$ to $L^\infty(0, \infty)$. However, $H_\alpha^j \circ P$ is not bounded from $L^{\infty, \infty; [0, j - 1]}(0, \infty)$ to any rearrangement\hyp{}invariant function space, for $\ell^{[0, 1 - j]} \in L^{\infty, \infty; [0, j - 1]}(0, \infty)$ but $(H_\alpha^j \circ P)(\ell^{[0, 1 - j]})(t) = \infty$ for every $t\in(0, \infty)$. As for the other example, it can be shown that the operator $H_\alpha^j \circ P$ is bounded from $L^1(0, \infty)$ to $L^{\frac{n}{n - 2j}, \infty}(0, \infty) \cap L^{(1, \infty; [0, -1])}(0, \infty)$, whereas $R_\alpha^j$ is bounded from $L^1(0, \infty)$ only to $L^{\frac{n}{n - 2j}, \infty}(0, \infty) \cap L^{(1, \infty; [0, -j])}(0, \infty)$.
\end{remark}

The following proposition tells us that rearrangement\hyp{}invariant function norms of some iterated Hardy operators are equivalent to those of certain non-iterated Hardy-type operators involving the kernel defined by \eqref{E:def_kernel_K}.
\begin{proposition}\label{prop:iterated_operators_closed_form}
Let $\alpha\in(0, n)$, $\beta\in[0, n)$, and $j \in \N_0$. For every rearrangement\hyp{}invariant function norm $\|\cdot\|_{X(0, \infty)}$ and every $f\in\Mpl(0, \infty)$, we have
\begin{align}
\|(R_\beta \circ R_\alpha^j)f\|_{X(0, \infty)} &\approx \Big\| \phi_{\beta + \alpha j}(t) \int_0^t f(s) K_j(s,t) \d{s} \Big\|_{X(0, \infty)}, \label{prop:iterated_operators_closed_form:E-RR}\\
\|(H_\alpha^j \circ H_\beta)f\|_{X(0, \infty)} &\approx \Big\| \int_t^\infty f(s) \phi_{\beta + \alpha j}(s) K_j(t,s) \d{s} \Big\|_{X(0, \infty)}, \label{prop:iterated_operators_closed_form:E-HH}\\
\intertext{and}
\|(R_\beta \circ H_\alpha^{j+1})f\|_{X(0, \infty)} &\approx \Big\| \phi_\beta(t) \int_0^t f(s) s \phi_{(j+1)\alpha}(s) \d{s} \Big\|_{X(0, \infty)} \nonumber\\
&\quad+ \Big\| \phi_\beta(t) t \int_t^\infty f(s) \phi_{(j+1)\alpha}(s) K_j(t,s) \d{s} \Big\|_{X(0, \infty)}. \label{prop:iterated_operators_closed_form:E-RH}
\end{align}
Furthermore, if $\alpha\in (0, \frac{n}{j+1})$, then we also have
\begin{align}
\|((H_\alpha \circ P)^{j+1}\circ H_\beta)f\|_{X(0, \infty)} &\approx \Big\| \phi_{(j+1)\alpha}(t) \int_0^t f(s) \phi_\beta(s)s K_j(s,t)  \d{s} \Big\|_{X(0, \infty)} \nonumber\\
&\quad+ \Big\| \int_t^\infty f(s) \phi_{\beta + (j+1)\alpha}(s) K_{j+1}(t,s) \d{s} \Big\|_{X(0, \infty)}. \label{prop:iterated_operators_closed_form:E-HPH}
\end{align}
The multiplicative constants in these equivalences depend only on $\alpha, \beta$, $n$, and $j$.
\end{proposition}
\begin{proof}
Our point of departure are the following pointwise estimates:
\begin{align}
K_j(2a,b) &\lesssim K_j\Big( a, \frac{b}{2} \Big) \quad \text{for all $0<a\leq \frac{b}{2}$},\label{prop:iterated_operators_closed_form:E1}\\
K_j(a,b) &\lesssim K_{j+1}(a,b) \quad \text{for all $0<a\leq \frac{b}{2}$},\label{prop:iterated_operators_closed_form:E6}\\
K_j(a,2a) &\approx 1 \quad \text{for every $a>0$},\label{prop:iterated_operators_closed_form:E5}\\
\Big( \int_a^b \phi_\alpha \Big)^j &\lesssim b \phi_{j\alpha}(b) K_j(a,b) \quad \text{for all $0<a\leq b$},\label{prop:iterated_operators_closed_form:E2}\\
\Big( \int_a^b \phi_\alpha \Big)^j &\gtrsim b \phi_{j\alpha}(b) K_j\Big( a,\frac{b}{2} \Big) \quad \text{for all $0<a\leq \frac{b}{2}$},\label{prop:iterated_operators_closed_form:E3}
\end{align}
and
\begin{equation}\label{prop:iterated_operators_closed_form:E4}
aK_j(a,b) \lesssim \int_0^a K_j(s,b) \d{s} \lesssim aK_j(a,2b) \quad \text{for all $0<a\leq b$}.
\end{equation}
The inequalities \eqref{prop:iterated_operators_closed_form:E1}--\eqref{prop:iterated_operators_closed_form:E6} and the equivalence \eqref{prop:iterated_operators_closed_form:E5} can be easily proved by analyzing the pointwise behavior of the kernel $K_j$, and we omit their proofs. The inequality \eqref{prop:iterated_operators_closed_form:E2} follows from \eqref{prop:properties_of_iterated_operators_kernel_phi_alpha_explicit}. We now turn our attention to \eqref{prop:iterated_operators_closed_form:E3}. First, for $0<a\leq \frac{b}{2} < b\leq 2$, we have
\begin{align*}
\Big( \int_a^b \phi_\alpha \Big)^j &\geq \Big( \int_{b/2}^b \phi_\alpha \Big)^j \approx \Big( \int_{b/2}^b s^{-1 + \frac{\alpha}{n}} \d{s} \Big)^j \approx b^{\frac{j\alpha}{n}} \approx b \phi_{j\alpha}(b) \\
&= b \phi_{j\alpha}(b) K_j\Big( a,\frac{b}{2} \Big).
\end{align*}
Second, for $0< a \leq 1$ and $b>2$, we have
\begin{equation*}
\Big( \int_a^b \phi_\alpha \Big)^j \geq \Big( \int_1^b s^{-1} \d{s} \Big)^j = \log(b)^j \approx \log\Big(\frac{e}{2}b\Big)^j = b \phi_{j\alpha}(b) K_j\Big( a,\frac{b}{2} \Big).
\end{equation*}
Finally, when $1<a\leq \frac{b}{2}$ and $b > 2$, we have
\begin{equation*}
\Big( \int_a^b \phi_\alpha \Big)^j = \Big( \int_a^b s^{-1} \d{s} \Big)^j = \log\Big( \frac{b}{a} \Big)^j \geq \log\Big( \frac{b}{2a} \Big)^j =  b \phi_{j\alpha}(b) K_j\Big( a,\frac{b}{2} \Big).
\end{equation*}
Putting all three estimates together, we obtain \eqref{prop:iterated_operators_closed_form:E3}. Next, we will prove the second inequality in \eqref{prop:iterated_operators_closed_form:E4}. When $0 < a\leq\min\{1,b\}$, we clearly have
\begin{equation}\label{prop:iterated_operators_closed_form:5}
\int_0^a K_j(s,b) \d{s} = \int_0^a K_j(a,b) \d{s} =  aK_j(a,b) \leq a K_j(a,2b).
\end{equation}
For $1<a\leq b$, we see that
\begin{align}
\int_0^a K_j(s,b) \d{s} &= \log(eb)^j + \int_1^a \log\Big( \frac{b}{s} \Big)^j \d{s} \nonumber\\
&= \log(eb)^j  + b\int_{b/a}^b \frac{\log(\tau)^j}{\tau^2} \d{\tau}. \label{prop:iterated_operators_closed_form:6}
\end{align}
As for the second term, we have
\begin{equation}
b\int_{b/a}^b \frac{\log(\tau)^j}{\tau^2} \d{\tau} \leq b\int_{b/a}^\infty \frac{\log(\tau)^j}{\tau^2} \d{\tau} \lesssim a\log\Big( \frac{2b}{a} \Big)^j = aK_j(a,2b)
\end{equation}
for all $1<a\leq b$. Furthermore, using \eqref{prop:properties_of_iterated_operators_monotonicity}, we obtain
\begin{equation}\label{prop:iterated_operators_closed_form:8}
\log(eb)^j \approx \log(e^jb)^j \leq a\log\Big( e^j\frac{b}{a} \Big)^j \approx a\log\Big( \frac{2b}{a} \Big)^j = aK_j(a,2b)
\end{equation}
for all $1<a\leq b$. Hence, the second inequality in \eqref{prop:iterated_operators_closed_form:E4} follows from \eqref{prop:iterated_operators_closed_form:5}--\eqref{prop:iterated_operators_closed_form:8}. It remains for us to prove the first inequality in \eqref{prop:iterated_operators_closed_form:E4}. In view of \eqref{prop:iterated_operators_closed_form:5}, we only need to prove it for $1<a\leq b$. To this end, note that
\begin{equation*}
\log(eb)^j \geq \log\Big( \frac{b}{a} \Big)^j \quad \text{for all $1<a\leq b$}.
\end{equation*}
Hence, using \eqref{prop:iterated_operators_closed_form:6}, we obtain
\begin{align*}
\int_0^a K_j(s,b) \d{s} &\geq \log(eb)^j + b\log\Big( \frac{b}{a} \Big)^j \int_{b/a}^b \frac1{\tau^2} \d{\tau} = \log(eb)^j + b\log\Big( \frac{b}{a} \Big)^j \frac{a-1}{b} \\
&\geq a\log\Big( \frac{b}{a} \Big)^j
\end{align*}
for all $1<a\leq b$. It follows that the first inequality in \eqref{prop:iterated_operators_closed_form:E4} is true.

Having \eqref{prop:iterated_operators_closed_form:E1}--\eqref{prop:iterated_operators_closed_form:E4} at our disposal, we now establish the desired equivalences. We start with \eqref{prop:iterated_operators_closed_form:E-RR}. On the one hand, using \eqref{prop:properties_of_iterated_operators_Ri_R_jk} and \eqref{prop:iterated_operators_closed_form:E2}, we obtain
\begin{align}
\|(R_\beta \circ R_\alpha^j)f\|_{X(0, \infty)} &\approx \Big\| \phi_\beta(t) \int_0^t f(s) \Big( \int_{s}^t \phi_\alpha \Big)^j \d{s} \Big\|_{X(0, \infty)} \nonumber\\
&\lesssim \Big\| \phi_\beta(t)t\phi_{j\alpha}(t) \int_0^t f(s) K_j(s,t) \d{s} \Big\|_{X(0, \infty)} \nonumber\\
&= \Big\| \phi_{\beta+j\alpha}(t) \int_0^t f(s) K_j(s,t) \d{s} \Big\|_{X(0, \infty)} \label{prop:iterated_operators_closed_form:9}
\end{align}
for every $f\in\Mpl(0, \infty)$. On the other hand, using \eqref{prop:properties_of_iterated_operators_Ri_R_jk}, \eqref{prop:iterated_operators_closed_form:E3}, and \eqref{prel:ri:dilation}, we have
\begin{align}
\|(R_\beta \circ R_\alpha^j)f\|_{X(0, \infty)} &\gtrsim \Big\| \phi_\beta\Big( \frac{t}{2} \Big) \int_0^{t/2} f(s) \Big( \int_{s}^t \phi_\alpha \Big)^j \d{s} \Big\|_{X(0, \infty)} \nonumber\\
&\gtrsim \Big\| \phi_{\beta+j\alpha}\Big( \frac{t}{2} \Big) \int_0^{t/2} f(s) K_j\Big( s, \frac{t}{2} \Big) \d{s} \Big\|_{X(0, \infty)} \nonumber\\
&\approx \Big\| \phi_{\beta+j\alpha}(t) \int_0^t f(s) K_j(s, t) \d{s} \Big\|_{X(0, \infty)} \label{prop:iterated_operators_closed_form:10}
\end{align}
for every $f\in\Mpl(0, \infty)$. Hence, combining \eqref{prop:iterated_operators_closed_form:9} and \eqref{prop:iterated_operators_closed_form:10}, we obtain \eqref{prop:iterated_operators_closed_form:E-RR}. The validity of \eqref{prop:iterated_operators_closed_form:E-HH} can be established in a similar way. On the one hand, thanks to \eqref{prop:properties_of_iterated_operators_Hjk_Hi} and \eqref{prop:iterated_operators_closed_form:E2}, we have
\begin{align*}
\|(H_\alpha^j \circ H_\beta)f\|_{X(0, \infty)} &\approx \Big\| \int_t^\infty f(s) \phi_\beta(s) \Big( \int_t^s \phi_\alpha \Big)^j \d{s} \Big\|_{X(0, \infty)} \\
&\lesssim \Big\| \int_t^\infty f(s) \phi_\beta(s) s \phi_{j\alpha}(s) K_j(t,s) \d{s} \Big\|_{X(0, \infty)} \\
&= \Big\| \int_t^\infty f(s) \phi_{\beta + j \alpha}(s) K_j(t,s) \d{s} \Big\|_{X(0, \infty)}
\end{align*}
for every $f\in\Mpl(0, \infty)$. On the other hand, using \eqref{prop:properties_of_iterated_operators_Hjk_Hi}, \eqref{prop:iterated_operators_closed_form:E3}, \eqref{prop:iterated_operators_closed_form:E1}, and \eqref{prel:ri:dilation}, we obtain
\begin{align*}
\|(H_\alpha^j \circ H_\beta)f\|_{X(0, \infty)} &\gtrsim \Big\| \int_{2t}^\infty f(s) \phi_\beta(s) \Big( \int_t^s \phi_\alpha \Big)^j \d{s} \Big\|_{X(0, \infty)}\\
&\gtrsim \Big\| \int_{2t}^\infty f(s) \phi_{\beta + j\alpha}(s) K_j\Big( t, \frac{s}{2} \Big) \d{s} \Big\|_{X(0, \infty)}\\
&\gtrsim \Big\| \int_{2t}^\infty f(s) \phi_{\beta + j\alpha}(s) K_j\Big( 2t, s \Big) \d{s} \Big\|_{X(0, \infty)} \\
&\approx \Big\| \int_{t}^\infty f(s) \phi_{\beta + j\alpha}(s) K_j(t,s) \d{s} \Big\|_{X(0, \infty)}
\end{align*}
for every $f\in\Mpl(0, \infty)$, which completes the proof of \eqref{prop:iterated_operators_closed_form:E-HH}.

Next, we prove \eqref{prop:iterated_operators_closed_form:E-RH}. For all $f\in\Mpl(0, \infty)$ and $t\in(0, \infty)$, we have
\begin{align}
(R_\beta \circ H_\alpha^{j+1})f(t) &\approx \phi_\beta(t) \int_0^t \int_s^\infty f(\tau) \phi_\alpha(\tau) \Big( \int_s^\tau \phi_\alpha \Big)^j \d{\tau} \d{s} \nonumber\\
&= \phi_\beta(t) \int_0^t \int_s^t f(\tau) \phi_\alpha(\tau) \Big( \int_s^\tau \phi_\alpha \Big)^j \d{\tau} \d{s} \nonumber\\
&\quad+ \phi_\beta(t) \int_0^t \int_t^\infty f(\tau) \phi_\alpha(\tau) \Big( \int_s^\tau \phi_\alpha \Big)^j \d{\tau} \d{s} \label{prop:iterated_operators_closed_form:11}
\end{align}
thanks to \eqref{prop:properties_of_iterated_operators_Hjk_Hi}. For future reference, note that, thanks to \eqref{prop:iterated_operators_closed_form:E2} and \eqref{prop:iterated_operators_closed_form:E4},
\begin{equation}\label{prop:iterated_operators_closed_form:12}
\int_0^a \Big( \int_s^b \phi_\alpha \Big)^j \d{s} \lesssim b\phi_{j\alpha}(b) \int_0^a K_j(s,b) \d{s} \lesssim a b\phi_{j\alpha}(b) K_j(a,2b)
\end{equation}
for all $0<a\leq b$ and that, thanks to \eqref{prop:iterated_operators_closed_form:E3} and \eqref{prop:iterated_operators_closed_form:E4},
\begin{equation}\label{prop:iterated_operators_closed_form:13}
\int_0^a \Big( \int_s^b \phi_\alpha \Big)^j \d{s} \gtrsim b\phi_{j\alpha}(b) \int_0^a K_j\Big( s, \frac{b}{2} \Big) \d{s} \gtrsim a b\phi_{j\alpha}(b) K_j\Big( a, \frac{b}{2} \Big)
\end{equation}
for all $0<a\leq \frac{b}{2}$. Now, using the Fubini theorem, \eqref{prop:iterated_operators_closed_form:12}, and \eqref{prop:iterated_operators_closed_form:E5}, we obtain
\begin{align}
\int_0^t \int_s^t f(\tau) \phi_\alpha(\tau) \Big( \int_s^\tau \phi_\alpha \Big)^j \d{\tau} \d{s} &= \int_0^t f(\tau) \phi_\alpha(\tau) \int_0^\tau \Big( \int_s^\tau \phi_\alpha \Big)^j \d{s} \d{\tau} \nonumber\\
&\lesssim \int_0^t f(\tau) \phi_\alpha(\tau) \tau^2 \phi_{j\alpha}(\tau) \d{\tau} \nonumber\\
&= \int_0^t f(\tau) \tau \phi_{(j+1)\alpha}(\tau) \d{\tau} \label{prop:iterated_operators_closed_form:14}
\end{align}
for all $f\in\Mpl(0, \infty)$ and $t\in(0, \infty)$. Furthermore, using the Fubini theorem, \eqref{prop:iterated_operators_closed_form:13}, and \eqref{prop:iterated_operators_closed_form:E5}, we obtain
\begin{align}
\int_0^t \int_s^t f(\tau) \phi_\alpha(\tau) \Big( \int_s^\tau \phi_\alpha \Big)^j \d{\tau} \d{s} &= \int_0^t f(\tau) \phi_\alpha(\tau) \int_0^\tau \Big( \int_s^\tau \phi_\alpha \Big)^j \d{s} \d{\tau} \nonumber\\
&\geq \int_0^t f(\tau) \phi_\alpha(\tau) \int_0^{\tau/4} \Big( \int_s^\tau \phi_\alpha \Big)^j \d{s} \d{\tau} \nonumber\\
&\gtrsim \int_0^t f(\tau) \tau \phi_{(j+1)\alpha}(\tau) K_j\Big(\frac{\tau}{4}, \frac{\tau}{2}\Big) \d{\tau} \nonumber\\
&\approx \int_0^t f(\tau) \tau \phi_{(j+1)\alpha}(\tau) \d{\tau} \label{prop:iterated_operators_closed_form:15}
\end{align}
for all $f\in\Mpl(0, \infty)$ and $t\in(0, \infty)$. Hence, it follows from \eqref{prop:iterated_operators_closed_form:14} and \eqref{prop:iterated_operators_closed_form:15} that
\begin{align}
&\Big\| \phi_\beta(t) \int_0^t \int_s^t f(\tau) \phi_\alpha(\tau) \Big( \int_s^\tau \phi_\alpha \Big)^j \d{\tau} \d{s}  \Big\|_{X(0,\infty)} \nonumber\\
&\approx \Big\| \phi_\beta(t) \int_0^t f(s) s \phi_{(j+1)\alpha}(s) \d{s}  \Big\|_{X(0,\infty)} \label{prop:iterated_operators_closed_form:16}
\end{align}
for every $f\in\Mpl(0, \infty)$. Furthermore, by Fubini theorem, we have
\begin{equation}\label{prop:iterated_operators_closed_form:17}
\int_0^t \int_t^\infty f(\tau) \phi_\alpha(\tau) \Big( \int_s^\tau \phi_\alpha \Big)^j \d{\tau} \d{s} = \int_t^\infty f(\tau) \phi_\alpha(\tau) \int_0^t \Big( \int_s^\tau \phi_\alpha \Big)^j \d{s} \d{\tau}
\end{equation}
for all $f\in\Mpl(0, \infty)$ and $t\in(0, \infty)$. On the one hand, using \eqref{prop:iterated_operators_closed_form:12}, \eqref{prop:iterated_operators_closed_form:E1}, and \eqref{prel:ri:dilation},
we have
\begingroup
\allowdisplaybreaks
\begin{align}
&\Big\| \phi_\beta(t) \int_t^\infty f(\tau) \phi_\alpha(\tau) \int_0^t \Big( \int_s^\tau \phi_\alpha \Big)^j \d{s} \d{\tau} \Big\|_{X(0, \infty)} \nonumber\\
&\lesssim \Big\| \phi_\beta(t) t \int_t^\infty f(\tau) \phi_{(j+1)\alpha}(\tau) K_j(t, 2\tau) \d{\tau} \Big\|_{X(0, \infty)} \nonumber\\
&\lesssim \Big\| \phi_\beta(t) t \int_t^\infty f(\tau) \phi_{(j+1)\alpha}(\tau) K_j\Big( \frac{t}{2}, \tau \Big) \d{\tau} \Big\|_{X(0, \infty)} \nonumber\\
&\lesssim \Big\| \phi_\beta\Big( \frac{t}{2} \Big) \frac{t}{2} \int_{t/2}^\infty f(\tau) \phi_{(j+1)\alpha}(\tau) K_j\Big( \frac{t}{2}, \tau \Big) \d{\tau} \Big\|_{X(0, \infty)} \nonumber\\
&\approx \Big\| \phi_\beta(t) t \int_t^\infty f(\tau) \phi_{(j+1)\alpha}(\tau) K_j(t, \tau) \d{\tau} \Big\|_{X(0, \infty)} \label{prop:iterated_operators_closed_form:18}
\end{align}
for every $f\in\Mpl(0, \infty)$. On the one hand, using \eqref{prop:iterated_operators_closed_form:13}, \eqref{prop:iterated_operators_closed_form:E1}, and \eqref{prel:ri:dilation}, we have
\begin{align}
&\Big\| \phi_\beta(t) \int_t^\infty f(\tau) \phi_\alpha(\tau) \int_0^t \Big( \int_s^\tau \phi_\alpha \Big)^j \d{s} \d{\tau} \Big\|_{X(0, \infty)} \nonumber\\
&\geq \Big\| \phi_\beta(t) \int_{2t}^\infty f(\tau) \phi_\alpha(\tau) \int_0^t \Big( \int_s^\tau \phi_\alpha \Big)^j \d{s} \d{\tau} \Big\|_{X(0, \infty)} \nonumber\\
&\gtrsim \Big\| \phi_\beta(t) t \int_{2t}^\infty f(\tau) \phi_{(j+1)\alpha}(\tau) K_j\Big( t, \frac{\tau}{2} \Big) \d{\tau} \Big\|_{X(0, \infty)} \nonumber\\
&\gtrsim \Big\| \phi_\beta(t) t \int_{2t}^\infty f(\tau) \phi_{(j+1)\alpha}(\tau) K_j(2t, \tau) \d{\tau} \Big\|_{X(0, \infty)} \nonumber\\
&\approx \Big\| \phi_\beta(t) t \int_t^\infty f(\tau) \phi_{(j+1)\alpha}(\tau) K_j(t, \tau) \d{\tau} \Big\|_{X(0, \infty)} \label{prop:iterated_operators_closed_form:19}
\end{align}
\endgroup
for every $f\in\Mpl(0, \infty)$. Hence, combining \eqref{prop:iterated_operators_closed_form:17}, \eqref{prop:iterated_operators_closed_form:18}, and \eqref{prop:iterated_operators_closed_form:19}, we arrive at
\begin{align}
&\Big\| \phi_\beta(t) \int_0^t \int_t^\infty f(\tau) \phi_\alpha(\tau) \Big( \int_s^\tau \phi_\alpha \Big)^j \d{\tau} \d{s} \Big\|_{X(0, \infty)} \nonumber\\
&\approx \Big\| \phi_\beta(t) t \int_t^\infty f(\tau) \phi_{(j+1)\alpha}(\tau) K_j(t, \tau) \d{\tau} \Big\|_{X(0, \infty)} \label{prop:iterated_operators_closed_form:20}
\end{align}
for every $f\in\Mpl(0, \infty)$. Therefore, \eqref{prop:iterated_operators_closed_form:E-RH} now follows from \eqref{prop:iterated_operators_closed_form:11}, \eqref{prop:iterated_operators_closed_form:16}, and \eqref{prop:iterated_operators_closed_form:20}.

Finally, we prove \eqref{prop:iterated_operators_closed_form:E-HPH}. In the rest of the proof, we assume that $\alpha\in (0, \frac{n}{j+1})$. Note that, thanks to \eqref{prop:properties_of_iterated_operators_(Hj_P)k},
\begin{align}
\|((H_\alpha \circ P)^{j+1}\circ H_\beta)f\|_{X(0, \infty)} &\approx \|(H_\alpha^{j+1} \circ P \circ H_\beta)f\|_{X(0, \infty)} \nonumber\\
&\quad+ \|(R_\alpha^{j+1}\circ H_\beta)f\|_{X(0, \infty)} \label{prop:iterated_operators_closed_form:27}
\end{align}
for every $f\in\Mpl(0, \infty)$. Note that, by the Fubini theorem,
\begin{align}
\int_0^t H_\beta f(s) K_j(s,t) \d{s} &= \int_0^t \int_s^t f(\tau) \phi_\beta(\tau) \d{\tau} K_j(s,t) \d{s} \nonumber\\
&\quad+ H_\beta f(t) \int_0^t K_j(s,t) \d{s} \label{prop:iterated_operators_closed_form:22}
\end{align}
for all $f\in\Mpl(0, \infty)$ and $t\in(0, \infty)$. Using \eqref{prop:iterated_operators_closed_form:E4} and \eqref{prop:iterated_operators_closed_form:E5}, we have
\begin{equation*}
H_\beta f(t) \int_0^t K_j(s,t) \d{s} \lesssim t H_\beta f(t) \quad \text{for all $f\in\Mpl(0, \infty)$ and $t\in(0, \infty)$}.
\end{equation*}
Using this, the fact that the function $(0,\infty)\ni t \mapsto t\phi_{(j+1)\alpha}(t)$ is nondecreasing, \eqref{prop:iterated_operators_closed_form:E5}, \eqref{prop:iterated_operators_closed_form:E1}, \eqref{E:kernel_K_monotonicity_second_var}, and \eqref{prel:ri:dilation}, we have
\begingroup
\allowdisplaybreaks
\begin{align}
\Big\|\phi_{(j+1)\alpha}(t) H_\beta f(t) \int_0^t K_j(s,t) \d{s} \Big\|_{X(0, \infty)} &\lesssim \Big\|\phi_{(j+1)\alpha}(t) t \int_t^\infty f(s) \phi_\beta(s) \d{s} \Big\|_{X(0, \infty)} \nonumber\\
&\leq \Big\|\int_t^\infty f(s) \phi_{\beta + (j+1)\alpha}(s) \d{s} \Big\|_{X(0, \infty)} \nonumber\\
&\lesssim \Big\|\int_t^\infty f(s) \phi_{\beta + (j+1)\alpha}(s) K_{j+1}\Big( \frac{t}{2}, t \Big) \d{s} \Big\|_{X(0, \infty)} \nonumber\\
&\leq \Big\|\int_{t/2}^\infty f(s) \phi_{\beta + (j+1)\alpha}(s) K_{j+1}\Big( \frac{t}{2}, s \Big) \d{s} \Big\|_{X(0, \infty)} \nonumber\\
&\approx \Big\|\int_t^\infty f(s) \phi_{\beta + (j+1)\alpha}(s) K_{j+1}(t, s) \d{s} \Big\|_{X(0, \infty)} \label{prop:iterated_operators_closed_form:23}
\end{align}
for every $f\in\Mpl(0, \infty)$. Furthermore, we also have
\begin{align}
&\Big\|\phi_{(j+1)\alpha}(t) \int_0^t \int_s^t f(\tau) \phi_\beta(\tau) \d{\tau} K_j(s,t) \d{s} \Big\|_{X(0, \infty)} \nonumber\\
&= \Big\|\phi_{(j+1)\alpha}(t) \int_0^t f(\tau) \phi_\beta(\tau) \int_0^\tau K_j(s,t) \d{s} \d{\tau}\Big\|_{X(0, \infty)} \nonumber\\
&\lesssim \Big\|\phi_{(j+1)\alpha}(t) \int_0^t f(\tau) \phi_\beta(\tau) \tau K_j(\tau, 2t) \d{\tau}\Big\|_{X(0, \infty)} \nonumber\\
&\lesssim \Big\|\phi_{(j+1)\alpha}(2t) \int_0^{2t} f(\tau) \phi_\beta(\tau) \tau K_j(\tau, 2t) \d{\tau}\Big\|_{X(0, \infty)} \nonumber\\
&\approx \Big\|\phi_{(j+1)\alpha}(t) \int_0^{t} f(\tau) \phi_\beta(\tau) \tau K_j(\tau, t) \d{\tau}\Big\|_{X(0, \infty)} \label{prop:iterated_operators_closed_form:24}
\end{align}
\endgroup
for every $f\in\Mpl(0, \infty)$, thanks to the Fubini theorem, \eqref{prop:iterated_operators_closed_form:E4}, and \eqref{prel:ri:dilation}. Hence, combining \eqref{prop:iterated_operators_closed_form:E-RR}, \eqref{prop:iterated_operators_closed_form:22}, \eqref{prop:iterated_operators_closed_form:23}, and \eqref{prop:iterated_operators_closed_form:24}, we obtain
\begin{align}
\|(R_\alpha^{j+1}\circ H_\beta)f\|_{X(0, \infty)} &\lesssim \Big\|\int_t^\infty f(s) \phi_{\beta + (j+1)\alpha}(s) K_{j+1}(t, s) \d{s} \Big\|_{X(0, \infty)} \nonumber\\
&\quad+ \Big\|\phi_{(j+1)\alpha}(t) \int_0^{t} f(\tau) \phi_\beta(\tau) \tau K_j(\tau, t) \d{\tau}\Big\|_{X(0, \infty)} \label{prop:iterated_operators_closed_form:25}
\end{align}
for every $f\in\Mpl(0, \infty)$. On the other hand, using \eqref{prop:iterated_operators_closed_form:E-RR}, \eqref{prop:iterated_operators_closed_form:22}, the Fubini theorem, and \eqref{prop:iterated_operators_closed_form:E4}, we also have
\begin{align}
\|(R_\alpha^{j+1}\circ H_\beta)f\|_{X(0, \infty)} &\gtrsim \Big\|\phi_{(j+1)\alpha}(t) \int_0^t f(\tau) \phi_\beta(\tau) \int_0^\tau K_j(s,t) \d{s} \d{\tau}\Big\|_{X(0, \infty)} \nonumber\\
&\gtrsim \Big\|\phi_{(j+1)\alpha}(t) \int_0^{t} f(\tau) \phi_\beta(\tau) \tau K_j(\tau, t) \d{\tau}\Big\|_{X(0, \infty)} \label{prop:iterated_operators_closed_form:26}
\end{align}
for every $f\in\Mpl(0, \infty)$. We now turn our attention to the first term on the right-hand side of \eqref{prop:iterated_operators_closed_form:27}. Note that
\begin{equation}\label{prop:iterated_operators_closed_form:28}
(P \circ H_\beta)f(t) = H_\beta f(t) + \frac1{t}\int_0^t f(s) \phi_\beta(s) s \d{s} 
\end{equation}
for all $f\in\Mpl(0, \infty)$ and $t\in(0, \infty)$ thanks to the Fubini theorem. By \eqref{prop:iterated_operators_closed_form:E-HH} and the Fubini theorem, we have
\begingroup
\allowdisplaybreaks
\begin{align}
&\Big\|  H_\alpha^{j+1} \Big( \frac1{\cdot}\int_0^{\cdot} f(\tau) \phi_\beta(\tau) \tau \d{\tau} \Big)  \Big\|_{X(0, \infty)} \nonumber\\
&\approx \Big\| \int_t^\infty \Big( \frac1{s}\int_0^s f(\tau) \phi_\beta(\tau) \tau \d{\tau} \Big) \phi_{(j+1)\alpha}(s) K_j(t,s) \d{s} \Big\|_{X(0, \infty)} \nonumber\\
&\approx \Big\| \Big( \int_0^t f(\tau) \phi_\beta(\tau) \tau \d{\tau} \Big) \int_t^\infty \frac{\phi_{(j+1)\alpha}(s)}{s} K_j(t,s) \d{s} \Big\|_{X(0, \infty)} \nonumber\\
&\quad+ \Big\| \int_t^\infty \Big( \frac1{s}\int_t^s f(\tau) \phi_\beta(\tau) \tau \d{\tau} \Big) \phi_{(j+1)\alpha}(s) K_j(t,s) \d{s} \Big\|_{X(0, \infty)} \nonumber\\
&= \Big\| \Big( \int_0^t f(\tau) \phi_\beta(\tau) \tau \d{\tau} \Big) \int_t^\infty \frac{\phi_{(j+1)\alpha}(s)}{s} K_j(t,s) \d{s} \Big\|_{X(0, \infty)} \nonumber\\
&\quad+ \Big\| \int_t^\infty f(\tau) \phi_\beta(\tau) \tau \Big( \int_\tau^\infty \frac{\phi_{(j+1)\alpha}(s)}{s} K_j(t,s) \d{s} \Big) \d{\tau} \Big\|_{X(0, \infty)} \label{prop:iterated_operators_closed_form:29}
\end{align}
for every $f\in\Mpl(0, \infty)$. Furthermore, we claim that
\begin{equation}\label{prop:iterated_operators_closed_form:21}
\phi_{(j+1)\alpha}(b)K_j(a,b) \lesssim \int_b^\infty \frac{\phi_{(j+1)\alpha}(s)}{s} K_j(a,s)\d{s} \lesssim \phi_{(j+1)\alpha}(b)K_j(a,2b)
\end{equation}
for all $0<a\leq b$. For now, assume that we have already established \eqref{prop:iterated_operators_closed_form:21}. Now, using \eqref{prop:iterated_operators_closed_form:21}, \eqref{prop:iterated_operators_closed_form:E5} twice, \eqref{E:kernel_K_monotonicity_second_var}, and \eqref{prel:ri:dilation}, we obtain
\begin{align}
&\Big\| \Big( \int_0^t f(\tau) \phi_\beta(\tau) \tau \d{\tau} \Big) \int_t^\infty \frac{\phi_{(j+1)\alpha}(s)}{s} K_j(t,s) \d{s} \Big\|_{X(0, \infty)} \nonumber\\
&\lesssim \Big\| \phi_{(j+1)\alpha}(t)K_j(t,2t) \int_0^t f(\tau) \phi_\beta(\tau) \tau \d{\tau} \Big\|_{X(0, \infty)} \nonumber\\
&\approx \Big\| \phi_{(j+1)\alpha}(t) \int_0^t f(\tau) \phi_\beta(\tau) \tau K_j(\tau,2\tau) \d{\tau} \Big\|_{X(0, \infty)} \nonumber\\
&\lesssim \Big\| \phi_{(j+1)\alpha}(2t) \int_0^{2t} f(\tau) \phi_\beta(\tau) \tau K_j(\tau,2t) \d{\tau} \Big\|_{X(0, \infty)} \nonumber\\
&\approx \Big\| \phi_{(j+1)\alpha}(t) \int_0^t f(\tau) \phi_\beta(\tau) \tau K_j(\tau,t) \d{\tau} \Big\|_{X(0, \infty)} \label{prop:iterated_operators_closed_form:30}
\end{align}
for every $f\in\Mpl(0, \infty)$. Furthermore, using \eqref{prop:iterated_operators_closed_form:21}, \eqref{prop:iterated_operators_closed_form:E1}, \eqref{prop:iterated_operators_closed_form:E6}, and \eqref{prel:ri:dilation}, we also have
\begin{align}
&\Big\| \int_t^\infty f(\tau) \phi_\beta(\tau) \tau \Big( \int_\tau^\infty \frac{\phi_{(j+1)\alpha}(s)}{s} K_j(t,s) \d{s} \Big) \d{\tau} \Big\|_{X(0, \infty)} \nonumber\\
&\lesssim \Big\| \int_t^\infty f(\tau) \phi_{\beta + (j+1)\alpha}(\tau) K_j(t,2\tau) \d{\tau} \Big\|_{X(0, \infty)} \nonumber\\
&\lesssim \Big\| \int_t^\infty f(\tau) \phi_{\beta + (j+1)\alpha}(\tau) K_j\Big( \frac{t}{2}, \tau \Big) \d{\tau} \Big\|_{X(0, \infty)} \nonumber\\
&\lesssim \Big\| \int_t^\infty f(\tau) \phi_{\beta + (j+1)\alpha}(\tau) K_{j+1}\Big( \frac{t}{2}, \tau \Big) \d{\tau} \Big\|_{X(0, \infty)} \nonumber\\
&\leq \Big\| \int_{t/2}^\infty f(\tau) \phi_{\beta + (j+1)\alpha}(\tau) K_{j+1}\Big( \frac{t}{2}, \tau \Big) \d{\tau} \Big\|_{X(0, \infty)} \nonumber\\
&\approx \Big\| \int_t^\infty f(\tau) \phi_{\beta + (j+1)\alpha}(\tau) K_{j+1}(t, \tau) \d{\tau} \Big\|_{X(0, \infty)} \label{prop:iterated_operators_closed_form:32}
\end{align}
\endgroup
for every $f\in\Mpl(0, \infty)$. Hence, combining \eqref{prop:iterated_operators_closed_form:28}, \eqref{prop:iterated_operators_closed_form:E-HH}, \eqref{prop:iterated_operators_closed_form:29}, \eqref{prop:iterated_operators_closed_form:30}, and \eqref{prop:iterated_operators_closed_form:32}, we obtain
\begin{align}
\|(H_\alpha^{j+1} \circ P \circ H_\beta)f\|_{X(0, \infty)} &\lesssim \Big\| \phi_{(j+1)\alpha}(t) \int_0^t f(\tau) \phi_\beta(\tau) \tau K_j(\tau,t) \d{\tau} \Big\|_{X(0, \infty)} \nonumber\\
&\quad+ \Big\| \int_t^\infty f(\tau) \phi_{\beta + (j+1)\alpha}(\tau) K_{j+1}(t, \tau) \d{\tau} \Big\|_{X(0, \infty)} \label{prop:iterated_operators_closed_form:33}
\end{align}
for every $f\in\Mpl(0, \infty)$. On the other hand, using \eqref{prop:iterated_operators_closed_form:28} and \eqref{prop:iterated_operators_closed_form:E-HH}, we have
\begin{align}
\|(H_\alpha^{j+1} \circ P \circ H_\beta)f\|_{X(0, \infty)} &\geq \|(H_\alpha^{j+1} \circ H_\beta)f\|_{X(0, \infty)} \nonumber\\
&\approx \Big\| \int_t^\infty f(s) \phi_{\beta + (j+1)\alpha}(s) K_{j+1}(t, s) \d{s} \Big\|_{X(0, \infty)} \label{prop:iterated_operators_closed_form:34}
\end{align}
for every $f\in\Mpl(0, \infty)$. Finally, combining \eqref{prop:iterated_operators_closed_form:27}, \eqref{prop:iterated_operators_closed_form:25}, \eqref{prop:iterated_operators_closed_form:33}, \eqref{prop:iterated_operators_closed_form:26}, and \eqref{prop:iterated_operators_closed_form:34}, we obtain \eqref{prop:iterated_operators_closed_form:E-HPH}. Therefore, the entire proof will be finished once we establish \eqref{prop:iterated_operators_closed_form:21}. First, assume that $0<a\leq1$. For every $b\geq 1$, we have
\begin{align}
\int_b^\infty \frac{\phi_{(j+1)\alpha}(s)}{s} K_j(a,s)\d{s} &= \int_b^\infty \frac{\log(e s)^j}{s^2}\d{s} \approx \frac{\log(e b)^j}{b} \nonumber\\
&=  \phi_{(j+1)\alpha}(b) K_j(a,b). \label{prop:iterated_operators_closed_form:35}
\end{align}
If $0<a\leq b\leq1$, then
\begin{align}
\int_b^\infty \frac{\phi_{(j+1)\alpha}(s)}{s} K_j(a,s)\d{s} &= \int_b^1 s^{-2 + \frac{(j+1)\alpha}{n}}\d{s} + \int_1^\infty \frac{\log(e s)^j}{s^2}\d{s} \nonumber\\
&\approx b^{-1 + \frac{(j+1)\alpha}{n}} = \phi_{(j+1)\alpha}(b) K_j(a,b). \label{prop:iterated_operators_closed_form:36}
\end{align}
Note that here we use the assumption that $\alpha\in (0, \frac{n}{j+1})$. Combining \eqref{prop:iterated_operators_closed_form:35} and \eqref{prop:iterated_operators_closed_form:36} and using \eqref{E:kernel_K_monotonicity_second_var}, we obtain the validity of \eqref{prop:iterated_operators_closed_form:21} for all $0<a\leq \min\{1,b\}$. At last, let $1<a\leq b$. Note that
\begin{equation}\label{prop:iterated_operators_closed_form:37}
\int_b^\infty \frac{\phi_{(j+1)\alpha}(s)}{s} K_j(a,s)\d{s} = \int_b^\infty \frac{\log(\frac{s}{a})^j}{s^2}\d{s} = \frac1{a} \int_{b/a}^\infty \frac{\log(\tau)^j}{\tau^2}\d{\tau}.
\end{equation}
Now, on the one hand, we have
\begin{equation}\label{prop:iterated_operators_closed_form:38}
\frac1{a} \int_{b/a}^\infty \frac{\log(\tau)^j}{\tau^2}\d{\tau} \leq \frac1{a} \int_{b/a}^\infty \frac{\log(2\tau)^j}{\tau^2}\d{\tau} \approx \frac{\log(\frac{2b}{a})}{b} = \phi_{(j+1)\alpha}(b) K_j(a,2b).
\end{equation}
On the other hand, we have
\begin{equation}\label{prop:iterated_operators_closed_form:39}
\frac1{a} \int_{b/a}^\infty \frac{\log(\tau)^j}{\tau^2}\d{\tau} \geq \frac{\log(\frac{b}{a})}{a} \int_{b/a}^\infty \frac1{\tau^2}\d{\tau} \approx \frac{\log(\frac{b}{a})}{b} = \phi_{(j+1)\alpha}(b) K_j(a,b).
\end{equation}
By combining \eqref{prop:iterated_operators_closed_form:37} with \eqref{prop:iterated_operators_closed_form:38} and \eqref{prop:iterated_operators_closed_form:39}, we establish the validity of \eqref{prop:iterated_operators_closed_form:21} also for all $1 < a\leq b$, which concludes the proof.
\end{proof}

The following proposition characterizes when certain functionals are rearrangement\hyp{}invariant function norms.
\begin{proposition}\label{prop:operator_induced_norms}
Let $\|\cdot\|_{X(0, \infty)}$ be a rearrangement\hyp{}invariant function norm, $m < n$, $m\in\N$. Set $k = \lceil m/2 - 1 \rceil$.
Set
\begin{align*}
\nu_{m, X}(f) &=\begin{cases}
	\|(R_1\circ (H_2\circ P)^k) f^*\|_{X(0, \infty)}\quad &\text{if $m$ is odd}, \\
	\|(H_2\circ P)^{k+1} f^*\|_{X(0, \infty)}\quad &\text{if $m$ is even},
\end{cases}\\
\sigma_{m, X}(f) &= \|(R_\beta\circ R_2^k) f^*\|_{X(0, \infty)}, \\
\lambda_{m, X}(f) &= \|R_m f^*\|_{X(0, \infty)}, \\
\mu_{m, X}(f) &=\begin{cases}
	\|(R_1\circ H_2^k) f^{**}\|_{X(0, \infty)}\quad &\text{if $m$ is odd}, \\
	\|H_2^{k+1} f^{**}\|_{X(0, \infty)}\quad &\text{if $m$ is even},
\end{cases}\\
\end{align*}
for $f\in\Mpl(0, \infty)$, where $\beta$ is defined by \eqref{thm:optimal_target_general_theorem_beta}.
\begin{enumerate}[label=(\roman*), ref=(\roman*)]
\item The functionals $\nu_{m, X}$ and $\sigma_{m, X}$ are rearrangement\hyp{}invariant function norms if and only if
\begin{equation}\label{prop:operator_induced_norms_nu_sigma_condition}
\frac{(1 + \log t)^k}{t}\chi_{(1, \infty)}(t) \in X(0, \infty).
\end{equation}
\item The functional $\lambda_{m, X}$ is a rearrangement\hyp{}invariant function norm if and only if
\begin{equation}\label{prop:operator_induced_norms_lambda_condition}
\frac1{t}\chi_{(1, \infty)}(t) \in X(0, \infty).
\end{equation}
\item When $m$ is odd, the functional $\mu_{m, X}$ is a rearrangement\hyp{}invariant function norm if and only if \eqref{prop:operator_induced_norms_nu_sigma_condition} with $k = 1$ is satisfied. When $m$ is even, the functional is a rearrangement\hyp{}invariant function norm if and only if \eqref{prop:operator_induced_norms_lambda_condition} is satisfied. 
\end{enumerate}
\end{proposition}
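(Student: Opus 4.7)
The plan is to verify the six defining properties (P1)--(P6) of a rearrangement-invariant function norm for each of the four functionals, and to show that the stated conditions are precisely equivalent to the finiteness axiom (P4). Properties (P1)--(P3) and (P6) will hold without any extra assumption on $X$: (P6) is immediate since each functional is built from $f^*$ or $f^{**}$; the triangle inequality in (P1) follows by combining linearity of $R_\alpha$, $H_\alpha$, $P$ with the subadditivity $\int_0^t(f+g)^*\leq\int_0^t f^*+\int_0^t g^*$ from \eqref{prel:ri:subadditivityofdoublestar} (which yields $R_\alpha(f+g)^*\leq R_\alpha f^*+R_\alpha g^*$ and its iterates) for $\nu$, $\sigma$, $\lambda$, and by the subadditivity of $f\mapsto f^{**}$ for $\mu$; (P2) and (P3) follow from positivity of all operators involved, the Monotone Convergence Theorem applied inside the integrals defining $R_\alpha$, $H_\alpha$, $P$, and the corresponding properties of $\|\cdot\|_{X(0,\infty)}$.

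The crux is (P4), which by (P6) and positive homogeneity reduces to verifying finiteness of each functional on $\chi_{(0,1)}$. For $\lambda_{m,X}$, a direct computation gives $R_m\chi_{(0,1)}(t)=\phi_m(t)\min(t,1)$, which is bounded on $(0,1]$ and equals $1/t$ on $[1,\infty)$, so finiteness is equivalent to \eqref{prop:operator_induced_norms_lambda_condition}. For $\sigma_{m,X}$, I would use \eqref{prop:properties_of_iterated_operators_Ri_R_jk} together with the explicit primitive \eqref{prop:properties_of_iterated_operators_kernel_phi_alpha_explicit} to show inductively that $R_2^{k+1}\chi_{(0,1)}(t)\approx(\log t)^k/t$ at infinity and is bounded near zero; the additional action of $R_\beta$ then produces the profile $(1+\log t)^k/t$ at infinity, yielding \eqref{prop:operator_induced_norms_nu_sigma_condition}. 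For $\nu_{m,X}$, the equivalences \eqref{prop:properties_of_iterated_operators_(Hj_P)k} and \eqref{prop:properties_of_iterated_operators_Ri_(Hj_P)k} reduce the asymptotic analysis to $R_2^{k+1}\chi_{(0,1)}$ (handled as above) plus a term coming from $H_2^{k+1}\circ P\chi_{(0,1)}$ of order $1/t$ at infinity, which is absorbed by the first term for $k\geq 1$ and coincides with it for $k=0$; hence condition \eqref{prop:operator_induced_norms_nu_sigma_condition} is again necessary and sufficient. For $\mu_{m,X}$ the decisive observation is that the profile $1/t$ is essentially a fixed point of $H_2$ on $[1,\infty)$, so iterating $H_2$ on $\chi_{(0,1)}^{**}(t)=\min(1,1/t)$ any number of times produces a function still comparable to $1/t$ at infinity; consequently, in the even case the condition is exactly \eqref{prop:operator_induced_norms_lambda_condition}, whereas in the odd case the single application of $R_1$ introduces exactly one logarithmic factor, yielding the $k=1$ version of \eqref{prop:operator_induced_norms_nu_sigma_condition} regardless of the original value of~$k$.

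Property (P5) will follow from a pointwise lower estimate: for each functional one can locate $0<a<b<\infty$ and $c>0$ depending on $|E|$ such that the function under the $X$-norm dominates $c\int_0^{|E|}f^*$ throughout $[a,b]$, and then (P5) for $X$ combined with \eqref{prel:ri:HLg=chiE} closes the argument. The necessity of each stated condition follows at once by taking $E=(0,1)$ in (P4), since failure of the condition forces the functional to be infinite on $\chi_{(0,1)}$. I expect the main obstacle to be the inductive asymptotic analysis for $\nu_{m,X}$ and $\mu_{m,X}$, particularly the interplay between the logarithmic accumulation produced by $P$ and by $R_\alpha$ on one hand, and the essentially ``neutral'' action of $H_2$ on $1/t$-profiles on the other; this mechanism is precisely what is responsible for the asymmetry between the conditions on $\mu_{m,X}$ and those on $\nu_{m,X}$ and $\sigma_{m,X}$.
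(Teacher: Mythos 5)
Your overall route is the same as the paper's: the only substantive axioms are (P4) and (P5), (P4) is settled by computing the asymptotic profiles of the operators applied to characteristic functions (via \eqref{prop:properties_of_iterated_operators_Ri_R_jk}, \eqref{prop:properties_of_iterated_operators_kernel_phi_alpha_explicit}, and the decompositions \eqref{prop:properties_of_iterated_operators_(Hj_P)k}, \eqref{prop:properties_of_iterated_operators_Ri_(Hj_P)k}), (P5) by pointwise lower bounds, and necessity by the matching lower bound on $\chi_{(0,1)}$; your key structural observations (the $R$-iterates accumulate one logarithm per application while $H_2$ leaves the $1/t$ profile essentially invariant, which is exactly why $\mu_{m,X}$ needs only the $k=1$, respectively $k=0$, condition) are correct and are what the paper's computations encode.

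There is, however, one concrete lapse: you claim that (P4) ``by (P6) and positive homogeneity reduces to verifying finiteness of each functional on $\chi_{(0,1)}$.'' Since $\chi_E^*=\chi_{(0,|E|)}$, (P4) requires finiteness on $\chi_{(0,a)}$ for \emph{every} $a>0$, and positive homogeneity gives no relation between $\chi_{(0,a)}$ and $\chi_{(0,1)}$; moreover, because the kernels $\phi_\alpha(t)=\min\{t^{-1+\alpha/n},t^{-1}\}$ are inhomogeneous (they break at $t=1$), the operators do not commute with dilations, so the case $a>1$ cannot be obtained from $a=1$ by rescaling. Monotonicity handles $a\leq1$, but for $a>1$ you must redo the computation: e.g.\ the paper shows $(H_2\circ P)^{k+1}\chi_{(0,a)}(t)\lesssim(1+\log a)^{k+1}\bigl(\chi_{(0,a)}(t)+a\,(1+\log(t/a))^k\,t^{-1}\chi_{[a,\infty)}(t)\bigr)$ and similarly for $R_\beta\circ(H_2\circ P)^k$, and then invokes the boundedness of the dilation operator $D_a$ on $X(0,\infty)$ (see \eqref{prel:ri:dilation}) to conclude that these profiles belong to $X(0,\infty)$ under the same conditions \eqref{prop:operator_induced_norms_nu_sigma_condition} or \eqref{prop:operator_induced_norms_lambda_condition}. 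This supplement is routine and uses exactly the computations you already outline, but without it your argument only proves finiteness on sets of measure at most one. A smaller point: $\sigma_{m,X}$ is built from $R_\beta\circ R_2^{k}$ (i.e.\ $k+1$ operators in total, with $R_\beta\circ R_2^k=R_2^{k+1}$ when $m$ is even), so the induction should be run on that composition rather than on ``$R_2^{k+1}$ followed by an additional $R_\beta$''; the final profile $(1+\log t)^k/t$ you state is nevertheless the right one.
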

\begin{proof}
For future reference, we start with two observations. First, note that, in view of \eqref{prop:properties_of_iterated_operators_(Hj_P)k} and \eqref{prop:properties_of_iterated_operators_Ri_(Hj_P)k}, we have
\begin{equation}\label{prop:operator_induced_norms_sigma_lesssim_nu}
\sigma_{m, X}(f)\lesssim \nu_{m, X}(f) \quad \text{for every $f\in\Mpl(0, \infty)$}, 
\end{equation}
in which the multiplicative constants depend only on $n$ and $m$. Second, we claim that
\begin{equation}\label{prop:operator_induced_norms_lambda_lesssim_sigma}
\lambda_{m, X}(f) \lesssim \sigma_{m, X}(f)  \quad \text{for every $f\in\Mpl(0, \infty)$},
\end{equation}
in which the multiplicative constants depend only on $n$ and $m$. Indeed, using \eqref{prop:iterated_operators_closed_form:E-RR}, \eqref{E:kernel_K_monotonicity_first_var}, \eqref{prop:iterated_operators_closed_form:E5}, and \eqref{prel:ri:dilation}, we obtain
\begin{align*}
\sigma_{m, X}(f) = \|(R_\beta \circ R_2^k)f^*\|_{X(0, \infty)} &\approx \Big\| \phi_m(t) \int_0^t f^*(s) K_k(s,t) \d{s} \Big\|_{X(0, \infty)} \\
&\geq \Big\| \phi_m(t) \int_0^{t/2} f^*(s) K_k\Big( \frac{t}{2}, t \Big) \d{s} \Big\|_{X(0, \infty)} \\
&\approx \Big\| \phi_m(t) \int_0^{t/2} f^*(s) \d{s} \Big\|_{X(0, \infty)} \\
&\approx \Big\| \phi_m(t) \int_0^t f^*(s) \d{s} \Big\|_{X(0, \infty)} \\
&=\lambda_{m, X}(f)
\end{align*}
for every $f\in\Mpl(0, \infty)$.

Now, we shall show that the functionals $\nu_{m, X}$, $\sigma_{m, X}$, $\lambda_{m, X}$, and $\mu_{m, X}$ are subadditive and satisfy the properties (P4) and (P5) of rearrangement\hyp{}invariant function norms\textemdash the remaining properties can be readily verified. We start with the subadditivity. The subadditivity of the functionals follows from \eqref{prel:ri:subadditivityofdoublestar}. Indeed, it implies that
\begin{equation*}
R_\alpha(f + g)^* \leq R_\alpha f^* + R_\alpha g^*
\end{equation*}
for every $f,g\in\Mpl(0, \infty)$ and every $\alpha\in[0, n)$, whence the subadditivity follows.

We now turn our attention to the property (P4). As for the functional $\lambda_{m, X}$, since $\chi_E^* = \chi_{(0, |E|)}$ for every measurable $E\subseteq(0, \infty)$, it is sufficient to prove that
\begin{equation}\label{prop:operator_induced_norms_lambda_finite_on_characteristic_functions}
\lambda_{m, X}(\chi_{(0, a)}) < \infty \quad \text{for every $a\in(1, \infty)$}
\end{equation}
if \eqref{prop:operator_induced_norms_lambda_condition} is satisfied. Note that, for $a\in[1, \infty)$,
\begin{equation*}
R_m(\chi_{(0, a)})(t) = t^{\frac{m}{n}}\chi_{(0, 1)}(t) + \chi_{[1, a)}(t) + \frac{a}{t}\chi_{[a, \infty)}(t) \quad \text{for every $t\in(0, \infty)$}.
\end{equation*}
Using that with the boundedness of the dilation operator $D_a$ on $X(0, \infty)$ (see \eqref{prel:ri:dilation}), it follows that \eqref{prop:operator_induced_norms_lambda_finite_on_characteristic_functions} is satisfied if \eqref{prop:operator_induced_norms_lambda_condition} is satisfied. It also follows that, if \eqref{prop:operator_induced_norms_lambda_condition} is not satisfied, then $\lambda_{m, X}(\chi_{(0, 1)}) = \infty$, and so $\lambda_{m, X}$ does not have the property (P4). As for the functionals $\nu_{m, X}$ and $\sigma_{m, X}$, thanks to \eqref{prop:operator_induced_norms_sigma_lesssim_nu}, it sufficient to prove that
\begin{equation*}
\nu_{m, X}(\chi_{(0, a)}) < \infty \quad \text{for every $a\in(1, \infty)$}
\end{equation*}
if \eqref{prop:operator_induced_norms_nu_sigma_condition} is satisfied.
Note that, for $a\in[1, \infty)$,
\begin{equation*}
P(\chi_{(0, a)})(t) = \chi_{(0, a)}^{**}(t) = \chi_{(0, a)}(t) + \frac{a}{t}\chi_{[a, \infty)}(t) \quad \text{for every $t\in(0, \infty)$}.
\end{equation*}
Carrying out elementary computations, we see that
\begin{align*}
(H_2\circ P)^{k+1}\chi_{(0, a)}(t) &\lesssim (1 + \log a)^{k+1} \Big( \chi_{(0, a)}(t) + a\frac{(1 + \log\big( \frac{t}{a}\big))^k}{t}\chi_{[a, \infty)}(t) \Big) \\
\intertext{and}
(R_\beta\circ (H_2\circ P)^k)\chi_{(0, a)}(t) &\lesssim (1 + \log a)^k \Big( t\phi_\beta(t)\chi_{(0, a)}(t) + a\frac{(1 + \log\big( \frac{t}{a}\big))^k}{t}\chi_{[a, \infty)}(t) \Big)
\end{align*}
for every $t\in(0, \infty)$, in which the multiplicative constants depend only on $m$ and $n$. That together with the boundedness of the dilation operator $D_a$ on $X(0, \infty)$ (see \eqref{prel:ri:dilation}) implies that, if \eqref{prop:operator_induced_norms_nu_sigma_condition} is satisfied, then $\nu_{m, X}$ has the property (P4) (and so does $\sigma_{m, X}$). On the other hand, if \eqref{prop:operator_induced_norms_nu_sigma_condition} is not satisfied, then the functional $\sigma_{m, X}$ does not have the property (P4) (and, in turn, neither does $\nu_{m, X}$). Indeed, using \eqref{prop:properties_of_iterated_operators_Ra_k} and \eqref{prop:properties_of_iterated_operators_Ri_R_jk}, we observe that
\begin{align}
R_2^{k+1}\chi_{(0, 1)}(t) &\geq \frac1{t k!}\Big( \int_0^{\frac{1}{2}} \Big( \int_{\frac{1}{2}}^t \phi_2 \Big)^k \d{s} \Big) \chi_{(1, \infty)}(t) \notag\\
&\approx \frac{(1 + \log t)^k}{t}\chi_{(1, \infty)}(t) \label{prop:operator_induced_norms_lower_bound_on_R2k_chi_0_1}
\intertext{and}
(R_1\circ R_2^k)\chi_{(0, 1)}(t) &\gtrsim \frac{(1 + \log t)^k}{t}\chi_{(1, \infty)}(t) \notag
\end{align}
for every $t\in(0, \infty)$, in which the multiplicative constants depend only on $n$ and $m$; it follows that $\sigma_{m, X}(\chi_{(0, 1)}) = \infty$. As for $\mu_{m, X}$, the proof is similar, so we only sketch it. Using \eqref{prop:properties_of_iterated_operators_Hjk_Hi}, elementary computations show that, for $a\in[1, \infty)$ and $t\in(0, \infty)$,
\begin{align*}
(H_2^{k + 1} \circ P) \chi_{(0, a)}(t) &\approx b_a(t)\chi_{(0, a)}(t) + \frac1{t}\chi_{[a, \infty)}(t) \\
\intertext{and}
(R_1 \circ H_2^k \circ P) \chi_{(0, a)}(t) &\approx \widetilde{b}_a(t)\chi_{(0, a)}(t) + \frac{1 + \log t}{t}\chi_{[a, \infty)}(t),
\end{align*}
in which $b_a$ and $\widetilde{b}_a$ are bounded positive continuous functions on $(0, \infty)$, whose supremum norms depend on $a$, and in which the multiplicative constants depend on $m$, $n$ and $a$. Combining that with the boundedness of the dilation operator $D_a$ on $X(0, \infty)$, it follows that the functional $\mu_{m, X}$ has the property (P4) if and only if, when $m$ is even, \eqref{prop:operator_induced_norms_lambda_condition} is satisfied; when $m$ is odd, \eqref{prop:operator_induced_norms_nu_sigma_condition} with $k = 1$ is satisfied. 

It remains to prove that the functionals $\nu_{m, X}$, $\sigma_{m, X}$, $\lambda_{m, X}$, and $\mu_{m, X}$ have the property (P5). As for the first three, in view of \eqref{prop:operator_induced_norms_sigma_lesssim_nu}, \eqref{prop:operator_induced_norms_lambda_lesssim_sigma} and \eqref{prel:ri:HLg=chiE}, it is sufficient to prove that
\begin{equation*}
\int_0^a f^*(s) \d{s} \leq C_a \lambda_m(f) \quad \text{for every $f\in\Mpl(0, \infty)$},
\end{equation*}
in which the multiplicative constant $C_a$ may depend on $a\in(0, \infty)$ but not on $f$. However, that follows immediately from
\begin{equation*}
\|R_m f^*\|_{X(0, \infty)} \geq \phi_m(2a) \int_0^a f^*(s) \d{s} \|\chi_{(a, 2a)}\|_{X(0, \infty)}
\end{equation*}
for every $f\in\Mpl(0, \infty)$. As for $\mu_{m, X}$, note that $\mu_{1, X}(f)\geq \lambda_{1, X}(f)$ for every $f\in\Mpl(0, \infty)$, and so $\mu_{1, X}$ has the property (P5), too. Assume that $m\geq 2$. Using \eqref{prop:properties_of_iterated_operators_Hjk_Hi}, it can be readily verified that
\begin{align*}
\|H_2^{k + 1}f^{**}\|_{X(0, \infty)} &\geq \frac1{k!}\frac{a}{2} \phi_2(a) \Big(\int_{\frac{a}{4}}^{\frac{a}{2}} \phi_2\Big)^k f^{**}(a) \|\chi_{(0,\frac{a}{4})}\|_{X(0, \infty)}\\
\intertext{and}
\|(P_1 \circ H_2^k)f^{**}\|_{X(0, \infty)} &\geq \frac{\phi_1(\frac{a}{2})}{(k - 1)!}\frac{a^2}{8} \phi_2(a) \Big(\int_{\frac{a}{4}}^{\frac{a}{2}} \phi_2\Big)^{k - 1} f^{**}(a) \|\chi_{(\frac{a}{4},\frac{a}{2})}\|_{X(0, \infty)}
\end{align*}
for every $a\in(0, \infty)$ and every $f\in\Mpl(0, \infty)$. It follows that $\mu_{m, X}$ has the property (P5).
\end{proof}

\begin{remark}\label{rem:operator_induced_norms_closed_forms}\ 
\begin{enumerate}[label=(\roman*), ref=(\roman*)]
\item It is not hard to see that $\nu_{m, X}(f) = \sigma_{m, X}(f) = \lambda_{m, X}(f) = \mu_{m, X}(f) =\infty$ for every function $f\in\Mpl(0, \infty)\setminus\{0\}$ when the functionals lack the property (P4). The functionals have the property (P4) if and only if the respective condition from \cref{prop:operator_induced_norms} is satisfied. Furthermore, note that $\nu_{1, X} = \sigma_{1, X} = \lambda_{1, X}$, that $\sigma_{2, X} = \lambda_{2, X}$, and that $\nu_{2, X} = \mu_{2, X}$. Finally, the statement of \cref{prop:operator_induced_norms} is also true (with virtually the same proof) even when the subscripts $2$ and $\beta$ in the definitions of $\nu_{m, X}$, $\sigma_{m, X}$, and $\mu_{m, X}$ are replaced by any $\alpha, \gamma\in(0, n)$. The subscript $m$, too, can be replaced by any $\alpha\in(0, n)$ in the definition of $\lambda_{m, X}$.
\item\label{rem:operator_induced_norms_closed_forms:item} Assume that $m\geq3$, and define the functionals $\widetilde{\nu}_{m, X}$, $\widetilde{\sigma}_{m, X}$, and $\widetilde{\mu}_{m, X}$ as follows. For every $f\in\Mpl(0, \infty)$, the functional $\widetilde{\sigma}_{m, X}$ is defined as
\begin{equation*}
\widetilde{\sigma}_{m, X}(f) = \Big\| \phi_m(t) \int_0^t f^*(s) K_k(s,t) \d{s} \Big\|_{X(0, \infty)}.
\end{equation*}
For every $f\in\Mpl(0, \infty)$, the functional $\widetilde{\mu}_{m, X}$ is defined as
\begin{equation*}
\widetilde{\mu}_{m, X}(f) = \Big\| \int_t^\infty f^{**}(s) \phi_{m}(s) K_k(t,s) \d{s} \Big\|_{X(0, \infty)}
\end{equation*}
when $m$ is even and as
\begin{align*}
\widetilde{\mu}_{m, X}(f) &= \Big\| \phi_1(t) \int_0^t f^{**}(s) s \phi_{m-1}(s) \d{s} \Big\|_{X(0, \infty)} \\
&\quad+ \Big\| \phi_1(t) t \int_t^\infty f^{**}(s) \phi_{m-1}(s) K_{k-1}(t,s) \d{s} \Big\|_{X(0, \infty)}
\end{align*}
when $m$ is odd. Finally, for every $f\in\Mpl(0, \infty)$, the functional $\widetilde{\nu}_{m, X}$ is defined as
\begin{align*}
\widetilde{\nu}_{m, X}(f) &= \Big\| \phi_m(t) \int_0^t f^*(s) K_k(s,t) \d{s} \Big\|_{X(0, \infty)} \\
&\quad+ \Big\| \int_t^\infty f^{**}(s) \phi_m(s) K_k(t,s) \d{s} \Big\|_{X(0, \infty)}
\end{align*}
when $m$ is even and as
\begin{align*}
\widetilde{\nu}_{m, X}(f) &= \Big\| \phi_1(t) \int_0^t f^{**}(s) s \phi_{m-1}(s) \d{s} \Big\|_{X(0, \infty)}\\
&\quad+ \Big\| \phi_1(t) t \int_t^\infty f^{**}(s) \phi_{m-1}(s) K_{k-1}(t,s) \d{s} \Big\|_{X(0, \infty)}\\
&\quad+ \Big\| \phi_m(t) \int_0^t f^*(s) K_k(s,t) \d{s} \Big\|_{X(0, \infty)}
\end{align*}
when $m$ is odd. It follows from \cref{prop:properties_of_iterated_operators,prop:iterated_operators_closed_form} that
\begin{align*}
\widetilde{\sigma}_{m, X}(f) &\approx \sigma_{m, X}(f) \quad \text{for every $f\in\Mpl(0, \infty)$},\\
\widetilde{\mu}_{m, X}(f) &\approx \mu_{m, X}(f) \quad \text{for every $f\in\Mpl(0, \infty)$},
\intertext{and}
\widetilde{\nu}_{m, X}(f) &\approx \nu_{m, X}(f) \quad \text{for every $f\in\Mpl(0, \infty)$},
\end{align*}
in which the multiplicative constants depend only on $m$ and $n$. Furthermore, the functional $\widetilde{\nu}_{m, X}$, $\widetilde{\sigma}_{m, X}$, or $\widetilde{\mu}_{m, X}$ is a rearrangement\hyp{}invariant function norms if and only if $\nu_{m, X}$, $\sigma_{m, X}$, or $\mu_{m, X}$ is, respectively. The fact that the functionals $\widetilde{\nu}_{m, X}$, $\widetilde{\sigma}_{m, X}$, and $\widetilde{\mu}_{m, X}$ are subadditive follows from the subadditivity of $f\mapsto f^{**}$, \eqref{E:kernel_K_monotonicity_first_var}, and Hardy's lemma \cite[Chapter~2, Proposition~3.6]{BS}.
\end{enumerate}
\end{remark}

The following proposition, which concludes this section, shows that the functionals defined in \cref{prop:operator_induced_norms} are sometimes equivalent.
\begin{proposition}\label{prop:operator_induced_norms_equivalences}
Let $\|\cdot\|_{X(0, \infty)}$ be a rearrangement\hyp{}invariant function norm, $m < n$, $m\in\N$. Let $\nu_{m, X}$, $\sigma_{m, X}$, $\lambda_{m, X}$, and $\mu_{m, X}$ be the functionals defined in \cref{prop:operator_induced_norms}.
\begin{enumerate}[label=(\roman*), ref=(\roman*)]
\item\label{prop:operator_induced_norms_equivalences_nu_sigma_item} If the operator $f\mapsto f^{**}$ is bounded on $X'(0, \infty)$, then we have
\begin{equation}\label{prop:operator_induced_norms_equivalences_nu_sigma}
\nu_{m, X}(f) \approx \sigma_{m, X}(f) \quad \text{for every $f\in\Mpl(0, \infty)$}.
\end{equation}
\item\label{prop:operator_induced_norms_equivalences_sigma_lambda_item} If the operator $f\mapsto f^{**}$ is bounded on $X(0, \infty)$ and \eqref{prop:operator_induced_norms_nu_sigma_condition} is satisfied, then we have
\begin{align}
\sigma_{m, X}(f) &\approx \lambda_{m, X}(f) \quad \text{for every $f\in\Mpl(0, \infty)$} \label{prop:operator_induced_norms_equivalences_sigma_lambda} \\
\intertext{and}
\nu_{m, X}(f) &\approx \mu_{m, X}(f) \quad \text{for every $f\in\Mpl(0, \infty)$}. \label{prop:operator_induced_norms_equivalences_nu_mu}
\end{align}
\end{enumerate}
\end{proposition}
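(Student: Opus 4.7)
The plan is to reduce everything to the decomposition already in hand and handle the three nontrivial inequalities one at a time. By \eqref{prop:properties_of_iterated_operators_(Hj_P)k} applied to $X(0,\infty)$ (and its analogue \eqref{prop:properties_of_iterated_operators_Ri_(Hj_P)k} for odd $m$), one has $\nu_{m,X}(f)\approx\mu_{m,X}(f)+\sigma_{m,X}(f)$ for every $f\in\Mpl(0,\infty)$, with constants depending only on $m$ and $n$. Combined with \eqref{prop:operator_induced_norms_sigma_lesssim_nu} and \eqref{prop:operator_induced_norms_lambda_lesssim_sigma}, part (i) reduces to showing $\mu_{m,X}\lesssim\sigma_{m,X}$, and part (ii) reduces to showing $\sigma_{m,X}\lesssim\lambda_{m,X}$ (first equivalence) together with $\lambda_{m,X}\lesssim\mu_{m,X}$; chaining these with the first equivalence then yields $\nu\approx\mu+\sigma\approx\mu+\lambda\approx\mu$ for the second equivalence.

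For part (ii), first equivalence, the near-zero part is already treated in \eqref{prop:operator_induced_norms_sigma_lambda_near1_equiv}, so only the part on $(1,\infty)$ needs work. I would split the integral defining $R_\beta R_2^kf^*(t)$ for $t\geq 1$ at $s=1$. The contribution from $s\in(0,1)$ is at most $\tfrac{(1+\log t)^k}{t}\int_0^1 f^*$, whose $X$-norm is finite by \eqref{prop:operator_induced_norms_nu_sigma_condition} and whose prefactor is controlled by $\lambda_{m,X}(f)$ since $R_mf^*(t)\geq\tfrac12\int_0^1f^*$ on $(1,2)$. The contribution from $s\in[1,t]$ equals (up to a constant) $P^{k+1}(f^*\chi_{(1,\infty)})(t)$ by the iterated integration-by-parts identity $\int_0^t F(s)(\log(t/s))^k\d{s}=k!\,t\,P^{k+1}F(t)$; since $P(f^*\chi_{(1,\infty)})(t)\leq f^{**}(t)=R_mf^*(t)$ for $t\geq1$, and $P$ is bounded on $X(0,\infty)$, this term's $X$-norm is $\lesssim\|R_mf^*\|_{X(0,\infty)}=\lambda_{m,X}(f)$.

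For part (ii), second equivalence, the missing piece $\lambda_{m,X}\lesssim\mu_{m,X}$ is proved \emph{pointwise}: one restricts the integral defining $H_2^{k+1}f^{**}(t)$ to $s\in(t,2t)$, where $f^{**}(s)\geq f^{**}(2t)\geq\tfrac12 f^{**}(t)$ by monotonicity of $s\mapsto sf^{**}(s)$, and where $\int_t^{2t}\phi_2(s)(\int_t^s\phi_2)^k\,\dx s=\tfrac{1}{k+1}(\int_t^{2t}\phi_2)^{k+1}\approx(t\phi_2(t))^{k+1}$ by direct computation on both regimes $t\leq1/2$ and $t\geq1$. Multiplying out gives $H_2^{k+1}f^{**}(t)\gtrsim f^{**}(t)\cdot t\phi_2(t)^{k+1}\approx t\phi_{m}(t)f^{**}(t)=R_mf^*(t)$, using $m=2(k+1)$ in the even case; in the odd case the same comparison goes through on the inner $H_2^k f^{**}$ and an additional application of $R_1$ handles the outer operator on both sides.

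Part (i) is the main obstacle. After the reduction, I need $\mu_{m,X}\lesssim\sigma_{m,X}$ under the hypothesis that $P$ is bounded on $X'(0,\infty)$, equivalently (by \eqref{prel:ri:P_bounded_iff_Q}) that $Q$ is bounded on $X(0,\infty)$. By duality and the adjointness relations $H_2^*=R_2$ and $P^*=Q$,
\begin{equation*}
\mu_{m,X}(f)=\sup_{\|g\|_{X'(0,\infty)}\leq 1}\int_0^\infty f^*(t)\,(Q\circ R_2^{k+1})g(t)\,\dx t.
\end{equation*}
The key pointwise estimate is $(Q\circ R_2)h\approx R_2 h+H_2 h$, obtained by Fubini from $\int_t^\infty\phi_2(s)/s\,\dx s\approx\phi_2(t)$ exactly as in the derivation of \eqref{prop:properties_of_iterated_operators_Ha_P_approx_Ra_+_Ha} (since $QR_2$ and $H_2 P$ are $L^2$-adjoints and share the same kernel structure). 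Applying this identity with input $R_2^k g$ gives $(Q\circ R_2^{k+1})g\lesssim R_2^{k+1}g+H_2R_2^k g$, and iterating analogous identities recursively eliminates every occurrence of $Q$ in favor of $R_2$'s and $H_2$'s applied to $f^*$. At each step the boundedness of $Q$ on $X$ is invoked to absorb the leftover $Q$-operators into constants, and dilations introduced by the pointwise bound \eqref{prop:properties_of_iterated_operators_(Hj_P)k_pointwise} are handled by \eqref{prel:ri:dilation}. Terminating the recursion at pure $R_2^{k+1}f^*$ (i.e., $\sigma_{m,X}(f)$) is the delicate step and the expected main difficulty: one must verify that none of the intermediate mixed products $H_2^jR_2^{k+1-j}f^*$ escape the bound $\sigma_{m,X}(f)$, which is done by reapplying the same duality trick combined with $Q$-boundedness on $X$. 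The odd case is handled identically with an extra outer $R_1$ that simply commutes through the argument.
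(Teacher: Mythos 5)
Part (ii) of your proposal is essentially sound, and for the second equivalence it is in fact a cleaner route than the paper's for odd $m\geq3$: the decomposition $\nu_{m,X}\approx\mu_{m,X}+\sigma_{m,X}$ does follow from \eqref{prop:properties_of_iterated_operators_(Hj_P)k} and \eqref{prop:properties_of_iterated_operators_Ri_(Hj_P)k} (for $m\geq2$ even and $m\geq3$ odd), your proof of $\sigma_{m,X}\lesssim\lambda_{m,X}$ is the same splitting the paper uses, and the pointwise bound $R_mf^*(t)=\min\{t^{m/n},1\}f^{**}(t)\lesssim H_2^{k+1}f^{**}(t)$ (and its analogue with the outer $R_1$ for odd $m$) is correct, so $\sigma\lesssim\lambda\lesssim\mu$ closes \eqref{prop:operator_induced_norms_equivalences_nu_mu} without the paper's Boyd-index/intermediate-space argument. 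One genuine omission there: your chain never covers $m=1$ in \eqref{prop:operator_induced_norms_equivalences_nu_mu} (the decomposition needs $k\geq1$), and the missing inequality $\|R_1f^{**}\|_{X(0,\infty)}\lesssim\|R_1f^*\|_{X(0,\infty)}$ admits no pointwise proof; this is exactly where the paper invokes the dilation estimate $\|D_a\|_{Z(0,\infty)}\leq\|D_a\|_{X(0,\infty)}$ and the Boyd-index criterion.

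The genuine gap is part (i) for $m\geq3$. The reduction to $\mu_{m,X}\lesssim\sigma_{m,X}$ is fine, and the single $Q$ is absorbed in one step (indeed $H_2\circ P=Q\circ R_2$, so for $m=2$ one gets $\nu_{2,X}(f)=\|Q(R_2f^*)\|_{X(0,\infty)}\leq\|Q\|_{X(0,\infty)}\sigma_{2,X}(f)$ at once). But after that first step you are left with precisely the mixed quantities $\|H_2^jR_2^{k+1-j}f^*\|_{X(0,\infty)}$, and your claim that these are controlled by $\sigma_{m,X}(f)$ ``by reapplying the same duality trick combined with $Q$-boundedness'' is not an argument: trading $H_2$ for $Q$ and absorbing it only produces lower-order pure-$R$ terms, and bounds of the type $\|R_2^{k}f^*\|_{X(0,\infty)}\lesssim\|R_2^{k+1}f^*\|_{X(0,\infty)}$ or $\|f^*\chi_{(0,1)}\|_{X(0,\infty)}\lesssim\sigma_{m,X}(f)$ are simply false in general (already for $X=L^p$, $1<p<\infty$, take $f^*(t)=t^{-a}\chi_{(0,1)}(t)$ with $\frac{2k}{n}+\frac1p<a<\frac{2(k+1)}{n}+\frac1p$, $a<1$). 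On the other hand, since $H_2^jR_2^{k+1-j}f^*\leq(H_2\circ P)^{k+1}f^*$ pointwise, proving the mixed bounds is essentially equivalent to the statement itself, so the recursion as sketched is circular. The paper closes this with two ingredients absent from your proposal: a transfer lemma showing that boundedness of $P$ on $X'(0,\infty)$ passes to $Z'(0,\infty)$ for the intermediate space $Z$ generated by $\nu_{1,X}$, $\nu_{2,X}$ (via the auxiliary operator $T$ bounded on $L^1$ and $L^\infty$), which makes an induction in steps of two possible, and the rearrangement-removal result of \cite[Theorem~3.10]{Pe:20} (see also \cite[Theorem~9.5]{CPS:15}) giving $\|R_\beta((R_2^{k+1}f^*)^*)\|_{X(0,\infty)}\approx\|R_\beta(R_2^{k+1}f^*)\|_{X(0,\infty)}$. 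Without something playing these roles, part (i) is not established beyond $m\in\{1,2\}$.
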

\begin{proof}
We start with \ref{prop:operator_induced_norms_equivalences_nu_sigma_item}. First, it is not hard to see that, if \eqref{thm:optimal_target_general_theorem_condition} is not satisfied, then $\nu_{m, X}(f) = \sigma_{m, X}(f) = \infty$ for every $f\in\Mpl(0, \infty)\setminus\{0\}$. Therefore, we may assume that \eqref{thm:optimal_target_general_theorem_condition} is satisfied. Recall that the operator $f \mapsto f^{**}$ is bounded on $X'(0, \infty)$ if and only if the operator $P$ is (see \eqref{prel:ri:P_bounded_iff_double_star}); moreover, their operator norms are the same. If $m = 1$, there is nothing to prove, for $\nu_{1, X} = \sigma_{1, X}$. Assume now that $m = 2$. Using \eqref{prel:ri:normX''down}, \eqref{prel:ri:Rk_Hk_self_adjoint} and the fact that the operator $P$ is bounded on $X'(0, \infty)$, we obtain
\begin{align*}
\nu_{2, X}(f) &= \|(H_2\circ P)f^*\|_{X(0, \infty)} = \|(Q\circ R_2)f^*\|_{X(0, \infty)}\\
&= \sup_{\substack{g\in\Mpl(0, \infty)\\ \|g\|_{X'(0, \infty)}\leq1}} \int_0^\infty g^*(t) (Q\circ R_2)f^*(t) \d{t} \\
&= \sup_{\substack{g\in\Mpl(0, \infty)\\ \|g\|_{X'(0, \infty)}\leq1}} \int_0^\infty Pg^*(t) R_2f^*(t) \d{t} \\
&\approx \sup_{\substack{g\in\Mpl(0, \infty)\\ \|g\|_{X'(0, \infty)}\leq1}} \int_0^\infty g^*(t) R_2f^*(t) \d{t}
\end{align*}
for every $f\in\Mpl(0, \infty)$, in which the multiplicative constants depend only on $\|P\|_{X'(0, \infty)}$. Here we also used the fact that the function $Pg^* = g^{**}$ is nonincreasing together with \eqref{prel:ri:twostarsdominateonestar}. Owing to \cite[Theorem~3.10]{Pe:20} (see also \cite[Theorem~9.5]{CPS:15}) combined with \eqref{prel:ri:normX''}, we have
\begin{align*}
\sup_{\substack{g\in\Mpl(0, \infty)\\ \|g\|_{X'(0, \infty)}\leq1}} \int_0^\infty g^*(t) R_2f^*(t) \d{t} &\approx \sup_{\substack{g\in\Mpl(0, \infty)\\ \|g\|_{X'(0, \infty)}\leq1}} \int_0^\infty g(t) R_2f^*(t) \d{t} \\
&= \|R_2f^*(t)\|_{X(0, \infty)} = \sigma_{2, X}(f)
\end{align*}
for every $f\in\Mpl(0, \infty)$, in which the multiplicative constants are absolute. Hence \eqref{prop:operator_induced_norms_equivalences_nu_sigma} is true for $m = 2$. Now, let $m$ be either $1$ or $2$ (and so $\beta = m$, where $\beta$ is defined by \eqref{thm:optimal_target_general_theorem_beta}), and let $Z(0, \infty)$ be the rearrangement\hyp{}invariant function space whose function norm is given by $\nu_{m, X}$. We claim that, if the operator $P$ is bounded on $X'(0, \infty)$, then it is also bounded on $Z'(0, \infty)$. In view of \eqref{prel:ri:P_bounded_iff_Q} and \eqref{prel:ri:Q_bounded_iff_Qfstar}, we have the fact that the operator $Q$ is bounded on $X(0, \infty)$, and it is sufficient to show that the operator $g\mapsto Qg^*$ is bounded on $Z(0, \infty)$. Note that
\begin{equation}\label{prop:operator_induced_norms_Rbeta_circ_Q}
(R_\beta\circ Q)g^*(t) = R_\beta g^*(t) + t\phi_\beta(t)Qg^*(t)
\end{equation}
for every $t\in(0, \infty)$ and every $g\in\Mpl(0, \infty)$, thanks to the Fubini theorem. Furthermore, note that the auxiliary operator $T$ defined as
\begin{equation*}
Th(t) = \chi_{(0,1)}(t)t\phi_\beta(t) \int_t^1 \frac{h(s)}{s^2 \phi_\beta(s)} \d{s},\ h\in\M(0, \infty),\ t\in(0, \infty),
\end{equation*}
is bounded on both $L^1(0, \infty)$ and $L^\infty(0, \infty)$. Indeed, we have
\begin{align*}
\|Th\|_{L^1(0, \infty)} &\leq \int_0^1 |h(s)|s^{-1 - \frac{\beta}{n}} \int_0^s t^{\frac{\beta}{n}} \d{t} \d{s} \leq \frac{n}{n + \beta}\|h\|_{L^1(0, \infty)} \\
\intertext{and}
\|Th\|_{L^\infty(0, \infty)} &\leq \|h\|_{L^\infty(0, \infty)} \sup_{t\in (0, 1)} t^{\frac{\beta}{n}} \int_t^1 s^{-1 - \frac{\beta}{n}} \d{s} = \frac{n}{\beta} \|h\|_{L^\infty(0, \infty)}.
\end{align*}
Owing to \cite[Chapter~3, Theorem~2.2]{BS}, the operator $T$ is bounded on every rearrangement\hyp{}invariant function space over $(0, \infty)$, and its norm depends only on $n$ and $\beta$. In particular, $T$ is bounded on $X(0, \infty)$. Using \eqref{prop:operator_induced_norms_Rbeta_circ_Q}, we have
\begin{align}
\|Qg^*\|_{Z(0, \infty)} &\approx \|R_\beta(Qg^*)\|_{X(0, \infty)} \leq \|g\|_{Z(0, \infty)} + \|t\phi_\beta(t)Qg^*(t)\|_{X(0, \infty)} \notag\\
&\leq \|g\|_{Z(0, \infty)} + \|t\phi_\beta(t)Q(g^*\chi_{(0,1)})(t)\|_{X(0, \infty)} \notag\\
&\quad + \|t\phi_\beta(t)Q(g^*\chi_{(1, \infty)})(t)\|_{X(0, \infty)} \label{prop:operator_induced_norms_Q_bounded_on_Z_aux1}
\end{align}
for every $g\in\Mpl(0, \infty)$. Thanks to the boundedness of the operators $Q$ and $T$ on $X(0, \infty)$ and the monotonicity of the nonincreasing rearrangement, we have
\begin{align}
&\|t\phi_\beta(t)Q(g^*\chi_{(0,1)})(t)\|_{X(0, \infty)} + \|t\phi_\beta(t)Q(g^*\chi_{(1, \infty)})(t)\|_{X(0, \infty)} \notag\\
&= \Big\| \chi_{(0,1)}(t)t^\frac{\beta}{n} \int_t^1 \frac{g^*(s)s\phi_\beta(s)}{s^2\phi_\beta(s)} \d{s} \Big\|_{X(0, \infty)} + \|Q(g^*\chi_{(1, \infty)})\|_{X(0, \infty)} \notag\\
&\lesssim \|g^*(t)t\phi_\beta(t)\|_{X(0, \infty)} + \|g^*\chi_{(1, \infty)}\|_{X(0, \infty)} \notag\\
&\leq 2\|g^*(t)t\phi_\beta(t)\|_{X(0, \infty)} \leq 2\|R_\beta g^*\|_{X(0, \infty)} \approx \|g\|_{Z(0, \infty)} \label{prop:operator_induced_norms_Q_bounded_on_Z_aux2}
\end{align}
for every $g\in\Mpl(0, \infty)$. Hence the operator $g\mapsto Qg^*$ is bounded on $Z(0, \infty)$ thanks to \eqref{prop:operator_induced_norms_Q_bounded_on_Z_aux1} and \eqref{prop:operator_induced_norms_Q_bounded_on_Z_aux2}.

Finally, the rest follows by induction. Assume that the statement is valid for both $m\in\N$ and $m + 1$. Let $k = \lceil m/2 - 1 \rceil$. Let $Z_1$ and $Z_2$ be the rearrangement\hyp{}invariant function spaces defined by $\nu_{1, X}$ and $\nu_{2, X}$, respectively. Thanks to the previous step, we know that $\nu_{2, X} \approx \sigma_{2, X}$ and that the operator $f\mapsto f^{**}$ is bounded on $Z_i'(0, \infty)$, $i = 1,2$. Hence, owing to the inductive hypothesis, we have $\nu_{m + 1, Z_1} \approx \sigma_{m + 1, Z_1}$ and $\nu_{m, Z_2} \approx \sigma_{m, Z_2}$. Furthermore, it follows from \cite[Theorem~3.10]{Pe:20} (see also \cite[Theorem~9.5]{CPS:15}) that
\begin{equation*}
\|R_\beta((R_2^{k+1}f^*)^*)\|_{X(0, \infty)} \approx \|R_\beta(R_2^{k+1}f^*)\|_{X(0, \infty)} \quad \text{for every $f\in\Mpl(0, \infty)$},
\end{equation*}
where the multiplicative constants are absolute. Note that
\begin{equation*}
\|R_\beta(R_2^{k+1}f^*)\|_{X(0, \infty)} = \sigma_{m + 2, X}(f) \quad \text{for every $f\in\Mpl(0, \infty)$}.
\end{equation*}
The rest is simple now. If $m$ is odd, we have
\begin{align*}
\nu_{m + 2, X}(f) &= \|(H_2\circ P)^{k+1} f^*\|_{Z_1} = \nu_{m+1, Z_1}(f) \approx \sigma_{m+1, Z_1}(f) \\
&= \|R_1((R_2^{k+1}f^*)^*)\|_{X(0, \infty)} \approx \sigma_{m + 2, X}(f)
\end{align*}
for every $f\in\Mpl(0, \infty)$. If $m$ is even, we have
\begin{align*}
\nu_{m + 2, X}(f) &= \|(H_2\circ P)^{k+1} f^*\|_{Z_2} = \nu_{m, Z_2}(f) \approx \sigma_{m, Z_2}(f) \\
&\approx \|R_2((R_2^{k+1}f^*)^*)\|_{X(0, \infty)} \approx \sigma_{m + 2, X}(f)
\end{align*}
for every $f\in\Mpl(0, \infty)$, which finishes the proof of \ref{prop:operator_induced_norms_equivalences_nu_sigma_item}.

We now turn our attention to \ref{prop:operator_induced_norms_equivalences_sigma_lambda_item}. We start with \eqref{prop:operator_induced_norms_equivalences_sigma_lambda}. Note that there is nothing to prove when $m\in\{1, 2\}$, for $\sigma_{m, X} = \lambda_{m, X}$ in that case. Let $m\geq 3$. In view of \eqref{prop:operator_induced_norms_lambda_lesssim_sigma}, we only need to prove that
\begin{equation}\label{prop:operator_induced_norms_equivalences_sigma_lambda_sigma_smaller_lambda}
\sigma_{m, X}(f) \lesssim \lambda_{m, X}(f) \quad \text{for every $f\in\Mpl(0, \infty)$}.
\end{equation}
Note that the validity of \eqref{prop:operator_induced_norms_nu_sigma_condition} implies the validity of \eqref{prop:operator_induced_norms_lambda_condition}. Hence, the functional $\lambda_{m, X}$ is a rearrangement\hyp{}invariant function norm thanks to \cref{prop:operator_induced_norms}. Using \eqref{prop:iterated_operators_closed_form:E-RR} and the fact that $\lambda_{m, X}$ is a rearrangement\hyp{}invariant function norm, we have
\begin{align}
\|(R_\beta\circ R_2^k)(f^*\chi_{(0, 1)})\|_{X(0, \infty)} &\approx \Big\| \chi_{(0,1)}(t) \phi_m(t) \int_0^t f^*(s) \d{s} \Big\|_{X(0, \infty)} \nonumber\\
&\quad+ \int_0^1 f^*(s) \d{s} \Big\| \frac{\chi_{(1,\infty)}(t)}{t} \log(et)^k \Big\|_{X(0, \infty)} \nonumber\\
&\leq \lambda_{m, X}(f) + C_{m,X}\Big\| \frac{\chi_{(1,\infty)}(t)}{t} \log(et)^k \Big\|_{X(0, \infty)} \lambda_{m, X}(f) \label{prop:operator_induced_norms_equivalences_sigma_lambda_near_infty_aux1}
\end{align}
for every $f\in\Mpl(0, \infty)$, where $C_{m,X}\in(0, \infty)$ is the constant from the property (P5) of $\lambda_{m, X}$. Furthermore, since we assume that the operator $P$ is bounded on $X(0, \infty)$ and $R_\alpha g \leq Pg$ for every $g\in\Mpl(0, \infty)$ and $\alpha\in[0, n)$, we also have
\begin{align}
\|(R_\beta\circ R_2^k)(f^*\chi_{(1, \infty)})\|_{X(0, \infty)} &\leq \|P^{k + 1}(f^*\chi_{(1, \infty)})\|_{X(0, \infty)} \notag\\
&\leq \|P\|_{X(0, \infty)}^{k+1} \|f^*\chi_{(1, \infty)}\|_{X(0, \infty)} \notag\\
&\leq \|P\|_{X(0, \infty)}^{k+1} \|f^{**}\chi_{(1, \infty)}\|_{X(0, \infty)} \notag\\
&\leq \|P\|_{X(0, \infty)}^{k+1} \lambda_{m, X}(f) \label{prop:operator_induced_norms_equivalences_sigma_lambda_near_infty_aux2}
\end{align}
for every $f\in\Mpl(0, \infty)$. Hence, combining \eqref{prop:operator_induced_norms_equivalences_sigma_lambda_near_infty_aux1} and \eqref{prop:operator_induced_norms_equivalences_sigma_lambda_near_infty_aux2}, we obtain \eqref{prop:operator_induced_norms_equivalences_sigma_lambda_sigma_smaller_lambda}.

Last, we shall prove the equivalence \eqref{prop:operator_induced_norms_equivalences_nu_mu}. First, assume that $m$ is even. If $m = 2$, there is nothing to prove, for $\nu_{2, X} = \mu_{2, X}$. Let $m\geq 4$ be even. Owing to \cref{prop:operator_induced_norms}, both functionals $\nu_{m, X}$ and $\mu_{m, X}$ are rearrangement\hyp{}invariant function norms. Thanks to \eqref{prop:properties_of_iterated_operators_(Hj_P)k}, we have
\begin{equation*}
\nu_{m, X}(f) \approx \mu_{m, X}(f) + \|R_2^{k + 1} f^*\|_{X(0,\infty)} \quad \text{for every $f\in\Mpl(0, \infty)$}.
\end{equation*}
Hence it is sufficient to prove that
\begin{equation}\label{prop:operator_induced_norms_equivalences_nu_mu_missing_ineq}
\|R_2^{k + 1} f^*\|_{X(0,\infty)} \lesssim \mu_{m, X}(f) \quad \text{for every $f\in\Mpl(0, \infty)$}.
\end{equation}
Note that
\begin{align}
\|R_2^{k + 1} f^*\|_{X(0,\infty)} &\approx \|\chi_{(0,1)}(t) \phi_m(t) t f^{**}(t)\|_{X(0, \infty)} \nonumber\\
&\quad+ \int_0^1 f^*(s) \d{s} \Big\| \frac{\chi_{(1,\infty)}(t)}{t} \log(et)^k \Big\|_{X(0, \infty)} \nonumber\\
&\quad+ \Big\| \chi_{(1,\infty)}(t) \frac1{t} \int_1^t f^*(s) \log\Big( \frac{t}{s} \Big)^k \d{s} \Big\|_{X(0, \infty)} \label{prop:operator_induced_norms_equivalences_nu_mu_Rk_split}
\end{align}
for every $f\in\Mpl(0, \infty)$ thanks to \eqref{prop:iterated_operators_closed_form:E-RR}. First, using \eqref{prel:ri:dilation} and \eqref{prop:iterated_operators_closed_form:E-HH}, we have
\begin{align}
\|\chi_{(0,1)}(t) \phi_m(t) t f^{**}(t)\|_{X(0, \infty)} &\approx \Big\| \chi_{(0,1)}(t) \phi_m(t) \int_{t/2}^t K_k\Big( \frac{t}{2}, s \Big) \d{s} f^{**}(t) \Big\|_{X(0, \infty)} \nonumber\\
&\leq  \Big\| \int_{t/2}^t f^{**}(s)\phi_m(s)K_k\Big( \frac{t}{2}, s \Big) \d{s} \Big\|_{X(0, \infty)} \nonumber\\
&\leq  \Big\| \int_{t/2}^\infty f^{**}(s)\phi_m(s)K_k\Big( \frac{t}{2}, s \Big) \d{s} \Big\|_{X(0, \infty)} \nonumber\\
&\approx \mu_{m, X}(f) \label{prop:operator_induced_norms_equivalences_nu_mu_missing_ineq:aux1}
\end{align}
for every $f\in\Mpl(0, \infty)$. Second, note that
\begin{equation}\label{prop:operator_induced_norms_equivalences_nu_mu_missing_ineq:aux2}
\int_0^1 f^*(s) \d{s} \lesssim \mu_{m, X}(f) \quad \text{for every $f\in\Mpl(0, \infty)$}
\end{equation}
thanks to the property (P5) of $\mu_{m, X}$. Finally, thanks to the Fubini theorem, \eqref{prop:iterated_operators_closed_form:E5}, \eqref{E:kernel_K_monotonicity_second_var}, \eqref{prel:ri:dilation}, and \eqref{prop:iterated_operators_closed_form:E-HH}, we have
\begin{align}
&\Big\| \chi_{(1,\infty)}(t) \frac1{t} \int_1^t f^*(s) \log\Big( \frac{t}{s} \Big)^k \d{s} \Big\|_{X(0, \infty)} \nonumber\\
&= \|\chi_{(1,\infty)} P^{k+1}(f^*\chi_{(1, \infty)})\|_{X(0, \infty)} \nonumber\\
&\leq \|P\|_{X(0, \infty)}^k \|f^*\chi_{(1, \infty)}\|_{X(0, \infty)} \nonumber\\
&\approx \|P\|_{X(0, \infty)}^k \Big\| f^*(t)\chi_{(1, \infty)}(t) \int_{t/2}^t \phi_m(s) K_k\Big( \frac{t}{4}, \frac{t}{2} \Big) \d{s} \Big\|_{X(0, \infty)} \nonumber\\
&\leq \|P\|_{X(0, \infty)}^k \Big\| \int_{t/2}^t f^*(s) \phi_m(s) K_k\Big( \frac{t}{4}, s \Big) \d{s} \Big\|_{X(0, \infty)} \nonumber\\
&\leq \|P\|_{X(0, \infty)}^k \Big\| \int_{t/4}^\infty f^{**}(s) \phi_m(s) K_k\Big( \frac{t}{4}, s \Big) \d{s} \Big\|_{X(0, \infty)} \nonumber\\
&\approx \|P\|_{X(0, \infty)}^k \mu_{m, X}(f) \label{prop:operator_induced_norms_equivalences_nu_mu_missing_ineq:aux3}
\end{align}
for every $f\in\Mpl(0, \infty)$. Hence, combining \eqref{prop:operator_induced_norms_equivalences_nu_mu_Rk_split} with \eqref{prop:operator_induced_norms_equivalences_nu_mu_missing_ineq:aux1}, \eqref{prop:operator_induced_norms_equivalences_nu_mu_missing_ineq:aux2}, and \eqref{prop:operator_induced_norms_equivalences_nu_mu_missing_ineq:aux3}, we obtain \eqref{prop:operator_induced_norms_equivalences_nu_mu_missing_ineq}. Therefore, all that remains for us is to prove \eqref{prop:operator_induced_norms_equivalences_nu_mu} for $m$ odd. Let $Z(0, \infty)$ be the rearrangement\hyp{}invariant function space defined by $\nu_{1, X}$, which is a rearrangement\hyp{}invariant function norm thanks to \cref{prop:operator_induced_norms}. We claim that the operator $f\mapsto f^{**}$ is bounded on $Z(0, \infty)$. Owing to \eqref{prel:ri:P_bounded_iff_double_star}, that is the same as showing that the operator $P$ is bounded on $Z(0, \infty)$. By \cite[Chapter~3, Theorem~5.15]{BS}, the operator $P$ is bounded on a rearrangement\hyp{}invariant function space $W(0, \infty)$ if and only if
\begin{equation*}
\inf_{a\in(1, \infty)} \frac{\log \|D_a\|_{W(0, \infty)}}{\log a} < 1.
\end{equation*}
Moreover, the infimum is equal to the limit as $a$ goes to $\infty$, and the quantity is called the upper Boyd index of $W(0, \infty)$. In view of that and the boundedness of the operator $P$ on $X(0, \infty)$, it is sufficient to show that
\begin{equation}\label{prop:operator_induced_norms_equivalences_nu_mu_norm_of_dilation_nu_1X_leq_X}
\|D_a\|_{Z(0, \infty)} \leq \|D_a\|_{X(0, \infty)}.
\end{equation}
To that end, note that
\begin{equation*}
(D_af)^*(t) = f^*\Big( \frac{t}{a} \Big) \quad \text{for every $t\in(0, \infty)$ and $a\in(0, \infty)$}.
\end{equation*}
Using that, we have, for every $a\in(0, \infty)$ and $f\in\Mpl(0, \infty)$,
\begin{align*}
\|D_a f\|_{Z(0, \infty)} &= \Big\| \phi_1(t) \int_0^t f^*\Big( \frac{s}{a} \Big) \d{s} \Big\|_{X(0, \infty)} = a \Big\| \phi_1(t) \int_0^{\frac{t}{a}} f^*(s) \d{s} \Big\|_{X(0, \infty)} \\
&= a\min\{a^{-1 + \frac{1}{n}}, a^{-1}\} \|D_a(R_1f^*)\|_{X(0, \infty)} \\
&\leq \|D_a\|_{X(0, \infty)} \|f\|_{Z(0, \infty)}.
\end{align*}
Hence \eqref{prop:operator_induced_norms_equivalences_nu_mu_norm_of_dilation_nu_1X_leq_X} is valid. Now, note that the fact that the operator $f\mapsto f^{**}$ is bounded on $Z(0, \infty)$ immediately implies the validity of \eqref{prop:operator_induced_norms_equivalences_nu_mu} for $m = 1$. Finally, let $m\geq 3$ be odd. Note that $\lceil (m - 1)/2 - 1 \rceil = k - 1$, and that
\begin{equation*}
\nu_{m, X} = \nu_{m - 1, Z} \qquad \text{and} \qquad \mu_{m, X} = \mu_{m - 1, Z}.
\end{equation*}
Therefore, the validity of \eqref{prop:operator_induced_norms_equivalences_nu_mu} for $m\geq 3$ odd will follow once we observe that
\begin{equation*}
\frac{(1 + \log t)^{k-1}}{t}\chi_{(1, \infty)}(t) \in Z(0, \infty).
\end{equation*}
However, that follows immediately from the validity of \eqref{prop:operator_induced_norms_nu_sigma_condition}. Indeed, using the fact that the function $(0, \infty) \ni t \mapsto \chi_{(0, 1)}(t) + (1 + \log t)^{k-1}\chi_{[1, \infty)}(t)/t$ is equivalent to a nonincreasing function, we have
\begin{align*}
\Big\| \frac{(1 + \log t)^{k-1}}{t}\chi_{(1, \infty)}(t) \Big\|_{Z(0, 1)} &\lesssim \| t^{\frac1{n}}\chi_{(0, 1)}(t) \|_{X(0, \infty)} \\
&\quad+ \Big\| \frac{(1 + \log t)^k}{t}\chi_{(1, \infty)}(t) \Big\|_{X(0, \infty)} < \infty. \qedhere
\end{align*}
\end{proof}

\begin{remark}
Since $\nu_{1, X} = \sigma_{1, X} = \lambda_{1, X}$, \cref{prop:operator_induced_norms_equivalences}\ref{prop:operator_induced_norms_equivalences_nu_sigma_item} is interesting only when $m\geq2$. Furthermore, the equivalence \eqref{prop:operator_induced_norms_equivalences_sigma_lambda} is interesting only when $m\geq 3$, for $\sigma_{2, X} = \lambda_{2, X}$. Finally, the equivalence \eqref{prop:operator_induced_norms_equivalences_nu_mu} is interesting only when $m\neq 2$, for $\nu_{2, X} = \mu_{2, X}$.
\end{remark}

\section{Proofs of Main Results}\label{sec:proofs_main}

\begin{proof}[Proof of \cref{thm:reduction_principle}]
We begin with a few definitions. We define the operators $\widetilde{H}_1$ and $\widetilde{H}_2$ as
\begin{align*}
\widetilde{H}_1f(t) &= \int_t^\infty f(s) \sinh(\varrho(s))^{1-n} \d{s}, \\
\widetilde{H}_2f(t) &= \int_t^\infty f(s)s\sinh(\varrho(s))^{2-2n} \d{s},
\end{align*}
for $f\in\Mpl(0,\infty)$ and  $t\in[0,\infty)$. The function $\varrho$ is the inverse function to the function $V$ defined by \eqref{prel:volume_of_hyperbolic_ball}. We also set
\begin{equation*}
\widetilde{T}_m = \begin{cases}
	(\widetilde{H}_2 \circ P)^k \circ \widetilde{H}_1 \quad &\text{if $m$ is odd}, \\
	(\widetilde{H}_2 \circ P)^{k + 1} \quad &\text{if $m$ is even},	
\end{cases}
\end{equation*}
where $k = \lceil m/2 - 1 \rceil$.
Using L'Hospital's rule, we immediately see that
\begin{align*}
\lim_{t\to0^+} \frac{\sinh(\varrho(t))^{1 - n}}{t^{-1 + \frac1{n}}} &= \left(\lim_{t\to0^+} \frac{\sinh(t)^n}{V(t)}\right)^\frac{1-n}{n} = \omega_n^\frac{n-1}{n}\\
\intertext{and}
\lim_{t\to\infty} \frac{\sinh(\varrho(t))^{1 - n}}{t^{-1}} &= \left(\lim_{t\to\infty} \frac{\sinh(t)^{n-1}}{V(t)}\right)^{-1} = \frac{n\omega_n}{n-1};
\end{align*}
hence
\begin{align}
\sinh(\varrho(t))^{1 - n} &\approx \phi_1(t), \label{thm:reduction_principle_asymp_behavior_1}\\
t\sinh(\varrho(t))^{2 - 2n} &\approx \phi_2(t), \label{thm:reduction_principle_asymp_behavior_2}
\end{align}
for every $t\in(0, \infty)$, in which the multiplicative constants depend only on $n$. It follows that
\begin{equation}\label{thm:reduction_principle_equivalence_Tm}
\widetilde{T}_mf(t) \approx \Tm f(t) \quad \text{for every $f\in\Mpl(0, \infty)$, $t\in(0, \infty)$ and $m\in\N$},
\end{equation}
in which the multiplicative constants depend only on $n$ and $m$. Recall that the operator $T_m$ is defined by \eqref{operator_Tm_def}.

\underline{\ref{thm:reduction_principle_Sobolev_item} implies \ref{thm:reduction_principle_Hardy_item}:} We start with some observations. First, if $f\in\Mpl(0, \infty)\cap \mathcal C_b(0, \infty) \cap L^2(0,\infty)$, then $\widetilde{H}_1 f \in \mathcal C^1(0, \infty)\cap L^2(0, \infty)\cap L^\infty(0, \infty)$ and $\lim_{t\to \infty} \widetilde{H}_1 f(t) = 0$. The fact that $\widetilde{H}_1 f \in L^\infty(0, \infty)$ follows from H\"{o}lder's inequality combined with \eqref{thm:reduction_principle_asymp_behavior_1}. The fact that $\widetilde{H}_1 f \in L^2(0, \infty)$ follows from weighted Hardy's inequality \cite[Theorem~2]{M:72}, inasmuch as
\begin{equation*}
\sup_{r > 0} r^{\frac1{2}}\Big(\int_r^\infty\phi_1^2\Big)^\frac1{2} \approx \sup_{r > 0} r^{\frac1{2}}\phi_2(r)^\frac1{2} < \infty.
\end{equation*}
Second, if $f$ is as above and has support inside $(0, \infty)$, then the function $v_f$ defined as
\begin{equation*}
v_f(x) = \widetilde{H}_1 f(V(\distH(x))),\ x\in\Hn,
\end{equation*}
belongs to $\mathcal C^1(\Hn)\cap L^2(\Hn)\cap L^\infty(\Hn)$ and $\lim_{\distH(x) \to \infty} v_f(x) = 0$. Moreover,
\begin{align*}
\absH{\gradH v_f(x)} &= \absH{f(V(\distH(x))) \sinh(\varrho(V(\distH(x))))^{1-n} n\omega_n \sinh(\distH(x))^{n - 1} \gradH \distH(x)} \\
	&= n\omega_n f(V(\distH(x)))
\end{align*}
for every $x\in\Hn$, thanks to the fact that $\absH{\gradH \distH(x)} = 1$ for every $x\in\Hn\setminus\{0\}$. Therefore, $v_f\in V_0^1X(\Hn)$ for every such function $f$ that belongs to $X(0, \infty)$. Next,  if $f\in\Mpl(0, \infty)\cap \mathcal C_b(0, \infty) \cap L^2(0,\infty)$, then $(\widetilde{H}_2\circ P) f \in \mathcal C^2(0, \infty)\cap L^2(0, \infty)\cap L^\infty(0, \infty)$ and $\lim_{t\to \infty} \widetilde{H}_2 f(t) = 0$. The fact that $(\widetilde{H}_2 \circ P) f \in L^\infty(0, \infty)$ follows from H\"{o}lder's inequality combined with \eqref{thm:reduction_principle_asymp_behavior_2}. As for the fact that $(\widetilde{H}_2 \circ P) f \in L^2(0, \infty)$, we have
\begin{equation*}
\|(\widetilde{H}_2 \circ P) f\|_{L^2(0, \infty)}\lesssim \|Pf\|_{L^2(0, \infty)}\lesssim \|f\|_{L^2(0, \infty)}
\end{equation*}
by weighted Hardy's inequalities \cite{M:72}. Furthermore, it is easy to see that the function $w_f$ defined as
\begin{equation*}
w_f(x) = (\widetilde{H}_2\circ P) f(V(\distH(x))),\ x\in\Hn,
\end{equation*}
belongs to $\mathcal C^1(\Hn)\cap \mathcal C^2(\Hn\setminus\{0\})\cap L^2(\Hn)\cap L^\infty(\Hn)$ and $\lim_{\distH(x) \to \infty} w_f(x) = 0$. Now, considering $w_f$ as a function of $r = |x|$, we have
\begin{align}
\lapH w_f(x) &= \left(\frac{1 - r^2}{2}\right)^2 \left(\frac{\mathrm d^2\tilde{w}_f}{\mathrm d r^2}(r) + \frac{n - 1}{r}\frac{\mathrm d\tilde{w}_f}{\mathrm d r}(r) \right) + (n-2)\frac{1 - r^2}{2}r\frac{\mathrm d\tilde{w}_f}{\mathrm d r}(r) \notag\\
&= \left(\frac{1 - r^2}{2}\right)^2 \frac{\mathrm d^2\tilde{w}_f}{\mathrm d r^2}(r) + \left( \frac{1 - r^2}{2} \frac{n - 1}{r} + (n - 2)r \right) \frac{1 - r^2}{2} \frac{\mathrm d\tilde{w}_f}{\mathrm d r}(r) \label{thm:reduction_principle_LapH_radial}
\end{align}
for every $x\in\Hn\setminus\{0\}$, where
\begin{equation*}
\tilde{w}_f(r) = (\widetilde{H}_2\circ P) f(V(2\arctanh r)),\ r\in(0, 1).
\end{equation*}
Computing the derivatives of $\tilde{w}_f$, we obtain
\begin{equation}\label{thm:reduction_principle_first_derivative_of_T2}
\frac{1 - r^2}{2}\frac{\mathrm d\tilde{w}_f}{\mathrm d r}(r) = -n\omega_n \left(\int_0^{V(2\arctanh r)} f\right) \sinh(2\arctanh r)^{1 - n},
\end{equation}
in which we used the fact that $\varrho(V((2\arctanh r))) = 2\arctanh r$. We also obtain
\begin{align}
\left(\frac{1 - r^2}{2}\right)^2\frac{\mathrm d^2\tilde{w}_f}{\mathrm d r^2}(r) &= -(n\omega_n)^2 f(V(2\arctanh r)) \notag\\
&\quad- \left(\frac{n - 1}{\tanh(2\arctanh r)} - r\right)\frac{1 - r^2}{2}\frac{\mathrm d\tilde{w}_f}{\mathrm d r}(r) \notag\\
&= -(n\omega_n)^2 f(V(2\arctanh r)) \notag\\
&\quad - \left((n - 1)\frac{1 + r^2}{2r} - r\right) \frac{1 - r^2}{2} \frac{\mathrm d\tilde{w}_f}{\mathrm d r}(r), \label{thm:reduction_principle_second_derivative_of_T2}
\end{align}
in which we used \eqref{thm:reduction_principle_first_derivative_of_T2} and the identity $\tanh(2\arctanh r) = 2r/(1 + r^2)$. Plugging \eqref{thm:reduction_principle_first_derivative_of_T2} and \eqref{thm:reduction_principle_second_derivative_of_T2} into \eqref{thm:reduction_principle_LapH_radial} and observing that
\begin{equation*}
\frac{1 - r^2}{2} \frac{n - 1}{r} + (n - 2)r - (n - 1)\frac{1 + r^2}{2r} + r = 0,
\end{equation*}
we arrive at
\begin{equation*}
\lapH w_f(x) = -(n\omega_n)^2 f(V(2\arctanh |x|)) = -(n\omega_n)^2 f(V(\distH(x))).
\end{equation*}

Finally, putting all those observations together, we have proved the following fact: If $f\in\Mpl(0, \infty)\cap \mathcal C_b(0, \infty) \cap L^2(0,\infty) \cap X(0, \infty)$ has support inside $(0, \infty)$, then the function $u_f$ defined as
\begin{equation}\label{thm:reduction_principle_test_function}
u_f(x) = \widetilde{T}_mf(V(\distH(x))),\ x\in\Hn,
\end{equation}
belongs to $V_0^m X(\Hn)$, and we have
\begin{equation}\label{thm:reduction_principle_test_function_norm_of_derivative}
\|\lapH[m]u_f\|_{X(\Hn)} = c_{n,m} \|f(V(\distH(x)))\|_{X(\Hn)} = c_{n,m} \|f\|_{X(0, \infty)},
\end{equation}
where $c_{n,m} = (n\omega_n)^{\lceil m/2\rceil}$. In the last equality, we used the fact that the mapping $\Hn \ni x \mapsto V(\distH(x)) \in [0, \infty)$ is measure preserving (in the sense of \cite[Chapter~2, Definition~7.1]{BS}).

We are now in a position to prove the desired implication. First, we claim that it is sufficient to prove \eqref{thm:reduction_principle_Hardy_ineq} only for nonincreasing functions $f\in\Mpl(0,\infty)$; in other words, it is sufficient to prove
\begin{equation}\label{thm:reduction_principle_Hardy_ineq_restricted}
\|\Tm f^*\|_{Y(0, \infty)} \leq \widetilde{C}_2 \|f\|_{X(0, \infty)} \quad \text{for every $f\in\Mpl(0, \infty)$}.
\end{equation}
If $m$ is even, then \eqref{thm:reduction_principle_Hardy_ineq} reads as
\begin{equation}\label{thm:reduction_principle_Hardy_ineq_m_even}
\|(H_2\circ P)^{k + 1} f\|_{Y(0, \infty)} \leq C_2 \|f\|_{X(0, \infty)} \quad \text{for every $f\in\Mpl(0, \infty)$}.
\end{equation}
The fact that \eqref{thm:reduction_principle_Hardy_ineq_restricted} implies \eqref{thm:reduction_principle_Hardy_ineq_m_even} (with the same constant $C_2 = \widetilde{C}_2$) is an immediate consequence of \eqref{prel:ri:HLg=chiE}. If $m$ is odd, then \eqref{thm:reduction_principle_Hardy_ineq} reads as
\begin{equation}\label{thm:reduction_principle_Hardy_ineq_m_odd}
\|((H_2\circ P)^k\circ H_1) f\|_{Y(0, \infty)} \leq C_2 \|f\|_{X(0, \infty)} \quad \text{for every $f\in\Mpl(0, \infty)$}.
\end{equation}
The situation is considerably more complicated now because the integration in $H_1 f$ is carried out on intervals whose left endpoints are not $0$; therefore, we cannot simply use \eqref{prel:ri:HLg=chiE} this time. Using \eqref{prel:ri:normX''down}, \eqref{prop:properties_of_iterated_operators_H2P_self_adjoint}, \eqref{prel:ri:Rk_Hk_self_adjoint} and \eqref{prel:ri:normX'}, we have
\begin{align}
&\sup_{\substack{f\in\Mpl(0, \infty)\\ \|f\|_{X(0, \infty)}\leq1}} \|((H_2\circ P)^k\circ H_1) f\|_{Y(0, \infty)} \notag\\
&\quad= \sup_{\substack{f, g\in\Mpl(0, \infty)\\ \|f\|_{X(0, \infty)}\leq1, \|g\|_{Y'(0, \infty)}\leq1}} \int_0^\infty g^*(t) ((H_2\circ P)^k\circ H_1) f(t) \d{t} \notag\\
&\quad\leq \Big( \frac{n}{n-2} \Big)^k \sup_{\substack{f, g\in\Mpl(0, \infty)\\ \|f\|_{X(0, \infty)}\leq1, \|g\|_{Y'(0, \infty)}\leq1}} \int_0^\infty (H_2\circ P)^k g^*(t) H_1 f(t) \d{t} \notag\\
&\quad= \Big( \frac{n}{n-2} \Big)^k \sup_{\substack{f, g\in\Mpl(0, \infty)\\ \|f\|_{X(0, \infty)}\leq1, \|g\|_{Y'(0, \infty)}\leq1}} \int_0^\infty (R_1 \circ (H_2\circ P)^k) g^*(t) f(t) \d{t} \notag\\
&\quad= \Big( \frac{n}{n-2} \Big)^k \sup_{\substack{g\in\Mpl(0, \infty)\\ \|g\|_{Y'(0, \infty)}\leq1}} \|(R_1\circ (H_2\circ P)^k) g^*\|_{X'(0, \infty)}. \label{thm:reduction_principle_Hardy_ineq_restricted_eq1}
\end{align}
Since the function $(H_2\circ P)^k g^*$ is nonincreasing (even for $k = 0$), we have
\begin{equation}\label{thm:reduction_principle_Hardy_ineq_restricted_eq2}
\|(R_1\circ (H_2\circ P)^k) g^*\|_{X'(0, \infty)} \leq 4 \sup_{\substack{f\in\Mpl(0, \infty)\\ \|f\|_{X(0, \infty)}\leq1}} \int_0^\infty (R_1 \circ (H_2\circ P)^k) g^*(t) f^*(t) \d{t}
\end{equation}
for every $g\in\Mpl(0, \infty)$ by \cite[Theorem~3.10]{Pe:20} (see also \cite[Theorem~9.5]{CPS:15} and \cref{rem:reduction_principle_Hardy_ineq_equiv_to_reduced}\ref{rem:reduction_principle_Hardy_ineq_equiv_to_reduced_item})\textemdash note that we could not simply use \eqref{prel:ri:normX''down}, because the function $(R_1 \circ (H_2\circ P)^k) g^*$ need not be nonincreasing. Plugging \eqref{thm:reduction_principle_Hardy_ineq_restricted_eq2} into \eqref{thm:reduction_principle_Hardy_ineq_restricted_eq1}, we obtain
\begin{align*}
&\sup_{\substack{f\in\Mpl(0, \infty)\\ \|f\|_{X(0, \infty)}\leq1}} \|((H_2\circ P)^k\circ H_1) f\|_{Y(0, \infty)} \\
&\quad\leq 4 \Big( \frac{n}{n-2} \Big)^k \sup_{\substack{f, g\in\Mpl(0, \infty)\\ \|f\|_{X(0, \infty)}\leq1, \|g\|_{Y'(0, \infty)}\leq1}} \int_0^\infty (R_1 \circ (H_2\circ P)^k) g^*(t) f^*(t) \d{t}.
\end{align*}
Now, going from the end to the beginning in \eqref{thm:reduction_principle_Hardy_ineq_restricted_eq1} with $f$ replaced by $f^*$, we arrive at
\begin{align*}
&\sup_{\substack{f\in\Mpl(0, \infty)\\ \|f\|_{X(0, \infty)}\leq1}} \|((H_2\circ P)^k\circ H_1) f\|_{Y(0, \infty)} \\
&\quad\leq 4\Big( \frac{n}{n-2} \Big)^{2k} \sup_{\substack{f\in\Mpl(0, \infty)\\ \|f\|_{X(0, \infty)}\leq1}} \|((H_2\circ P)^k\circ H_1) f^*\|_{Y(0, \infty)}.
\end{align*}
Therefore, if \eqref{thm:reduction_principle_Hardy_ineq_restricted} holds, so does \eqref{thm:reduction_principle_Hardy_ineq_m_odd} (with $C_2 = 4(n/(n-2))^{2k}\widetilde{C}_2$). Furthermore, it is clearly sufficient to prove \eqref{thm:reduction_principle_Hardy_ineq_restricted} for bounded nonincreasing functions $f\in\Mpl(0, \infty)\cap X(0, \infty)$, for $\{\min\{j, f^*\}\}_{j = 1}^\infty$ is a nondecreasing sequence converging pointwise to $f^*$. Moreover, we may also assume that $f^*\in\mathcal C(0, \infty)$ because there is a nondecreasing sequence of nonincreasing nonnegative continuous functions converging pointwise a.e.~to $f^*$ (e.g., \cite[Proposition~2.1]{GS:13}). Finally, for $j\in\N$, let $\eta_j\in\mathcal C(0, \infty)$ be a cutoff function such that $0\leq\eta_j\leq 1$, $\eta_j\equiv1$ in $[1/j, j]$ and $\eta_j\equiv 0$ outside $[1/(j+1), j+1]$. Note that, if $f^*\in\mathcal C_b(0, \infty)$, then the functions $\eta_j f^*$, $j\in\N$, belong to $\Mpl(0, \infty) \cap \mathcal C_b(0, \infty) \cap X(0, \infty) \cap L^2(0, \infty)$ and have support inside $(0, \infty)$; moreover, the sequence $\{\eta_j f^*\}_{j = 1}^\infty$ is nondecreasing and converges pointwise to $f^*$. Therefore, \eqref{thm:reduction_principle_Hardy_ineq_restricted} will follow as soon as we prove that
\begin{equation}\label{thm:reduction_principle_Hardy_ineq_restricted_to_regular}
\|\Tm h\|_{Y(0, \infty)} \leq \widetilde{C}_2 \|h\|_{X(0, \infty)}
\end{equation}
for every $h\in\Mpl(0, \infty) \cap X(0, \infty) \cap \mathcal C_b(0, \infty) \cap L^2(0, \infty)$ having support inside $(0, \infty)$. The rest is easy now. Assume that \eqref{thm:reduction_principle_Sob_ineq} is valid. Let $h\in\Mpl(0, \infty) \cap X(0, \infty) \cap \mathcal C_b(0, \infty) \cap L^2(0, \infty)$ having support inside $(0, \infty)$, and let $u_h$ be the function defined by \eqref{thm:reduction_principle_test_function} with $f$ replaced by $h$. We have $u_h\in V_0^m X(\Hn)$. Thanks to \eqref{thm:reduction_principle_equivalence_Tm}, \eqref{thm:reduction_principle_test_function_norm_of_derivative} and \eqref{thm:reduction_principle_Sob_ineq}, we arrive at
\begin{align*}
\|\Tm h\|_{Y(0, \infty)} &\approx \|\widetilde{T}_m h\|_{Y(0, \infty)} = \|u_h\|_{Y(\Hn)} \\
&\leq C_1 \|\gradH[m] u_h\|_{X(\Hn)} = (n\omega_n)^{\lceil \frac{m}{2} \rceil} C_1 \|h\|_{X(0, \infty)}.
\end{align*}
Hence, \eqref{thm:reduction_principle_Hardy_ineq_restricted_to_regular} is valid with a constant $\widetilde{C}_2$, depending only on $C_1$, $m$ and $n$.

\underline{\ref{thm:reduction_principle_Hardy_item} implies \ref{thm:reduction_principle_Sobolev_item}:} First, let $m$ be even. Recall that $k = \lceil m/2 - 1 \rceil$. Iterating \eqref{prel:potential_estimate} $m/2 = (k + 1)$ times and using \eqref{thm:reduction_principle_Hardy_ineq} together with \eqref{thm:reduction_principle_equivalence_Tm}, we obtain
\begin{align*}
\|u\|_{Y(\Hn)} &= \|u^*\|_{Y(0, \infty)} \leq (n\omega_n)^{-m} \|(\widetilde{H}_2\circ P)^{k + 1} ((\lapH[k+1] u)^*) \|_{Y(0, \infty)} \\
&\approx \|\Tm ((\lapH[k+1] u)^*)\|_{Y(0, \infty)} \leq C_2 \|(\lapH[k+1] u)^*\|_{X(0, \infty)} = \|\gradH[m] u\|_{X(\Hn)}
\end{align*}
for every $u\in V_0^m X(\Hn)$; that is, \eqref{thm:reduction_principle_Sob_ineq} is valid with a constant $C_1$, which depends only on $C_2$, $m$ and $n$. Now, let $m$ be odd, and let $u\in V_0^m X(\Hn)$. By iterating \eqref{prel:potential_estimate} $k$ times, we have
\begin{equation}\label{thm:reduction_principle_odd_case_potential_estimate}
\|u\|_{Y(\Hn)} \leq (n\omega_n)^{-2k} \|(\widetilde{H}_2\circ P)^k ((\lapH[k] u)^*) \|_{Y(0, \infty)}.
\end{equation}
Since $\lapH[k] u\in V_0^1 X(\Hn)$, we may use \cref{prop:PS_inequality} to obtain
\begin{align}
(\lapH[k] u)^*(t) &= \int_t^\infty \Big( -\frac{\mathrm d (\lapH[k] u)^*}{\mathrm ds}(s) \Big) \d{s} \notag\\
&= \widetilde{H}_1\Big( -\frac{\mathrm d (\lapH[k] u)^*}{\mathrm ds}(s) \sinh(\varrho(s))^{n - 1} \Big)(t) \label{thm:reduction_principle_rearr_laplace_as_integral}
\end{align}
for every $t\in(0, \infty)$, and
\begin{align}
\Big\| -\frac{\mathrm d (\lapH[k] u)^*}{\mathrm dt}(t)\sinh(\varrho(t))^{n - 1} \Big\|_{X(0, \infty)} &\leq (n\omega_n)^{-1} \|\gradH \lapH[k] u\|_{X(\Hn)} \notag\\
&= (n\omega_n)^{-1} \|\gradH[m] u\|_{X(\Hn)}. \label{thm:reduction_principle_rearr_laplace_PS_used}
\end{align}
Finally, combining \eqref{thm:reduction_principle_odd_case_potential_estimate} and \eqref{thm:reduction_principle_rearr_laplace_as_integral} with \eqref{thm:reduction_principle_equivalence_Tm} and \eqref{thm:reduction_principle_Hardy_ineq}, and using \eqref{thm:reduction_principle_rearr_laplace_PS_used} in the last step, we arrive at
\begin{align*}
\|u\|_{Y(\Hn)} &\leq (n\omega_n)^{-2k} \Big\| \widetilde{T}_m\Big( -\frac{\mathrm d (\lapH[k] u)^*}{\mathrm ds}(s)\sinh(\varrho(s))^{n - 1}\Big) \Big\|_{Y(0, \infty)} \\
&\lesssim (n\omega_n)^{-2k} C_2 \Big\| -\frac{\mathrm d (\lapH[k] u)^*}{\mathrm dt}(t)\sinh(\varrho(t))^{n - 1} \Big\|_{X(0, \infty)} \\
&\leq (n\omega_n)^{-(2k + 1)} C_2 \|\gradH[m] u\|_{X(\Hn)}.
\end{align*}
Hence \eqref{thm:reduction_principle_Sob_ineq} is valid with a constant $C_1$, which depends only on $C_2$, $m$ and $n$.

\underline{\ref{thm:reduction_principle_Hardy_item} implies \ref{thm:reduction_principle_Hardy_simplified_item}:} Let $\nu_{m, X'}$ and $\sigma_{m, X'}$ be the functionals from \cref{prop:operator_induced_norms}. Note that, thanks to \eqref{prop:properties_of_iterated_operators_H2P_self_adjoint} and \eqref{prel:ri:Rk_Hk_self_adjoint}, combined with \eqref{prel:ri:normX''down} and \eqref{prel:ri:normX''}, we have
\begin{align}
\sup_{\substack{f\in\Mpl(0, \infty)\\ \|f\|_{X(0, \infty)}\leq1}} \|T_m f\|_{Y(0, \infty)} &\approx \sup_{\substack{g\in\Mpl(0, \infty)\\ \|g\|_{Y'(0, \infty)}\leq1}} \nu_{m, X'}(g), \label{thm:reduction_principle_Hardy_ineq_dual}
\intertext{in which the multiplicative constants depend only on $m$ and $n$, and}
\sup_{\substack{f\in\Mpl(0, \infty)\\ \|f\|_{X(0, \infty)}\leq1}} \|S_m f\|_{Y(0, \infty)} &= \sup_{\substack{g\in\Mpl(0, \infty)\\ \|g\|_{Y'(0, \infty)}\leq1}} \sigma_{m, X'}(g). \label{thm:reduction_principle_Hardy_ineq_simplified_dual}
\end{align}
Owing to \eqref{prop:operator_induced_norms_sigma_lesssim_nu}, we have $\sigma_{m, X'}(g) \lesssim \nu_{m, X'}(g)$ for every $g\in\Mpl(0, \infty)$. Hence \ref{thm:reduction_principle_Hardy_item} implies \ref{thm:reduction_principle_Hardy_simplified_item}.

\uline{\ref{thm:reduction_principle_Hardy_simplified_item} implies \ref{thm:reduction_principle_Hardy_item} provided that either $m\geq2$ and $f\mapsto f^{**}$ is bounded on $X(0, \infty)$ or $m = 1$:} When $m = 1$, there is nothing to prove, for $T_1 = S_1$. When $m\geq2$ and $f\mapsto f^{**}$ is bounded on $X(0, \infty)$, the statement follows immediately from \cref{prop:operator_induced_norms_equivalences}\ref{prop:operator_induced_norms_equivalences_nu_sigma_item} combined with \eqref{thm:reduction_principle_Hardy_ineq_dual} and \eqref{thm:reduction_principle_Hardy_ineq_simplified_dual}.
\end{proof}

\begin{proof}[Proof of \cref{thm:optimal_target_general_theorem}]
First, note that the functional $\|\cdot\|_{Y_{m, X}'(0, \infty)}$ is indeed a rearrangement\hyp{}invariant function norms thanks to \cref{prop:operator_induced_norms}. Moreover, we have $\|\cdot\|_{Y_{m, X}'(0, \infty)} = \nu_{m, X'}(\cdot)$. In particular, the rearrangement\hyp{}invariant function space $Y_{m, X}(\Hn)$ is well defined.

Second, owing to \eqref{thm:reduction_principle_Hardy_ineq_dual} with $Y(\Hn) = Y_{m, X}(\Hn)$, we have
\begin{equation*}
\sup_{\substack{f\in\Mpl(0, \infty)\\ \|f\|_{X(0, \infty)}\leq1}} \|T_m f\|_{Y_{m, X}(0, \infty)} < \infty.
\end{equation*}
In other words, \eqref{thm:reduction_principle_Hardy_ineq} is valid with $Y(\Hn) = Y_{m, X}(\Hn)$. Hence, by \cref{thm:reduction_principle}, the inequality \eqref{intro:Sob_ineq} is valid with $Y(\Hn) = Y_{m, X}(\Hn)$.

Third, let $Y(\Hn)$ be a rearrangement\hyp{}invariant function space such that \eqref{intro:Sob_ineq} is valid. We need to show that $Y_{m, X}(\Hn) \hookrightarrow Y(\Hn)$. Owing to \cref{thm:reduction_principle} combined with \eqref{thm:reduction_principle_Hardy_ineq_dual}, we have
\begin{equation}\label{thm:optimal_target_general_theorem_aux}
\sup_{\substack{g\in\Mpl(0, \infty)\\ \|g\|_{Y'(0, \infty)}\leq1}} \|g\|_{Y_{m, X}'(0, \infty)} \approx \sup_{\substack{f\in\Mpl(0, \infty)\\ \|f\|_{X(0, \infty)}\leq1}} \|T_m f\|_{Y(0, \infty)} < \infty;
\end{equation}
in other words, $Y'(\Hn) \hookrightarrow Y_{m, X}'(\Hn)$. Hence $Y_{m, X}(\Hn) \hookrightarrow Y(\Hn)$ by \eqref{prel:ri:XtoYiffY'toX'}. Furthermore, it follows from \eqref{thm:optimal_target_general_theorem_aux} that $\|\chi_{(0, 1)}\|_{Y_{m, X}'(0, \infty)} < \infty$. Combining that with \eqref{prop:operator_induced_norms_lower_bound_on_R2k_chi_0_1} and \eqref{prop:operator_induced_norms_sigma_lesssim_nu}, we see that the condition \eqref{thm:optimal_target_general_theorem_condition} is satisfied if there is a rearrangement\hyp{}invariant function space $Y(\Hn)$ such that \eqref{intro:Sob_ineq} is valid.
\end{proof}

\begin{proof}[Proof of \cref{thm:optimal_target_general_theorem_simplified_M_bounded_X,thm:optimal_target_general_theorem_simplified_M_bounded_Xasoc,thm:optimal_target_general_theorem_simplified_M_bounded_on_both_X_and_Xasoc}]
In all three theorems, the fact that the functional $\|\cdot\|_{Y_{m, X}'(0, \infty)}$ defined in the respective theorem is a rearrangement\hyp{}invariant function norm follows from \cref{prop:operator_induced_norms}.

We start with \cref{thm:optimal_target_general_theorem_simplified_M_bounded_X}. For $m = 1$, the statement of the theorem is the same as that of \cref{thm:optimal_target_general_theorem}, which was already proved. For $m\geq2$, the statement follows from \cref{thm:optimal_target_general_theorem} combined with \cref{prop:operator_induced_norms_equivalences}\ref{prop:operator_induced_norms_equivalences_nu_sigma_item}.

Next, we turn our attention to \cref{thm:optimal_target_general_theorem_simplified_M_bounded_Xasoc}. For $m\neq 2$, the statement follows from \cref{thm:optimal_target_general_theorem} combined with \eqref{prop:operator_induced_norms_equivalences_nu_mu}.

Finally, as for \cref{thm:optimal_target_general_theorem_simplified_M_bounded_on_both_X_and_Xasoc}, note that the condition \eqref{thm:optimal_target_general_theorem_condition} is satisfied (recall  \cref{rem:boundedness_P_on_Xasoc_implies_necessary_cond}\ref{rem:boundedness_P_on_Xasoc_implies_necessary_cond_explanation_item}). Now, combining \cref{thm:optimal_target_general_theorem} with \eqref{prop:operator_induced_norms_equivalences_nu_sigma} and \eqref{prop:operator_induced_norms_equivalences_sigma_lambda}, we obtain the statement.
\end{proof}

\begin{proof}[Proof of \cref{thm:optimal_target_eucl_and_supercritical}]
We start with \ref{thm:optimal_target_eucl_and_supercritical_subcritical_item}. Note that the functional $\|\cdot\|_{Z_{m, X}'(0, \infty)}$ is indeed a rearrangement\hyp{}invariant function norm (e.g., see \cite[Theorem~4.4]{EMMP:20}), and so the rearrangement\hyp{}invariant function space $Z_{m, X}(\Hn)$ (in turn, $Y_{m, X}(\Hn)$, too), is well defined. Set $k = \lceil m/2 - 1 \rceil$ and let $\beta$ be defined by \eqref{thm:optimal_target_general_theorem_beta}. Note that $2k + \beta = m$. Recall that the operator $S_m$ is defined by \eqref{operator_Sm_def}, and the kernel $K_k(\cdot, \cdot)$ is defined by \eqref{E:def_kernel_K} with $j$ replaced by $k$.

Now, we will show that \eqref{intro:Sob_ineq} is valid with $Y_{m, X}(\Hn) = X(\Hn) \cap Z_{m, X}(\Hn)$. First, since $\phi_\alpha(t)\leq t^{-1}$, $\alpha\in[0, n)$, for every $t\in(0, \infty)$, we have
\begin{equation*}
S_m f(t) \leq Q^{\lceil \frac{m}{2} \rceil} f(t) \quad \text{for every $t\in(0, \infty)$ and every $f\in\Mpl(0, \infty)$}.
\end{equation*}
Hence
\begin{equation}\label{thm:optimal_target_eucl_and_supercritical_bounded_into_X}
\|S_m f\|_{X(0, \infty)} \leq \|Q^{\lceil \frac{m}{2} \rceil} f\|_{X(0, \infty)} \leq \|Q\|^{\lceil \frac{m}{2} \rceil}_{X(0, \infty)} \|f\|_{X(0, \infty)}
\end{equation}
for every $f\in\Mpl(0, \infty)$, in which $\|Q\|_{X(0, \infty)}$ stands for the operator norm of $Q$ on $X(0, \infty)$. Second, since $\phi_\alpha(t)\leq t^{-1 + \frac{\alpha}{n}}$, $\alpha\in[0, n)$, for every $t\in(0, \infty)$, we also have
\begin{equation*}
S_m f(t) \leq \frac1{k!} \int_t^\infty f(s) s^{-1 + \frac{\beta}{n}} \Big( \int_t^s \tau^{-1 + \frac{2}{n}} \d{\tau} \Big)^k \leq \frac{(\frac{n}{2})^k}{k!} \int_t^\infty f(s)s^{-1 + \frac{2k + \beta}{n}} \d{s}
\end{equation*}
for every $f\in\Mpl(0, \infty)$ and every $t\in(0, \infty)$. Hence
\begin{equation}\label{thm:optimal_target_eucl_and_supercritical_bounded_into_Eucl_optimal_upper_bound_on_Sm}
\|S_m f\|_{Z_{m, X}(0, \infty)} \leq c_{m, n} \Big\| \int_t^\infty f(s) s^{-1 + \frac{m}{n}} \d{s} \Big\|_{Z_{m, X}(0, \infty)}
\end{equation}
for every $f\in\Mpl(0, \infty)$. Here $c_{m, n}$ is a constant depending only on $m$ and $n$. Furthermore, we also have
\begin{equation}\label{thm:optimal_target_eucl_and_supercritical_bounded_into_Eucl_optimal_norm_of_Eucl_Hardy}
\sup_{\substack{f\in\Mpl(0, \infty)\\ \|f\|_{X(0, \infty)}\leq1}} \Big \|\int_t^\infty f(s) s^{-1 + \frac{m}{n}} \d{s} \Big\|_{Z_{m, X}(0, \infty)} = 1.
\end{equation}
Indeed, using \eqref{prel:ri:normX''down}, \eqref{prel:ri:normX'} and the definition of $\|\cdot\|_{Z_{m, X}'(0, \infty)}$, we observe that
\begin{align*}
&\sup_{\substack{f\in\Mpl(0, \infty)\\ \|f\|_{X(0, \infty)}\leq1}} \Big \|\int_t^\infty f(s) s^{-1 + \frac{m}{n}} \d{s} \Big\|_{Z_{m, X}(0, \infty)} \notag\\
&= \sup_{\substack{f, g\in\Mpl(0, \infty)\\ \|f\|_{X(0, \infty)}\leq 1, \|g\|_{Z_{m, X}'(0, \infty)}\leq1}} \int_0^\infty g^*(t) \int_t^\infty f(s) s^{-1 + \frac{m}{n}} \d{s} \d{t} \notag\\
&= \sup_{\substack{f, g\in\Mpl(0, \infty)\\ \|f\|_{X(0, \infty)}\leq 1, \|g\|_{Z_{m, X}'(0, \infty)}\leq1}} \int_0^\infty f(s) s^{-1 + \frac{m}{n}} \int_0^s g^*(t) \d{t} \d{s} \notag\\
&= \sup_{\substack{g\in\Mpl(0, \infty)\\ \|g\|_{Z_{m, X}'(0, \infty)}\leq1}} \|t^{\frac{m}{n}}g^{**}(t)\|_{X'(0, \infty)} = \sup_{\substack{g\in\Mpl(0, \infty)\\ \|g\|_{Z_{m, X}'(0, \infty)}\leq1}} \|g\|_{Z_{m, X}'(0, \infty)}.
\end{align*}
Hence, thanks to \eqref{thm:optimal_target_eucl_and_supercritical_bounded_into_Eucl_optimal_upper_bound_on_Sm} and \eqref{thm:optimal_target_eucl_and_supercritical_bounded_into_Eucl_optimal_norm_of_Eucl_Hardy}, we have
\begin{equation}\label{thm:optimal_target_eucl_and_supercritical_bounded_into_Z}
\|S_m f\|_{Z_{m, X}(0, \infty)} \leq c_{m, n} \|f\|_{X(0, \infty)} \quad \text{for every $f\in\Mpl(0, \infty)$}.
\end{equation}
Finally, combining \eqref{thm:optimal_target_eucl_and_supercritical_bounded_into_X} and \eqref{thm:optimal_target_eucl_and_supercritical_bounded_into_Z} with \cref{thm:reduction_principle}, we obtain
\begin{equation*}
\|u\|_{X(\Hn)} + \|u\|_{Z_{m, X}(\Hn)} \leq C \|\gradH[m] u\|_{X(\Hn)} \quad \text{for every $u\in V_0^mX(\Hn)$}.
\end{equation*}

It remains to prove the optimality of $Y_{m, X}(\Hn)$. Owing to \cref{thm:optimal_target_general_theorem_simplified_M_bounded_on_both_X_and_Xasoc} (and \cref{rem:optimal_target_general_theorem_simplified_M_bounded_on_both_X_and_Xasoc_m_one_two_already_known}\ref{rem:optimal_target_general_theorem_simplified_M_bounded_on_both_X_and_Xasoc_m_one_two_already_known_m_1_2_item}), we know that the rearrangement\hyp{}invariant function space $\widetilde{Y}_{m, X}(\Hn)$ whose associate function norm is defined as
\begin{equation*}
\|g\|_{\widetilde{Y}_{m, X}'(0, \infty)} = \|R_m g^*\|_{X'(0, \infty)},\ g\in\Mpl(0, \infty),
\end{equation*}
is the optimal target space in \eqref{intro:Sob_ineq}. Therefore, it is sufficient to show that
\begin{equation*}
X(0, \infty) \cap Z_{m, X}(0, \infty) \hookrightarrow \widetilde{Y}_{m, X}(0, \infty).
\end{equation*}
To that end, it is clearly sufficient to prove that
\begin{align}
\|f^*\chi_{(0, 1)}\|_{\widetilde{Y}_{m, X}(0, \infty)} &\lesssim \|f\|_{Z_{m, X}(0, \infty)} \quad \text{for every $f\in\Mpl(0, \infty)$} \label{thm:optimal_target_eucl_and_supercritical_W_near_0_into_Z}\\
\intertext{and}
\|f^*\chi_{(1, \infty)}\|_{\widetilde{Y}_{m, X}(0, \infty)} &\lesssim \|f\|_{X(0, \infty)} \quad \text{for every $f\in\Mpl(0, \infty)$}. \label{thm:optimal_target_eucl_and_supercritical_W_near_infty_into_X}
\end{align}
We start by proving the latter. Thanks to \cref{thm:reduction_principle}, we have
\begin{equation}\label{thm:optimal_target_eucl_and_supercritical_optimality_Hardy_bounded_from_X_to_W}
\|S_m f\|_{\widetilde{Y}_{m, X}(0, \infty)} \lesssim \|f\|_{X(0, \infty)} \quad \text{for every $f\in\Mpl(0, \infty)$}.
\end{equation}
Using \eqref{prop:iterated_operators_closed_form:E5} and \eqref{E:kernel_K_monotonicity_second_var}, we have
\begin{align*}
f^*(t)\chi_{(1, \infty)}(t) &\approx f^*(t)\chi_{(1, \infty)}(t) \int_{\frac{t}{2}}^t \phi_m(s) \d{s} \\
&\approx f^*(t)\chi_{(1, \infty)}(t) \int_{\frac{t}{2}}^t \phi_m(s) K_k\Big( \frac{t}{4}, \frac{t}{2} \Big) \d{s} \\
&\leq \int_{\frac{t}{4}}^\infty f^*(s) \phi_m(s) K_k\Big( \frac{t}{4}, s \Big) \d{s}
\end{align*}
for all $f\in\Mpl(0, \infty)$ and $t\in(0, \infty)$. Furthermore, thanks to this, the boundedness of the dilation operator $D_4$ on $\widetilde{Y}_{m, X}(0, \infty)$ (see \eqref{prel:ri:dilation}), and \eqref{prop:iterated_operators_closed_form:E-HH}, we obtain
\begin{equation}\label{thm:optimal_target_eucl_and_supercritical_optimality_norm_W_near_infinity}
\|f^*\chi_{(1, \infty)}\|_{\widetilde{Y}_{m, X}(0, \infty)} \lesssim \| S_m f^* \|_{\widetilde{Y}_{m, X}(0, \infty)} \quad \text{for every $f\in\Mpl(0, \infty)$}.
\end{equation}
Hence, by combining \eqref{thm:optimal_target_eucl_and_supercritical_optimality_Hardy_bounded_from_X_to_W} with \eqref{thm:optimal_target_eucl_and_supercritical_optimality_norm_W_near_infinity}, we obtain \eqref{thm:optimal_target_eucl_and_supercritical_W_near_infty_into_X}. Now, the proof of the optimality of $Y_{m, X}(\Hn)$ will be complete once we prove \eqref{thm:optimal_target_eucl_and_supercritical_W_near_0_into_Z}. To that end, using \eqref{prel:ri:normX''down} twice together with the definitions of $\|\cdot\|_{Z_{m, X}'(0, \infty)}$ and $\|\cdot\|_{\widetilde{Y}_{m, X}'(0, \infty)}$, we observe that
\begin{align}
\sup_{\substack{f\in\Mpl(0, \infty)\\ \|f\|_{Z_{m, X}(0, \infty)}\leq1}}\|f^*\chi_{(0,1)}\|_{\widetilde{Y}_{m, X}(0, \infty)} &= \sup_{\substack{g\in\Mpl(0, \infty)\\ \|g\|_{\widetilde{Y}_{m, X}'(0, \infty)}\leq1}}\|g^*\chi_{(0,1)}\|_{Z_{m, X}'(0, \infty)} \notag \\
&\leq \sup_{\substack{g\in\Mpl(0, \infty)\\ \|g\|_{\widetilde{Y}_{m, X}'(0, \infty)}\leq1}}\|t^{\frac{m}{n}}g^{**}(t)\chi_{(0,1)}(t)\|_{X'(0, \infty)} \notag \\
&\quad + \sup_{\substack{g\in\Mpl(0, \infty)\\ \|g\|_{\widetilde{Y}_{m, X}'(0, \infty)}\leq1}}\Big\| t^{-1 + \frac{m}{n}}\Big(\int_0^1 g^*\Big) \chi_{(1, \infty)}(t) \Big\|_{X'(0, \infty)} \notag\\
&\leq 1 + \sup_{\substack{g\in\Mpl(0, \infty)\\ \|g\|_{\widetilde{Y}_{m, X}'(0, \infty)}\leq1}}\Big\| t^{-1 + \frac{m}{n}} \Big(\int_0^1 g^*\Big) \chi_{(1, \infty)}(t) \Big\|_{X'(0, \infty)}. \label{thm:optimal_target_eucl_and_supercritical_W_near_0_into_Z_upper_bound}
\end{align}
Using the H\"older inequality \eqref{prel:ri:holder} and the assumption \eqref{thm:optimal_target_eucl_and_supercritical_p_(1_n/m)_condition}, we obtain
\begin{align*}
&\sup_{\substack{g\in\Mpl(0, \infty)\\ \|g\|_{\widetilde{Y}_{m, X}'(0, \infty)}\leq1}}\Big\| t^{-1 + \frac{m}{n}} \Big(\int_0^1 g^*\Big) \chi_{(1, \infty)}(t) \Big\|_{X'(0, \infty)} \\
&\leq \|t^{-1 + \frac{m}{n}}\chi_{(1, \infty)}(t)\|_{X'(0, \infty)} \|\chi_{(0, 1)}\|_{\widetilde{Y}_{m, X}(0, \infty)} \sup_{\substack{g\in\Mpl(0, \infty)\\ \|g\|_{\widetilde{Y}_{m, X}'(0, \infty)}\leq1}} \|g\|_{\widetilde{Y}_{m, X}'(0, \infty)} \\
&< \infty.
\end{align*}
Hence, combining that with \eqref{thm:optimal_target_eucl_and_supercritical_W_near_0_into_Z_upper_bound}, we obtain \eqref{thm:optimal_target_eucl_and_supercritical_W_near_0_into_Z}.

Finally, assume that \eqref{thm:optimal_target_general_theorem_condition} is satisfied but \eqref{thm:optimal_target_eucl_and_supercritical_p_(1_n/m)_condition} is not. Thanks to \cref{thm:optimal_target_general_theorem} (or, under some additional assumptions, \cref{thm:optimal_target_general_theorem_simplified_M_bounded_X,thm:optimal_target_general_theorem_simplified_M_bounded_Xasoc,thm:optimal_target_general_theorem_simplified_M_bounded_on_both_X_and_Xasoc}), there is an optimal target space for $X(\Hn)$ in \eqref{intro:Sob_ineq}. There are several ways of proving that the optimal target space is not equivalent to $Z_{m, X}(\Hn) \cap X(\Hn)$, but arguably the most straightforward one is to observe that $\|\chi_{(0, a)}\|_{Z_{m, X}'(0, \infty)} = \infty$ for every $a>0$ if \eqref{thm:optimal_target_eucl_and_supercritical_p_(1_n/m)_condition} is not satisfied. In other words, $\|\cdot\|_{Z_{m, X}'(0, \infty)}$ is not even a rearrangement\hyp{}invariant function norm (in turn, neither is $\|\cdot\|_{Z_{m, X}(0, \infty)}$). This completes the proof of \ref{thm:optimal_target_eucl_and_supercritical_subcritical_item}.

We now turn our attention to \ref{thm:optimal_target_eucl_and_supercritical_supercritical_item}. For the most part, the proof is similar to that of \ref{thm:optimal_target_eucl_and_supercritical_subcritical_item}\textemdash in fact, simpler. First, we know from \eqref{thm:optimal_target_eucl_and_supercritical_bounded_into_X} that the operator $S_m$ is bounded on $X(0, \infty)$. We need to show that $S_m$ is also bounded from $X(0, \infty)$ to $L^\infty(0, \infty)$. To that end, note that
\begin{equation}\label{thm:optimal_target_eucl_and_supercritical_optimality_Sm_in_Linfty}
\|S_mf\|_{L^\infty(0, \infty)} \approx \int_0^\infty f(t) \phi_m(t) \big(\chi_{(0,1)}(t) + \log(et)^k\chi_{(1, \infty)}(t)\big) \d{t}
\end{equation}
for every $f\in\Mpl(0, \infty)$, thanks to \eqref{prop:iterated_operators_closed_form:E-HH} and \eqref{E:kernel_K_monotonicity_first_var}.
Using this, the H\"older inequality \eqref{prel:ri:holder} and the assumption \eqref{thm:optimal_target_eucl_and_supercritical_p_(n/m_infty)_condition}, we obtain
\begin{align*}
\sup_{\substack{f\in\Mpl(0, \infty)\\ \|f\|_{X(0, \infty)}\leq1}} \|S_mf\|_{L^\infty(0, \infty)} \lesssim \Big\|\phi_m(t) \big( \chi_{(0,1)}(t) + \log(es)^k\chi_{(1, \infty)}(t) \big) \Big\|_{X'(0, \infty)} < \infty.
\end{align*}
Owing to \cref{thm:reduction_principle}, it follows that \eqref{intro:Sob_ineq} is valid with $Y_{m, X}(\Hn) = X(\Hn) \cap L^\infty(\Hn)$. As for the optimality, let $\widetilde{Y}_{m, X}(\Hn)$ be as in the proof of \ref{thm:optimal_target_eucl_and_supercritical_subcritical_item}. We need to show that $X(\Hn) \cap L^\infty(\Hn) \hookrightarrow \widetilde{Y}_{m, X}(\Hn)$. As in the proof of \ref{thm:optimal_target_eucl_and_supercritical_subcritical_item}, it is sufficient to prove \eqref{thm:optimal_target_eucl_and_supercritical_W_near_infty_into_X} and \eqref{thm:optimal_target_eucl_and_supercritical_W_near_0_into_Z} with $Z_{m, X}(0, \infty)$ replaced by $L^\infty(0, \infty)$. The validity of \eqref{thm:optimal_target_eucl_and_supercritical_W_near_infty_into_X} can be proved exactly the same way as before. Therefore, all that remains is to prove that
\begin{equation*}
\sup_{\substack{f\in\Mpl(0, \infty)\\ \|f\|_{L^\infty(0, \infty)}\leq1}}\|f^*\chi_{(0, 1)}\|_{\widetilde{Y}_{m, X}(0, \infty)} < \infty,
\end{equation*}
which is easy, for
\begin{equation*}
\sup_{\substack{f\in\Mpl(0, \infty)\\ \|f\|_{L^\infty(0, \infty)}\leq1}}\|f^*\chi_{(0, 1)}\|_{\widetilde{Y}_{m, X}(0, \infty)} = \|\chi_{(0, 1)}\|_{\widetilde{Y}_{m, X}(0, \infty)} < \infty.
\end{equation*}

Finally, assume that \eqref{thm:optimal_target_general_theorem_condition} is satisfied. Thanks to \cref{thm:optimal_target_general_theorem} (or, under some additional assumptions, \cref{thm:optimal_target_general_theorem_simplified_M_bounded_X,thm:optimal_target_general_theorem_simplified_M_bounded_Xasoc,thm:optimal_target_general_theorem_simplified_M_bounded_on_both_X_and_Xasoc}), there is an optimal target space for $X(\Hn)$ in \eqref{intro:Sob_ineq}. Assume that \eqref{intro:Sob_ineq} is valid with a rearrangement\hyp{}invariant function space $Y(\Hn)$ such that $Y(\Hn) \subseteq L^\infty(\Hn)$. In particular, it is valid with $L^\infty(\Hn)$ instead of $Y(\Hn)$. Hence, owing to \eqref{prel:ri:normX'}, \eqref{thm:optimal_target_eucl_and_supercritical_optimality_Sm_in_Linfty}, and \cref{thm:reduction_principle}, we have
\begin{align*}
\|t^{-1 + \frac{m}{n}}\chi_{(0,1)}(t)\|_{X'(0, \infty)} &=  \sup_{\substack{f\in\Mpl(0, \infty)\\ \|f\|_{X(0, \infty)}\leq1}} \int_0^\infty f(t) t^{-1 + \frac{m}{n}}\chi_{(0,1)}(t) \d{t}\\
&\lesssim \sup_{\substack{f\in\Mpl(0, \infty)\\ \|f\|_{X(0, \infty)}\leq1}} \|S_mf\|_{L^\infty(0, \infty)} < \infty.
\end{align*}
This together with the validity of \eqref{thm:optimal_target_general_theorem_condition} implies that \eqref{thm:optimal_target_eucl_and_supercritical_p_(n/m_infty)_condition} is satisfied.
\end{proof}

\begin{proof}[Proof of \cref{thm:examples_LZ}]
We begin by noting that, in all the cases considered in the theorem, $X(\Hn) = L^{p, q; \A}(\Hn)$ is (equivalent to) a rearrangement\hyp{}invariant function space, and that we have (recall \eqref{thm:examples_LZ_X_asoc})
\begin{equation*}
X'(0, \infty) = L^{p', q'; -\A}(0, \infty).
\end{equation*}
Set $k = \lceil m/2 - 1 \rceil$.

First, assume that either $m = 1$, $p=q=1$, $\alpha_0\geq0$, and $\alpha_\infty\leq 0$ or $p\in(1, n/m)$. In this case, we may use \cref{thm:optimal_target_eucl_and_supercritical}\ref{thm:optimal_target_eucl_and_supercritical_subcritical_item} to obtain
\begin{equation*}
 Y_{m, X}(\Hn) = Z_{m, X}(\Hn) \cap X(\Hn),
\end{equation*}
where $Z_{m, X}(\Hn)$ is a rearrangement\hyp{}invariant function space whose associate function norm is defined as
\begin{equation*}
\|g\|_{Z_{m, X}'(0, \infty)} = \|t^{\frac{m}{n}} g^{**}(t)\|_{L^{p', q'; -\A}(0, \infty)},\ g\in\Mpl(0, \infty).
\end{equation*}
Combining that with \cite[Theorem~5.1]{M:21}, it is easy to see that
\begin{equation*}
\|f\|_{Y_{m, X}(0, \infty)} \approx \|f\|_{\Lambda^q_{v_1}(0, \infty)} \quad \text{for every $f\in\Mpl(0, \infty)$}.
\end{equation*}

Second, assume that either $p = n/m$, $q = 1$, and $\alpha_0\geq0$ or $p = n/m$, $q \in (1, \infty]$, and $\alpha_0 > \frac1{q'}$, or $p\in(n/m, \infty)$. In this case, the statement follows immediately from \cref{thm:optimal_target_eucl_and_supercritical}\ref{thm:optimal_target_eucl_and_supercritical_supercritical_item}.

Third, assume that $p = n/m$ and either $q = 1$ and $\alpha_0 < 0$ or $q\in(1, \infty]$ and $\alpha_0 \leq \frac1{q'}$. We claim that
\begin{equation}\label{thm:examples_LZ_p=n/m_not_supercritical_norm_asoc}
\|g\|_{Y_{m, X}'(0, \infty)} \approx \big\| \min\{t^{1 - \frac1{q'}} \ell^{-\alpha_0}(t), t^{1 - \frac{m}{n} - \frac1{q'}} \ell^{-\alpha_\infty}(t)\}g^{**}(t) \big\|_{L^{q'}(0, \infty)}
\end{equation}
for every $g\in\Mpl(0, \infty)$. To that end, thanks to \cref{thm:optimal_target_general_theorem_simplified_M_bounded_on_both_X_and_Xasoc} (see also \cref{rem:optimal_target_general_theorem_simplified_M_bounded_on_both_X_and_Xasoc_m_one_two_already_known}\ref{rem:optimal_target_general_theorem_simplified_M_bounded_on_both_X_and_Xasoc_m_one_two_already_known_m_1_2_item}), we have
\begin{equation}\label{thm:examples_LZ_p=n/m_not_supercritical_norm_asoc_with_Rm}
\|g\|_{Y_{m, X}'(0, \infty)} = \big\| t^{1 - \frac{m}{n} - \frac1{q'}} \ell^{-\A}(t) (R_m g^*)^*(t) \big\|_{L^{q'}(0, \infty)}
\end{equation}
for every $g\in\Mpl(0, \infty)$. Now, if $q = 1$ (and so $q' = \infty$), using the fact that the function $(0, \infty)\ni t \mapsto t^{1 - \frac{m}{n}} \ell^{-\A}(t) $ is equivalent to a nondecreasing function, we have
\begin{align}
\big\| t^{1 - \frac{m}{n} - \frac1{q'}} \ell^{-\A}(t) (R_m g^*)^*(t) \big\|_{L^{q'}(0, \infty)} &\leq \sup_{t\in(0, \infty)} t^{1 - \frac{m}{n}} \ell^{-\A}(t) \sup_{\tau\in[t, \infty)} \tau \phi_m(\tau)  g^{**}(\tau) \notag\\
&= \sup_{\tau\in(0, \infty)} \tau \phi_m(\tau)  g^{**}(\tau) \sup_{t\in(0, \tau]} t^{1 - \frac{m}{n}} \ell^{-\A}(t) \notag\\
&\approx \sup_{\tau\in(0, \infty)} \tau^{1 - \frac{m}{n}} \ell^{-\A}(\tau) \tau \phi_m(\tau)  g^{**}(\tau) \label{thm:examples_LZ_rid_of_star_q=1}
\end{align}
for every $g\in\Mpl(0, \infty)$. When $q\in (1, \infty]$ (and so $q'\in[1, \infty)$), we have
\begin{align}
\big\| t^{1 - \frac{m}{n} - \frac1{q'}} \ell^{-\A}(t) (R_m g^*)^*(t) \big\|_{L^{q'}(0, \infty)} &\leq \big\| t^{1 - \frac{m}{n} - \frac1{q'}} \ell^{-\A}(t) \sup_{\tau\in[t, \infty)} \tau \phi_m(\tau)  g^{**}(\tau) \big\|_{L^{q'}(0, \infty)} \notag\\
&\lesssim \big\| t^{1 - \frac{m}{n} - \frac1{q'}} \ell^{-\A}(t) t \phi_m(t)  g^{**}(t) \big\|_{L^{q'}(0, \infty)} \label{thm:examples_LZ_rid_of_star_q>1}
\end{align}
for every $g\in\Mpl(0, \infty)$, where we used \cite[Theorem~3.2]{GOP:06} in the second inequality. Therefore, thanks to \eqref{thm:examples_LZ_p=n/m_not_supercritical_norm_asoc_with_Rm} combined with \eqref{thm:examples_LZ_rid_of_star_q=1} or \eqref{thm:examples_LZ_rid_of_star_q>1}, we have
\begin{equation}\label{thm:examples_LZ_p=n/m_not_supercritical_norm_asoc_with_Rm_upper}
\|g\|_{Y_{m, X}'(0, \infty)} \lesssim \big\| t^{1 - \frac{m}{n} - \frac1{q'}} \ell^{-\A}(t) t \phi_m(t) g^{**}(t) \big\|_{L^{q'}(0, \infty)}
\end{equation}
for every $g\in\Mpl(0, \infty)$. As for the opposite inequality, note that the function $(0, \infty)\ni t \mapsto t \phi_m(t) = \min\{t^{\frac{m}{n}}, 1\}$ is quasiconcave\textemdash a function $\varphi\colon(0, \infty) \to (0, \infty)$ is quasiconcave if it is nondecreasing and the function $(0, \infty)\ni t\mapsto \varphi(t)/t$ is nonincreasing. Hence, in virtue of \cite[Lemma~4.10]{EMMP:20}, we have
\begin{equation*}
\big\| \sup_{\tau\in[t, \infty)} \tau \phi_m(\tau) g^{**}(\tau) \big\|_{X'(0, \infty)} \leq 6 \|t\phi_m(t)g^{**}(t)\|_{X'(0, \infty)}
\end{equation*}
for every $g\in\Mpl(0, \infty)$. Using that, we have
\begin{align}
\big\| t^{1 - \frac{m}{n} - \frac1{q'}} \ell^{-\A}(t) t \phi_m(t) g^{**}(t) \big\|_{L^{q'}(0, \infty)} &\leq \big\| t^{1 - \frac{m}{n} - \frac1{q'}} \ell^{-\A}(t) \sup_{\tau\in[t, \infty)} \tau \phi_m(\tau) g^{**}(\tau) \big\|_{L^{q'}(0, \infty)} \notag\\
&= \|\sup_{\tau\in[t, \infty)} \tau \phi_m(\tau) g^{**}(\tau)\|_{L^{p', q'; -\A}(0, \infty)} \notag\\
&\lesssim \|t \phi_m(t) g^{**}(t)\|_{L^{p', q'; -\A}(0, \infty)} \notag\\
&= \|g\|_{Y_{m, X}'(0, \infty)} \label{thm:examples_LZ_p=n/m_not_supercritical_norm_asoc_with_Rm_lower}
\end{align}
for every $g\in\Mpl(0, \infty)$. Hence, by observing that
\begin{equation*}
\ell^{-\A}(t) t \phi_m(t) \approx \min\{t^{m/n} \ell^{-\alpha_0}(t), \ell^{-\alpha_\infty}(t)\} \quad \text{for every $t\in(0, \infty)$}
\end{equation*}
and by combining \eqref{thm:examples_LZ_p=n/m_not_supercritical_norm_asoc_with_Rm_upper} and \eqref{thm:examples_LZ_p=n/m_not_supercritical_norm_asoc_with_Rm_lower}, we obtain \eqref{thm:examples_LZ_p=n/m_not_supercritical_norm_asoc}. Now, when $q = 1$ (and so $q'=\infty$), it follows from \eqref{thm:examples_LZ_p=n/m_not_supercritical_norm_asoc} combined with \cite[Section~3]{S:72}
that
\begin{equation*}
\|f\|_{Y_{m, X}(0, \infty)} \approx \|f\|_{\Lambda^1_{v_2}(0, \infty)} \quad \text{for every $f\in\Mpl(0, \infty)$}.
\end{equation*}
When $q \in (1, \infty]$ (and so $q'\in[1, \infty)$), it follows from \eqref{thm:examples_LZ_p=n/m_not_supercritical_norm_asoc} combined with \cite[Theorem~6.2]{GP:03} that
\begin{equation*}
\|f\|_{Y_{m, X}(0, \infty)} \approx \|f^{**}\|_{\Lambda^q_{v_j}(0, \infty)} \quad \text{for every $f\in\Mpl(0, \infty)$},
\end{equation*}
where $j = 2$ if $\alpha_0 < 1/q'$; $j = 3$ if $\alpha_0 = 1/q'$. Moreover, in view of \eqref{prel:ri:twostarsdominateonestar} and weighted Hardy's inequality \cite[Theorem~1]{M:72}, we have
\begin{equation*}
\|f^{**}\|_{\Lambda^q_{v_j}(0, \infty)} \approx \|f\|_{\Lambda^q_{v_j}(0, \infty)} \quad \text{for every $f\in\Mpl(0, \infty)$}.
\end{equation*}

Next, assume that $p = q = \infty$, $\alpha_0\leq 0$, and $\alpha_\infty > \lceil m/2 \rceil$. We start with two observations. First, when $\widetilde{X}(0, \infty) = L^{\infty,\infty; [\beta_0, \beta_\infty]}(0, \infty)$ is a Lorentz--Zygmund space (that is, $\beta_0\leq 0$), then the condition \eqref{thm:optimal_target_general_theorem_condition} with $m \in \{1, 2\}$ is satisfied if and only if $\beta_\infty > 1$. Second, thanks to \cref{thm:optimal_target_general_theorem} and \eqref{prel:ri:X''=X}, we see that
\begin{equation*}
Y_{m, \tilde{X}}(0, \infty) = \begin{cases}
Y_{m-2, Y_{2, \tilde{X}}}(0, \infty) \quad &\text{if $m\geq4$ is even},\\
Y_{m-1, Y_{1, \tilde{X}}}(0, \infty) \quad &\text{if $m\geq3$ is odd}.
\end{cases} 
\end{equation*}
In view of these two observations, it will follow that
\begin{equation}\label{thm:examples_LZ_p=q=infty}
Y_{m, L^{\infty, \infty; [\alpha_0, \alpha_\infty]}}(0, \infty) = L^{\infty, \infty; [0, \alpha_\infty - \lceil m/2 \rceil]}(0, \infty)
\end{equation}
once we show that
\begin{equation}\label{thm:examples_LZ_p=q=infty_m_1_or_2}
Y_{j, L^{\infty, \infty; [\beta_0, \beta_\infty]}}(0, \infty) = L^{\infty, \infty; [0, \beta_\infty - 1]}(0, \infty)
\end{equation}
for $j\in\{1, 2\}$, $\beta_0\leq0$, and $\beta_\infty > 1$. To that end, thanks to \cref{thm:optimal_target_general_theorem_simplified_M_bounded_X}, we have
\begin{equation*}
\|g\|_{Y_{j, L^{\infty, \infty; [\beta_0, \beta_\infty]}}'(0, \infty)} = \big\| \ell^{[-\beta_0, -\beta_\infty]}(t)(R_jg^*)^*(t) \big\|_{L^1(0, \infty)}
\end{equation*}
for every $g\in\Mpl(0, \infty)$. Arguing as in the previous case, we obtain that
\begin{align*}
\big\| \ell^{[-\beta_0, -\beta_\infty]}(t)(R_jg^*)^*(t) \big\|_{L^1(0, \infty)} &\approx \|t\phi_j(t) \ell^{[-\beta_0, -\beta_\infty]}(t) g^{**}(t)\|_{L^1(0, \infty)} \\
&\approx \|\min\{t^{\frac{j}{n}} \ell^{-\beta_0}(t), \ell^{-\beta_\infty}(t)\} g^{**}(t)\|_{L^1(0, \infty)}
\end{align*}
for every $g\in\Mpl(0, \infty)$. Combining that, \cite[Theorem~6.2]{GP:03}, and \cite[Theorem~3.8]{OP:99}, we obtain \eqref{thm:examples_LZ_p=q=infty_m_1_or_2}. Now that we have established \eqref{thm:examples_LZ_p=q=infty}, it only remains to observe that
\begin{align*}
\|f\|_{L^{\infty, \infty; [0, \alpha_\infty - k - 1]}(0, \infty)} &\approx \|f^{**}\|_{L^{\infty, \infty; [0, \alpha_\infty - k - 1]}(0, \infty)} \\
&\approx \|f\|_{L^\infty(0, \infty)}  + \sup_{t\in[1, \infty)}\ell^{\alpha_\infty - k - 1}(t)f^{**}(t) \\
&= \|f\|_{Z_3(0, \infty)}
\end{align*}
for every $f\in(0, \infty)$; as for the first equivalence, see \cite[Theorem~3.8]{OP:99}.

Next, assume that $m$ is even and that $p=q=1$ and $\alpha_0 = \alpha_\infty = 0$; that is, $X(0, \infty) = L^1(0, \infty)$.
Using \cref{thm:optimal_target_general_theorem_simplified_M_bounded_Xasoc} and \eqref{thm:optimal_target_eucl_and_supercritical_optimality_Sm_in_Linfty}, we obtain
\begin{align*}
\|g\|_{Y_{m, L^1}'(0, \infty)} &= \|S_m(g^{**})\|_{L^\infty(0, \infty)} \\
&\approx  \int_0^\infty g^{**}(t) \phi_m(t) \big(\chi_{(0,1)}(t) + \log(et)^k\chi_{(1, \infty)}(t)\big) \d{t} \\
&\approx \int_0^\infty g^{**}(t) \min\{t^{-1 + \frac{m}{n}}, t^{-1}\ell^k(t)\} \d{t}
\end{align*}
for every $g \in \Mpl(0, \infty)$.
Moreover, it follows from \eqref{prel:ri:twostarsdominateonestar} and weighted Hardy's inequality \cite[Theorem~1]{M:72} that
\begin{equation*}
\|g\|_{Y_{m, L^1}'(0, \infty)} \approx \int_0^\infty g^*(t) \min\{t^{-1 + \frac{m}{n}}, t^{-1}\ell^k(t)\} \d{t} \quad \text{for every $g\in\Mpl(0, \infty)$}.
\end{equation*}
Combining this with \cite[Section~3]{S:72}, we obtain
\begin{equation*}
\|f\|_{Y_{m, L^1}(0, \infty)} \approx \sup_{t\in(0, \infty)} \max\{t^{1 - \frac{m}{n}}, t \ell^{-k - 1}(t)\} f^{**}(t) \approx \|f\|_{Z_2(0, \infty)}
\end{equation*}
for every $f\in\Mpl(0, \infty)$.

Lastly, assume that $m\geq3$ (in particular, $k\geq 1$) is odd and $X(0, \infty) = L^1(0, \infty)$. Owing to \cref{thm:optimal_target_general_theorem_simplified_M_bounded_Xasoc}, we have
\begin{equation*}
\|g\|_{Y_{m, L^1}'(0, \infty)} = \|R_1(H_2^k(g^{**}))\|_{L^\infty(0, \infty)} \quad \text{for every $g\in\Mpl(0, \infty)$}.
\end{equation*}
Arguing as in the third case, we obtain
\begin{equation*}
\|R_1(H_2^k(g^{**}))\|_{L^\infty(0, \infty)} \approx \|t\phi_1(t)(H_2^k(g^{**}))^{**}(t)\|_{L^\infty(0, \infty)}
\end{equation*}
for every $g\in\Mpl(0, \infty)$. Moreover, thanks to \eqref{prel:ri:twostarsdominateonestar} and \cite[Theorem~1]{M:72}, we also obtain
\begin{equation*}
\|t\phi_1(t)(H_2^k(g^{**}))^{**}(t)\|_{L^\infty(0, \infty)} \approx \|t\phi_1(t)H_2^k(g^{**})(t)\|_{L^\infty(0, \infty)}
\end{equation*}
for every $g\in\Mpl(0, \infty)$. Therefore, we have
\begin{equation*}
\|g\|_{Y_{m, L^1}'(0, \infty)} \approx \|\min\{t^{\frac1{n}}, 1\}H_2^k(g^{**})(t)\|_{L^\infty(0, \infty)} \quad \text{for every $g\in\Mpl(0, \infty)$}.
\end{equation*}
Using this and \eqref{prop:properties_of_iterated_operators_Hjk_Hi}, we arrive at
\begin{equation}\label{thm:examples_LZ_p=q=1_m_odd_gtr3_asoc_norm_explicit_with_kernel}
\|g\|_{Y_{m, L^1}'(0, \infty)} \approx \sup_{t\in(0, \infty)}\min\{t^{\frac1{n}}, 1\} \int_t^\infty g^{**}(s) \phi_2(s) \Big( \int_t^s \phi_2 \Big)^{k-1} \d{s}
\end{equation}
for every $g\in\Mpl(0, \infty)$. To simplify the notation in the rest of the proof, set
\begin{equation*}
\psi(t) = \min\{t^{-1 + \frac{2k}{n}}, t^{-1}\ell^{k-1}(t)\},\ t\in(0, \infty).
\end{equation*}
Note that $2k = m-1$. We claim that
\begin{equation}\label{thm:examples_LZ_p=q=1_m_odd_gtr3_asoc_norm_explicit_copson}
\|g\|_{Y_{m, L^1}'(0, \infty)} \approx \sup_{t\in(0, 1)}t^{\frac1{n}} \int_t^\infty g^{**}(s) \psi(s) \d{s} + \int_1^\infty g^{**}(s) \psi(s) \d{s}
\end{equation}
for every $g\in\Mpl(0, \infty)$. To that end, in view of \eqref{thm:examples_LZ_p=q=1_m_odd_gtr3_asoc_norm_explicit_with_kernel} and \eqref{prop:properties_of_iterated_operators_kernel_phi_alpha_explicit}, we clearly have
\begin{equation}
\|g\|_{Y_{m, L^1}'(0, \infty)} \lesssim \sup_{t\in(0, 1)}t^{\frac1{n}} \int_t^\infty g^{**}(s) \psi(s) \d{s} + \int_1^\infty g^{**}(s) \psi(s) \d{s} \label{thm:examples_LZ_p=q=1_m_odd_gtr3_asoc_norm_explicit_copson_upper}
\end{equation}
for every $g\in\Mpl(0, \infty)$. As for the opposite inequality, we observe that, for every $t\in(0, 1)$ and $g\in\Mpl(0, \infty)$,
\begin{align*}
\int_t^\infty g^{**}(s) \phi_2(s) \Big( \int_t^s \phi_2 \Big)^{k-1} \d{s} &\gtrsim \int_{2t}^2 g^{**}(s) s^{-1 + \frac{2}{n}} \Big( \int_{\frac{s}{2}}^s \tau^{-1 + \frac{2}{n}} \d{\tau} \Big)^{k-1} \d{s} \\
&\quad+ \int_2^\infty g^{**}(s) s^{-1} \Big( \int_{\sqrt{s}}^s \tau^{-1} \d{\tau} \Big)^{k-1} \d{s} \\
&\approx \int_{2t}^\infty g^{**}(s) \psi(s) \d{s}.
\end{align*}
Using that and \eqref{thm:examples_LZ_p=q=1_m_odd_gtr3_asoc_norm_explicit_with_kernel}, we have
\begin{align}
\|g\|_{Y_{m, L^1}'(0, \infty)} &\gtrsim \sup_{t\in(0, 2)} t^{\frac1{n}}\int_t^\infty g^{**}(s) \psi(s) \d{s} \notag\\
&\gtrsim \sup_{t\in(0, 1)} t^{\frac1{n}}\int_t^\infty g^{**}(s) \psi(s) \d{s} + \int_1^\infty g^{**}(s) \psi(s) \d{s} \label{thm:examples_LZ_p=q=1_m_odd_gtr3_asoc_norm_explicit_copson_lower}
\end{align}
for every $g\in\Mpl(0, \infty)$. Hence we obtain \eqref{thm:examples_LZ_p=q=1_m_odd_gtr3_asoc_norm_explicit_copson} by combining \eqref{thm:examples_LZ_p=q=1_m_odd_gtr3_asoc_norm_explicit_copson_upper} and \eqref{thm:examples_LZ_p=q=1_m_odd_gtr3_asoc_norm_explicit_copson_lower}. Now, we claim that
\begin{align}
&\sup_{t\in(0, 1)}t^{\frac1{n}} \int_t^\infty g^{**}(s) \psi(s) \d{s} + \int_1^\infty g^{**}(s) \psi(s) \d{s} \notag\\
&\approx \sup_{t\in(0, 1)}t^{-1 + \frac{m}{n}} g^{**}(s) + \int_1^\infty g^{**}(s) \psi(s) \d{s} \label{thm:examples_LZ_p=q=1_m_odd_gtr3_asoc_norm_explicit_with_two_stars}
\end{align}
for every $g\in\Mpl(0, \infty)$. On the one hand, for every $t\in(0, 1)$ and $g\in\Mpl(0, \infty)$, we have
\begin{align}
t^{\frac1{n}} \int_t^\infty g^{**}(s) \psi(s) \d{s} &\geq t^{\frac1{n}} \int_t^{2t} g^{**}(s) \psi(s) \d{s} \approx t^{\frac1{n}} \int_t^{2t} g^{**}(s) s^{-1 + \frac{2k}{n}} \d{s} \notag\\
&\geq t^{\frac1{n}} g^{**}(2t) \int_t^{2t} s^{-1 + \frac{2k}{n}} \d{s} \approx t^{\frac1{n}} g^{**}(2t) t^{\frac{2k}{n}} \geq \frac1{2}t^{\frac{m}{n}}g^{**}(t) \label{thm:examples_LZ_p=q=1_m_odd_gtr3_asoc_norm_explicit_with_two_stars_lower}.
\end{align}
On the other hand, thanks to \cite[Theorem~2]{M:72}, we have
\begin{equation*}
\sup_{t\in(0,1)}t^{\frac1{n}}\int_t^1 h(s)\d{s} \lesssim \sup_{t\in(0, 1)} t^{\frac{m}{n} + 1 - \frac{2k}{n}} h(t) \quad \text{for every $h\in\Mpl(0, \infty)$}.
\end{equation*}
It follows that
\begin{align}
\sup_{t\in(0, 1)}t^{\frac1{n}} \int_t^\infty g^{**}(s) \psi(s) \d{s} &\lesssim \sup_{t\in(0, 1)} t^{\frac{m}{n} + 1 - \frac{2k}{n}} g^{**}(t) \psi(t) + \int_1^\infty g^{**}(s) \psi(s) \d{s} \notag\\
&\approx \sup_{t\in(0, 1)} t^{\frac{m}{n}} g^{**}(t) + \int_1^\infty g^{**}(s) \psi(s) \d{s} \label{thm:examples_LZ_p=q=1_m_odd_gtr3_asoc_norm_explicit_with_two_stars_upper}
\end{align}
for every $g\in\Mpl(0, \infty)$. Hence \eqref{thm:examples_LZ_p=q=1_m_odd_gtr3_asoc_norm_explicit_with_two_stars} follows from \eqref{thm:examples_LZ_p=q=1_m_odd_gtr3_asoc_norm_explicit_with_two_stars_lower} and \eqref{thm:examples_LZ_p=q=1_m_odd_gtr3_asoc_norm_explicit_with_two_stars_upper}. Finally, we claim that
\begin{equation}\label{thm:examples_LZ_p=q=1_m_odd_gtr3_asoc_norm_explicit_with_one_star}
\sup_{t\in(0, 1)} t^{\frac{m}{n}} g^{**}(t) + \int_1^\infty g^{**}(s) \psi(s) \d{s} \approx \sup_{t\in(0, 1)} t^{\frac{m}{n}} g^*(t) + \int_1^\infty g^*(s) \psi(s) \d{s}
\end{equation}
for every $g\in\Mpl(0, \infty)$. To that end, owing to weighted Hardy's inequalities \cite[Theorems~1~and~2]{M:72} again, we have
\begin{align}
\sup_{t\in(0, 1)} t^{\frac{m}{n}} g^{**}(t) &\lesssim \sup_{t\in(0, 1)} t^{\frac{m}{n}} g^*(t) \label{thm:examples_LZ_p=q=1_m_odd_gtr3_asoc_norm_explicit_with_one_star_upper_sup}\\
\intertext{and}
\int_1^\infty s^{-2}\ell^{k-1}(s) \Big( \int_1^s g^* \Big) \d{s} &\lesssim \int_1^\infty s^{-1}\ell^{k-1}(s) g^*(s) \d{s} \notag
\end{align}
for every $g\in\Mpl(0, \infty)$. Using the latter, we have
\begin{align}
\int_1^\infty g^{**}(s) \psi(s) \d{s} &= \Big(\int_0^1 g^*\Big)\int_1^\infty s^{-2}\ell^{k-1}(s) \d{s} + \int_1^\infty s^{-2}\ell^{k-1}(s) \Big( \int_1^s g^* \Big) \d{s} \notag\\
&\lesssim \sup_{t\in(0, 1)} t^{\frac{m}{n}} g^*(t) \Big(\int_0^1 s^{-\frac{m}{n}}\d{s}\Big) + \int_1^\infty s^{-1}\ell^{k-1}(s) g^*(s) \d{s} \notag\\
&\approx \sup_{t\in(0, 1)} t^{\frac{m}{n}} g^*(t) + \int_1^\infty g^*(s) \psi(s) \d{s} \label{thm:examples_LZ_p=q=1_m_odd_gtr3_asoc_norm_explicit_with_one_star_upper_intergral}
\end{align}
for every $g\in\Mpl(0, \infty)$. Hence \eqref{thm:examples_LZ_p=q=1_m_odd_gtr3_asoc_norm_explicit_with_one_star} follows from \eqref{thm:examples_LZ_p=q=1_m_odd_gtr3_asoc_norm_explicit_with_one_star_upper_sup} and \eqref{thm:examples_LZ_p=q=1_m_odd_gtr3_asoc_norm_explicit_with_one_star_upper_intergral} combined with \eqref{prel:ri:twostarsdominateonestar}. Now, by combining \eqref{thm:examples_LZ_p=q=1_m_odd_gtr3_asoc_norm_explicit_copson}, \eqref{thm:examples_LZ_p=q=1_m_odd_gtr3_asoc_norm_explicit_with_two_stars}, and \eqref{thm:examples_LZ_p=q=1_m_odd_gtr3_asoc_norm_explicit_with_one_star}, we obtain
\begin{equation*}
\|g\|_{Y_{m, L^1}'(0, \infty)} \approx \sup_{t\in(0, 1)} t^{\frac{m}{n}} g^*(t) + \int_1^\infty s^{-1}\ell^{k-1}(s) g^*(s) \d{s}
\end{equation*}
for every $g\in\Mpl(0, \infty)$. Moreover, it follows from \cite[Theorem~4.1]{M:79} together with \cite[Section~3]{S:72} that
\begin{equation*}
\|g\|_{(L^{\frac{n}{m}, \infty} + W')(0, \infty)} \approx \sup_{t\in(0, 1)} t^{\frac{m}{n}} g^*(t) + \int_1^\infty s^{-1}\ell^{k-1}(s) g^*(s) \d{s}
\end{equation*}
for every $g\in\Mpl(0, \infty)$, where $W(0, \infty) = L^{(1, \infty; [0, -k])}(0, \infty)$. Therefore, we have proved that
\begin{equation*}
Y_{m, L^1}'(0, \infty) = (L^{\frac{n}{m}, \infty} + W')(0, \infty).
\end{equation*}
Hence, combining that with \eqref{prel:ri:dual_sum_and_inter} and \eqref{prel:ri:X''=X}, we obtain
\begin{equation*}
Y_{m, L^1}(0, \infty) = L^{\frac{n}{n - m}, 1}(0, \infty) \cap L^{(1, \infty; [0, -k])}(0, \infty).
\end{equation*}
At last, all that remains is to observe that
\begin{equation*}
\max\Big\{ \int_0^\infty t^{\frac{n - m}{n} - 1} f^*(t) \d{t}, \sup_{t\in(0, \infty)} t \ell^{[0, -k]}(t)f^{**}(t) \Big\} \approx \|f\|_{Z_1(0, \infty)}
\end{equation*}
for every $f\in\Mpl(0, \infty)$. That follows from the following two inequalities, which are valid for every $f\in\Mpl(0, \infty)$ and which can be readily verified:
\begin{align*}
\int_1^\infty t^{\frac{n - m}{n} - 1} f^*(t) \d{t} &\leq \Big( \int_1^\infty t^{-2 + \frac{n - m}{n}} \ell^k(t)\d{t}\Big) \sup_{t\in[1, \infty)} t\ell^{-k}(t)f^{**}(t) \\
\intertext{and}
\sup_{t\in(0, 1)}t f^{**}(t) &= f^{**}(1) \leq \ell^k(1) \sup_{t\in[1, \infty)} t\ell^{-k}(t)f^{**}(t). \qedhere
\end{align*}
\end{proof}

\section*{Acknowledgment}
The author would like to thank the referee for their very valuable and insightful comments.

\end{document}